\documentclass[a4paper, 12pt]{article} 
\usepackage[margin=2.5cm, top=2.5cm, bottom=2.5cm ]{geometry}	

\usepackage{amsmath}		
\usepackage{amsfonts}		
\usepackage{amsthm}
\usepackage[utf8]{inputenc}
\usepackage{color}
\usepackage{tikz}
\usepackage{verbatim}
\usepackage{slashed}
\usepackage[normalem]{ulem}

\usetikzlibrary{calc}
\usetikzlibrary{arrows}  
\usetikzlibrary{decorations.markings}
\usetikzlibrary{shapes,snakes}
\usepackage{hyperref} 
\hypersetup{
	bookmarks=true,
	unicode=false,
	pdftoolbar=true,
	pdfmenubar=true,
	pdffitwindow=false,
	pdfstartview={FitH},
	pdftitle={My title},
	pdfauthor={Author},
	pdfsubject={Subject},
	pdfcreator={Creator},
	pdfproducer={Producer},
	pdfkeywords={keyword1} {key2} {key3},
	pdfnewwindow=true,
	colorlinks=true,   
	linkcolor={blue!80!black}, 
	citecolor={CrimsonRed}, 
	filecolor={blue!80!black},
	urlcolor={blue},      
	hypertexnames=false, %
} 
\usepackage[figure]{hypcap}
\usepackage{amssymb}		
\usepackage{amstext}		
\usepackage{graphicx}		
\usepackage[utf8]{inputenc}	
\usepackage[T1]{fontenc}	
\usepackage{pdfpages}		
\usepackage{wrapfig}		
\usepackage{here}			
\usepackage{float}			
\usepackage{fancyhdr}
\usepackage{verbatim}		
\usepackage{wasysym}		
\usepackage{tocloft}		
\usepackage{siunitx}		
\usepackage[arrow, matrix, curve]{xy} 
\usepackage{wrapfig,tabularx,multirow,hyperref,icomma}
\usepackage{mathptmx}
\usepackage{lmodern}
\usepackage{courier}
\usepackage{paralist}
\usepackage{afterpage}
\usepackage{subcaption}
\usepackage[section]{placeins}
{\left.\begin{aligned}}
	{\end{aligned}\right\rbrace}
\usepackage{titletoc}
\usepackage{titlesec}
\graphicspath{{figures/}} 
\usepackage{etoolbox}
\usepackage{relsize,exscale}
\usepackage[titletoc]{appendix}
\usepackage[numbers,sort&compress]{natbib}

\usepackage{mwe} 

\usepackage{nameref}
\usepackage{comment}
\definecolor{RoyalBlue}{RGB}{35,32, 57}
\definecolor{lightblue}{RGB}{132,131,143}
\definecolor{CrimsonRed}{RGB}{130,0,29}

\newtheorem{definition}{Definition}[section]

\newtheorem{theorem}[definition]{Theorem}

\newtheorem{proposition}[definition]{Proposition}

\newtheorem{lemma}[definition]{Lemma}

\theoremstyle{definition}
\newtheorem{remark}[definition]{Remark}

\theoremstyle{theorem}

\theoremstyle{definition}

\newcommand{\z}{\boldsymbol{z}}
\newcommand{\x}{\boldsymbol{x}}

\newcommand{\y}{\boldsymbol{y}}

\newcommand{\n}{\boldsymbol{n}}

\newcommand{\ptl}{\partial}

\renewcommand{\S}{\mathcal{S}}

\renewcommand{\c}{\!\mathbb{c}}

\newcommand{\cc}{\boldsymbol{c}}
\newcommand{\dd}{\boldsymbol{d}}

\newcommand{\bdeta}{\boldsymbol{\eta}}
\newcommand{\xNP}{\x_{\textsc{np}}}
\usepackage{stmaryrd} 
\newcommand{\class}[1]{\left\llbracket #1 \right\rrbracket}
\newcommand{\CP}{\mathbb{C}\mathbb{P}}

\newcommand\numberthis{\addtocounter{equation}{1}\tag{\theequation}}
\numberwithin{equation}{section}

\makeatletter
\patchcmd{\chapter}{\if@openright\cleardoublepage\else\clearpage\fi}{}{}{}
\makeatother

\titlespacing*{\chapter}
{0cm}{1cm}{1cm}
{} \par

\titlespacing*{\section}
{0pt}{1cm}{1cm}
\titlespacing*{\subsection}
{0pt}{5.5ex plus 1ex minus .2ex}{4.3ex plus .2ex}

\usepackage{graphicx}
\usepackage{calc}
\usepackage{tocloft}
\usepackage[nottoc,notlof]{tocbibind}
\usepackage{emptypage}
\usepackage{hyperref}
\usepackage{bashful}

\renewcommand*{\cftfigname}{\figurename\space}

\usepackage{chngcntr}

\titleformat{\paragraph}
{\normalfont\normalsize\bfseries}{\theparagraph}{1em}{}
\titlespacing*{\paragraph}
{0pt}{3.25ex plus 1ex minus .2ex}{1.5ex plus .2ex}

\newcommand{\RCA}[1]{{\color{blue}{#1}}}

\usepackage[scr=boondoxo,bb=libus]{mathalpha} 

\newcommand{\VK}[1]{\textcolor{teal}{#1}}

\newcommand{\SO}{\mathrm{SO}}
\newcommand{\G}{\mathscr{G}}

\usepackage{authblk}

\title{Plane wave representations\\ for wave scattering on a sphere.}

\author[1]{Valentin D. Kunz}
\affil[1]{
{\footnotesize Department of Mathematics, Ohio State University, 231 W 18th Ave, Columbus, OH 43210, USA}}
\author[2]{Andrey V. Shanin}
\affil[2]{\footnotesize Department of Physics (Acoustics Division), Moscow State University, Leninskie Gory,
119992 Moscow, Russia}
\author[3]{Rapha\"el C. Assier}
\affil[3]{{\footnotesize Department of Mathematics, The University of Manchester, 
Oxford Road, Manchester, M13 9PL, UK}}
\date{\today}

\begin{document}
\maketitle

\begin{abstract}
Motivated by three-dimensional wave diffraction by conical shapes, we consider the two-dimensional problem of wave scattering on the real sphere $\S^2$ resulting from a point-source incidence on a scatterer $\mathbb{S}$. Using the recently introduced notion of plane waves on $\S^2$ as well as recent results on the Laplace--Beltrami Green's function $G$ on $\S^2$, we derive three plane-wave representations for the scattered field $u$. We then explain how $u$ can be analytically continued to the complexified sphere $\S^2_{\c}$. This allows us to formulate a complexified version of Green's third identity, and to provide a characterisation of complex sources. All our results are then used to draw strong analogies between planar and spherical scattering problems, notably through the introduction of directivities.
\end{abstract}

\section{Introduction}
This article is dedicated to the study of wave fields propagating on the surface of the real unit sphere $\S^2$. More precisely we are interested in the field resulting from a point source forcing on $\S^2$ and subject to Dirichlet boundary condition on some scatterer $\mathbb{S}$. The solution $g$ to this problem, which we will refer to as a \textit{tailored spherical Green's function} is important, as it arises frequently in physical contexts. It is particularly relevant for diffraction theory, where such tailored Green's function forms an integral part of formulae giving the diffraction coefficients to specific three-dimensional problems such as the quarter-plane problem or the problem of diffraction by cones \citep{Smyshlyaev1993,shanin2005modified,AssierPeake2012,sarigiannidis2026integral}. 

An important difference between wave fields propagating on a curved surface compared to those propagating on a plane (more traditionally considered), is that, in the associated Helmholtz equation, one should replace the Laplace operator by the Laplace--Beltrami operator. Moreover, the wavenumber of the Helmholtz equation should in this case be chosen such that it differs from a certain set of eigenvalues of the Laplace--Beltrami operator. A careful study of these eigenvalues can, for instance, be found in \citep{AssierPoonPeake2016}. Another important difference is that the space $\S^2$ is compact, while the plane is not.

Our first aim with this article is to reduce the computation of such tailored Green's function to that of a finite set of \textit{directivities}, which are functions depending solely on one complex variable. In order to achieve this, we rely on the plane wave representation formulae for the ``free-space'' spherical Green's function $G$ derived  in \citep{partAcomplexSphere}, and on Green's third identity on $\S^2$. This results in three distinct plane-wave representations for the scattered field $u=g-G$, expressed in terms of the directivities.

Our second aim is to draw analogies between wave scattering on the plane, and wave scattering on $\S^2$. This will be done by introducing a notion of ``far-field'' on $\S^2$ through analytic continuation of the wave fields from $\S^2$ (real dimension 2) to the complexified sphere $\S_{\c}^2$ (real dimension 4). In particular, as for the planar case, the directivities will be shown to be linked to the far-field behaviour of the scattered field.

The remainder of the article is organised as follows. In Section \ref{sec:pbformulationsphere}, we formulate the boundary value problem and Green's third identity on $\S^2$. In the motivational Section \ref{sec:motivation}, we summarise important facts and concepts of scattering problems in the plane, including the classic concepts of directivities and plane-wave representations.
Section \ref{sec:FreeSpaceGreen} is focused on plane-wave representations of the free-space Green's function $G$ on $\S^2$. It provides a brief summary of the results of \citep{partAcomplexSphere}, including the definition of plane waves on $\S^2$ in Section \ref{sec:planewavesdef}, a discussion of their singular behaviour in Section \ref{sec:singplanewaves}, and three plane-wave representations formulae in Section \ref{sec:planewaverepG}. Of the latter, two were derived in \citep{partAcomplexSphere}, and the third is new to this work (and proved in Appendix \ref{App:New01}). Section \ref{sec:PWNew02} is dedicated to the derivation of plane-wave representation formulae for the scattered field $u$ and to the introduction of the directivities. In Section \ref{sec:analytical-continuation}, using the usual Green's third identity and ideas of multidimensional complex analysis developed in \citep{AssierShanin2021AnalyticalCont}, we detail the procedure of analytical continuation of the scattered field $u$ onto the complexified sphere $\S^2_{\c}$. Moreover, we will prove a complexified version of Green's third identity, and introduce the notion of a \textit{complexified source}.  In Section \ref{sec:farfield}, we study how $u$ behaves as its argument tends to infinity from within $\S_{\c}^2$, and, making use of known asymptotics of the Legendre function, we show that the directivities are linked to the behaviour of $u$ at infinity. Finally, in Section \ref{sec:analogies} we will draw multiple analogies between the spherical and the planar case. We end the article with some conclusions and perspectives.

\section{Problem formulation} \label{sec:pbformulationsphere}

Let us denote by $\S^2=\{\x=(x_1,x_2,x_3) \in \mathbb{R}^3: x_1^2+x_2^2+x_3^2=1\}$ the real 2-sphere, and by~$\mathbb{S}$ an arbitrary scatterer with piecewise smooth boundary $\ptl \mathbb{S}$ on the sphere. The only restriction that we impose on the scatterer is for it to be small in some way. More precisely, we require that the scatterer is fully contained within a spherical cap $\mathcal{C}$. The cap $\mathcal{C}$ is centred at the North pole $\xNP=(0,0,1)$, and strictly smaller than the Northern hemisphere. This is illustrated in Figure \ref{fig:formulation-fig}~(left).

\begin{figure}[h!]
    \centering
    \includegraphics[width=\linewidth]{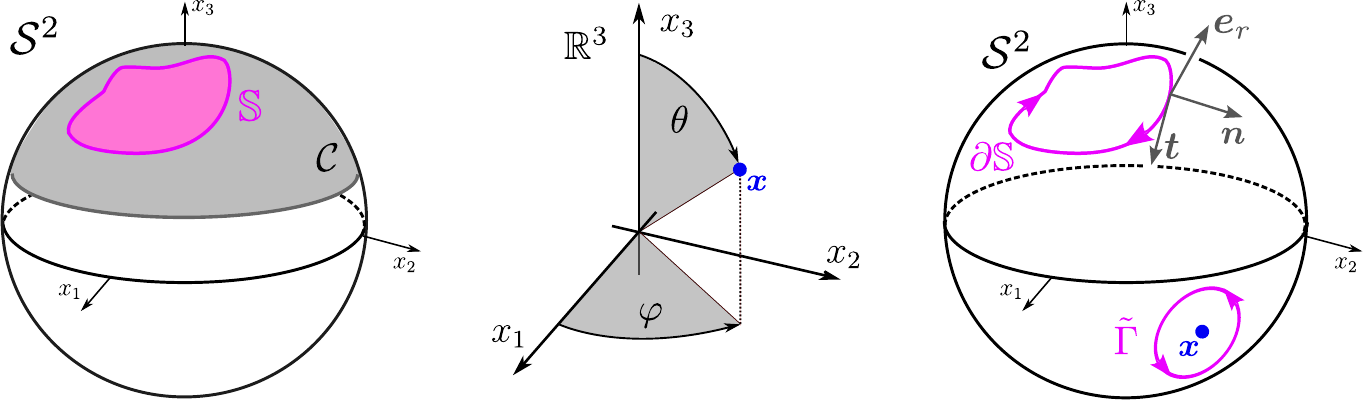}
    \caption{\small The sphere $\S^2$ and the scatterer $\mathbb{S}$ contained within the spherical cap $\mathcal{C}$ (left), the spherical coordinate system on $\S^2$ (middle), as well as two contours $\partial \mathbb{S}$ and $\tilde{\Gamma}$ which may be taken as the loop $\Gamma$ within the formulation of Green's third identity on $\mathcal{S}^2$ (right). On the right, we also see the normal vector $\n = \boldsymbol{t} \times \boldsymbol{e}_r$.}
    \label{fig:formulation-fig}
\end{figure}

Our aim is to find the function $g(\x;\x_0)$ that solves the following boundary value problem on $\S^2$.
It should obey the equation 
\begin{align}
	(\tilde{\Delta} + \lambda(\lambda+1))g= \delta(\x -\x_0), \text{ for } \x\in\S^2 \setminus \mathbb{S} \label{eq.Ch02-LB01}
\end{align}
where 
$\tilde{\Delta}$ denotes the Laplace-Beltrami operator (LBO) on $\S^2$ with respect to $\x$. The function $\delta$ is the 2D Dirac delta function on $\S^2$, the point $\x_0\in\S^2$ is a point source which can be located anywhere outside the scatterer, and $\lambda\in \mathbb{C}$ is a parameter subject to some mild restrictions to be specified later in Remark \ref{Rem.Ch02--ValueLambda}. Furthermore, we assume that $g$ is subject to Dirichlet boundary conditions on the scatterer $\mathbb{S}$:
\begin{align}
	g(\x;\x_0) =0 \text{  for  } \x\in \ptl \mathbb{S}. \label{eq.Ch02-LB03}
\end{align}
For the problem to be well-posed, one needs to ensure that Meixner condition is satisfied in the vicinity of the non-smooth points of the boundary, this condition, which can be written in integral form, effectively ensures that energy remains finite in the vicinity of these points.

The function $g$ can be thought of as the tailored Laplace--Beltrami Green's function associated with $\mathbb{S}$, or equivalently, as the \textit{total field} of the problem. It is thus natural to introduce the ``free-space'' Laplace--Beltrami Green's function $G(\x;\x_0)$, satisfying\begin{align}
	\left( 
            \tilde{\Delta} + \lambda(\lambda+1)
    \right)G = \delta(\x-\x_0), \text{  for  } \x\in\S^2, 
    \label{eq.Ch02--GreensFunctionGPDE}
\end{align}
to represent the \textit{incident field} of the problem. This function is well-known and given for $\lambda\notin\mathbb{Z}$ by
\begin{align}
	G(\x;\x_0) = \frac{1}{4 \sin(\pi \lambda)} P_{\lambda}(-\x\cdot \x_0). 
    \label{eq.Ch02--Def.GreensFunctionG}
\end{align}
Here $P_\lambda$ is the Legendre function of the first kind and of degree $\lambda$. We can now introduce the \textit{scattered field} of the problem, denoted $u(\x;\x_0)$ or simply $u(\x)$ if there is no ambiguity on the source location, and defined such that 
\begin{align}
g(\x;\x_0)=G(\x;\x_0)+u(\x;\x_0).
\end{align}
As a result, $u$ satisfies the homogeneous Laplace--Beltrami equation (LBE)
\begin{align}
	\left( 
            \tilde{\Delta} + \lambda(\lambda+1)
    \right)u(\x) = 0, \qquad \mbox{for} \quad \x \in \S^2 \setminus \mathbb{S}, 
    \label{e:HLB}
\end{align}
subject to the inhomogeneous Dirichlet boundary condition
\begin{equation}
u(\x) = - G(\x; \x_0) , \qquad \mbox{for} \quad \x \in \ptl \mathbb{S}. 
\label{e:ID}
\end{equation}

Upon introducing the usual spherical coordinates $(\theta, \varphi)\in[0,\pi]\times[0,2\pi)$, illustrated in Figure \ref{fig:formulation-fig} (right), and defined such that for $\x=(x_1,x_2,x_3)\in\S^2$ we have
\begin{align}
	x_1=\sin \theta \cos \varphi, \quad x_2=\sin \theta \sin \varphi, \quad x_3 = \cos \theta,
    \label{Ch02.eq.01}
\end{align}
it is possible to write the LBO explicitly as
\begin{align}
	\tilde{\Delta} = \partial^2_{\theta} + \frac{\cos \theta}{\sin \theta}\partial_{\theta} +
    \frac{1}{\sin^2 \theta }\partial^2_{\varphi} \, \cdot \label{Ch02.eq.002}
\end{align}

With this definition, the LBO might appear singular when $\sin(\theta)=0$, but, as discussed at length in for example \citep{partAcomplexSphere}, it is only a coordinate singularity and not a singularity of the operator. The LBO can be perfectly defined at the North and South poles through some simple rotation of the coordinate system.

Next, let us formulate \textit{Green's third identity} for the wave-field $u$, which will be one of the most important equations used throughout the remainder of this article. In order to formulate it,
consider a point $\x\in\S^2\setminus \mathbb{S}$ and a piecewise smooth, simple loop $\tilde \Gamma$ on $\S^2 \setminus \mathbb{S}$ that encircles $\x$ exactly once in the counterclockwise direction. For example, we may choose $\tilde \Gamma$ as either the oriented boundary $\partial \mathbb{S}$ of $\mathbb{S}$ shown in  Figure \ref{fig:formulation-fig}, right, or more arbitrarily, as also shown in Figure \ref{fig:formulation-fig}, right. Indeed, according to Stokes' theorem, the exact choice of contour $\tilde \Gamma$ does not matter as long as it is obtained by continuous deformation of $\partial \mathbb{S}$ without hitting $\mathbb{S}$ or $\x$. Introduce $\boldsymbol{e}_r$ to be the standard unit vector associated to spherical polar coordinates (i.e.\ $\boldsymbol{e}_r$ points outward of $\S^2$ and is normal to $\S^2$) and $\boldsymbol{t}$ to be the unit vector tangent to $\tilde \Gamma$ consistent with this contour's orientation (in particular, $\boldsymbol{t}$ is tangent to $\S^2$). We can then define the unit vector $\n$ normal to $\tilde \Gamma$ and tangent to $\S^2$ 
by $\n = \boldsymbol{t} \times \boldsymbol{e}_r $. For any function $f(\y)$ defined on $\tilde \Gamma$, one can therefore define its normal derivative $\partial_{\n}f(\y)\equiv \n\cdot\tilde\nabla f (\y)$, where $\tilde \nabla$ is the standard gradient operator restricted to $\S^2$. The Green's formula representation of $u(\x)$ is then given by 
\begin{align}
	u(\x) = \int_{\tilde\Gamma} \left[ u(\y) \partial_{\n}G(\x;\y) - G(\x;\y) \partial_{\n}u(\y)\right] d\ell, \label{e:Greens3rdSphere}
\end{align}
where $d\ell$ denotes the line element on $\tilde\Gamma$. The integration and the normal derivatives within the integrand are performed with respect to the $\y$ coordinate. This formula can be proven in a similar way to the more standard one in $\mathbb{R}^2$ as shown in \citep[Appendix A]{AssierPoonPeake2016}, for instance. 


\begin{remark}[Restriction on $\lambda$]
	\label{Rem.Ch02--ValueLambda}
	As in \cite{partAcomplexSphere}, given the formula (\ref{eq.Ch02--Def.GreensFunctionG}) it is clear that we need to have $\lambda\notin \mathbb{Z}$. These values correspond to $-\lambda(\lambda+1)$ being eigenvalues of the LBO on $\S^2$ (see e.g.\ \citep{keller1999singularities}). 
    Since in the present work we are also interested in $g$, not solely in $G$, for a similar reason, we also do not want $-\lambda(\lambda+1)$ to coincide with the eigenvalues of the LBO on $\S^2$ subjected to Dirichlet boundary condition on $\mathbb{S}$. This set corresponds again to a countable set of real values, as for example in \citep{AssierPoonPeake2016}.
	Therefore, from now on, $\lambda$ will be assumed not to belong to these two countable sets.
\end{remark}

\begin{remark}[More generality] We chose to consider the case of only one scatterer subject to Dirichlet boundary conditions for simplicity of exposition. However, everything we do in the present work applies also to arbitrarily many scatterers (as long as they are all contained within the spherical cap $\mathcal{C}$), each subject to boundary conditions that can be of the Dirichlet, Neumann or Robin type.
\end{remark}

\begin{remark}[Scatterers of specific interest] As mentioned in introduction, the spherical diffraction problems studied in this work can be related to three dimensional diffraction problems. The spherical problem typically arises through a method of separation of variables. For instance if the scatterer $\mathbb{S}$ is a spherical cap, the corresponding three dimensional problem is that of diffraction by a circular cone. If the scatterer is part of a great circle (a slit), then the corresponding three-dimensional problem is that of diffraction by a plane-sector (degenerate elliptic cone), a particular case of which is the long-standing quarter-plane problem (see, e.g.\ \cite{AssierAbrahams2021, AssierPeake2012, AssierPeake2012FarField, AssierShanin2019, AssierShanin2021VertexGreensFunctions,  assier2024contribution, lyalinov2013scattering,  sarigiannidis2026integral, Shanin2005b, shanin2005modified, shanin2012asymptotics, Smyshlyaev1993, ValyaevShanin2012}). The latter problem is of great interest to the authors and played an important motivational role in this work.  
\end{remark}

Before starting to work on the resolution of this scattering problem on the sphere, we would like to motivate our approach by first reminding the readers about important facts and notions associated with scattering problems on the plane in the next section. Our aim is to draw strong analogies between these two kinds of scattering problems that are seemingly very different due to the geometry of the ambient space (one is compact and curved, the other is flat and non-compact). These analogies will be given in Section~\ref{sec:analogies}.



 \section{Motivation: Wave scattering on a plane} \label{sec:motivation}

One of the main aims of this article is to introduce concepts for scattering on a sphere that mimic a set of concepts that have proved to be very helpful for scattering within a plane. In this section we will briefly remind the reader of some of those concepts. In order to do so, as an example, we consider the canonical diffraction problem of a plane wave scattered by a segment (or strip), as studied for instance in \cite{Williams1982,Shanin2001,Shanin2003,AssierShanin2021AnalyticalCont}. When possible we will re-use some notations from the previous section when describing similar objects in order to insist on the similarities.

Throughout this section we work with points $\x=(x_1,x_2)\in\mathbb{R}^2$ and consider the total field $u^{\text{tot}}(\x)$, the incident field $u^{\text{in}}(\x)$ and the scattered field $u(\x)$ related by
\[
u^{\text{tot}} = u^{\rm in} + u.
\]
The incident field is chosen to be a plane wave and can be written as
\[
u^{\rm in} = \exp \{ i k (x_1 \cos \left( \theta^{\rm in} \right) - x_2 \sin \left( \theta^{\rm in}) \right) \} ,
\]
for some incident angle $\theta^{\text{in}}$ defined as in Figure~\ref{f:AS01}, left.
All three wave fields satisfy the Helmholtz equation
\begin{equation}
\Delta u + k^2 u = 0, 
\label{e:AS001}
\end{equation}
where $\Delta$ is the usual two-dimensional Laplacian and the parameter $k$ represents the wavenumber.
We assume that the scatterer, represented by $\x\in[-a,a]\times \{0\}$ is subject to Dirichlet boundary conditions, meaning that $u^{\text{tot}} = 0$ on both faces of the scatterer. This problem is illustrated in \figurename~\ref{f:AS01}~(left).

To ensure that the problem is well posed, the scattered field $u$ is required to satisfy the Sommerfeld radiation condition at infinity\footnote{Note that in the spherical scattering problem of Section \ref{sec:pbformulationsphere}, no such condition is required because the ambient space is compact.}, while the total field is required to satisfy the Meixner conditions at the vertices $(\pm a, 0)$, see e.g. \cite{Babich2007}.





The following concepts (a)--(d) can be applied to this problem: 

{\paragraph*{(a) Green's identity}

It is well-known 
that the scattered field in the whole plane 
can be expressed through the field on some contour $\tilde \Gamma$ encircling the scatterer (see \figurename~\ref{f:AS01}~(centre)). 
The formula (Green's identity) is as follows: 
\begin{equation}
u(\x) = \int_{\tilde \Gamma}
\left[ 
 G(\x ;\y) \ptl_{\n} u(\y)
-
u (\y) \ptl_{\n} G(\x ;\y)
\right] d\ell.
\label{e:AS002}
\end{equation}
The observation point $\x$ should be located outside $\tilde \Gamma$ and $\y$ is understood to lie on $\tilde \Gamma$. The function $G$ is the free-space two-dimensional Helmholtz Green's function given by:
\begin{align}
G(\x;\y) = - \frac{i}{4} H_0^{(1)} (k |\x- \y|), \label{e:Greal}
\end{align}
where $H_0^{(1)}(\cdot)$ is the Hankel's function of the first kind.  The operator $\partial_{\n}$ is the standard normal derivative with respect to $\y$, where $\n$ is the usual unit normal vector to $\tilde \Gamma$ consistent with its orientation.  Note the conceptual similarity with \eqref{e:Greens3rdSphere}.



\paragraph*{(b) The directivity pattern of the scattered field.}


Consider an observation point $\x$ at a large distance from the scatterer and in the observation direction associated with an angle $\theta$, i.e.\ we can write 
\[
x_1 = r\cos (\theta) , \quad x_2 = r \sin (\theta), 
\qquad 
r \to \infty. 
\]
Using known large-argument asymptotics formulae for the Hankel's function, (\ref{e:AS002}) can be rewritten in the form: 
\begin{equation}
u(\x) \underset{r \rightarrow \infty}{=} B \frac{e^{i k r}}{\sqrt{kr}} \mathcal{D}(\theta) + \mathcal{O} \left( \frac{e^{i k r}}{(kr)^{3/2}}  \right), 
\text{ where }  
B = - \frac{e^{i \pi / 4}}{2\sqrt{2 \pi}}.
\label{e:AS007}
\end{equation}
The term $\mathcal{D}(\theta)$ is called the \textit{directivity pattern} of the scattered field and is given by 
\begin{equation}
D(\theta) =
\int_{\tilde \Gamma}
\left[ 
w(\y, \theta + \pi) \ptl_{\n} u (\y) 
-
u (\y) \ptl_{\n} w(\y, \theta + \pi ) 
\right] d\ell,
\label{e:AS008}
\end{equation}
where 
$w(\y,\theta)$ is defined for any $\y=(y_1,y_2)\in\mathbb{R}^2$ and any $\theta\in\mathbb{R}$ to be the plane wave going in the $\theta$ direction:
\begin{equation}
w(\y , \theta) = \exp\{ 
i k (y_1 \cos \theta + y_2 \sin \theta)
\}. 
\label{e:AS009}
\end{equation}

This directivity pattern is the main characteristic of the scattered field. Although it is a coefficient appearing in the leading order term of the 
far-field asymptotic expansion (\ref{e:AS007}), it can be used to reconstruct all the other terms in the expansion as will be briefly explained in the next subsection.

\paragraph*{(c) Plane-wave representation of the scattered field.}

Consider the upper-half space $\{x_2 > 0\}$. One can show (see e.g.\ \cite{Babich2007}) that the scattered field in this domain has the plane-wave representation 
\begin{equation}
u (\x) = \int_{\gamma} \mathcal{A}(\beta) \, w(\x , \beta) \, d\beta, 
\label{e:AS010}
\end{equation}
where $\mathcal{A}(\beta)$ is a spectral function directly related to the directivity pattern through 
\[
\mathcal{A}(\beta) = \frac{1}{4\pi i} \mathcal{D}(\beta),
\]
and $\gamma$ is the contour in the $\beta$-plane shown in \figurename~\ref{f:AS01}~(right). A modification of this contour, the method of sliding contours (see e.g.\ \cite{Babich2007,Nethercote2020ReviewPaper}), allows one to describe the scattered field for any $\x$, thereby lifting the upper-half space restriction. Moreover, using, for instance, the saddle point method to asymptotically write the right-hand-side of \eqref{e:AS010} as a power series in $r=|\x|$ as $r \to \infty$, we find that all terms in the expansion \eqref{e:AS007} are determined by $\mathcal{D}(\theta)$.

\begin{figure}[h!]
	\centering
	\includegraphics[width=\textwidth]{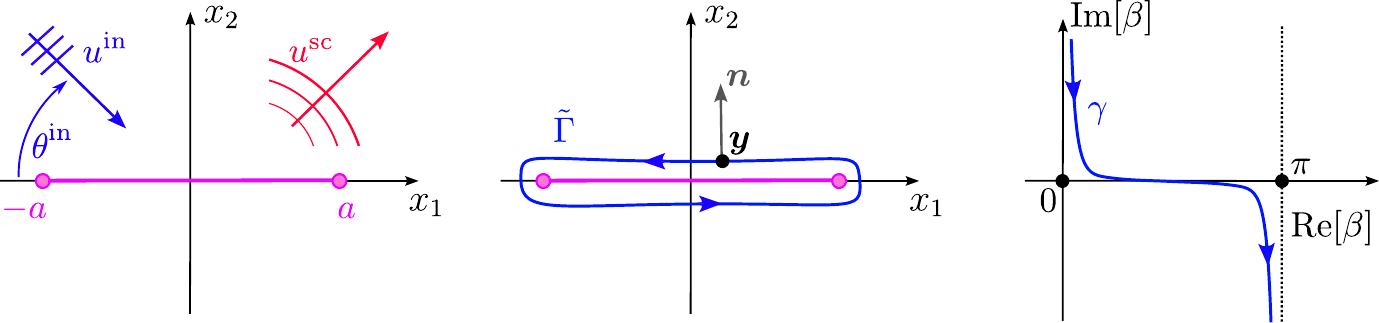}
	\caption{\small Geometry of the motivating planar problem (left). Illustration of the contour used in Green's identity (\ref{e:AS002})  (centre). Integration contour the plane wave representation (3.6) (right).}
	\label{f:AS01}
\end{figure}


\paragraph*{(d) Ordinary differential equation for the directivity. }

The points (a)--(c) are true for any generic compact scatterer with piecewise smooth boundary. A surprising fact can be established for the problem of diffraction by an ideal (Dirichlet or Neumann) segment considered in this section: 
the directivity $\mathcal{D}(\beta)$ can be shown (see e.g. \cite{Latta1956, Williams1982,Shanin2001}) to be a solution to a second-order ordinary differential equation of the form
\[
\left( \frac{d^2}{d\beta^2}  + F(\beta) \frac{d}{d\beta}  + 
H(\beta)\right) \mathcal{D}(\beta) = 0,  
\]
where the coefficients $F$ and $H$ are rather simple functions of $\beta$ (the equation
becomes Fuchsian after the change of variable $\xi = k \cos \beta$).

We plan to develop all the concepts listed above for spherical problems.
The notion of  plane-waves for scattering on a sphere was introduced in \cite{partAcomplexSphere}. The present 
article will focus on spherical analogies of the points (a)--(c), while the point (d) will be addressed in some future work. 





\section[Plane-wave representations of the ``free-space''  Green's function $G$]{Plane-wave representations of the ``free-space'' \newline Green's function $G$} \label{sec:FreeSpaceGreen}

In order to obtain plane-wave representations of the scattered field $u$  we will need
plane-wave representations of the Green's function $G(\x;\y)$ first. Plane-wave representations of the Green's function were a central point of \citep{partAcomplexSphere}. In this section, we will briefly summarise the main results of this paper and will also provide an additional, novel type of plane-wave representation for $G$. 

\subsection{The concept of plane wave on a sphere} \label{sec:planewavesdef}

First, let us introduce the ``sphere at
infinity'' $\Xi$, given in \citep[Section 2]{partAcomplexSphere}. To do this, we define the set $\mathcal{N}$ as the set of ``vectors of zero length'' in $\mathbb{C}^3$:
\begin{equation}
    \mathcal{N}=\{\bdeta=(\eta_1,\eta_2,\eta_3)\in\mathbb{C}^3 : \eta_1^2+\eta_2^2+\eta_3^2=0\}.
\end{equation}
For each such $\bdeta\in\mathcal{N}$, we define
\begin{equation}
    [\bdeta]\equiv\class{0:\eta_1 : \eta_2 : \eta_3} \in \CP^3,
\end{equation}
where $\mathbb{C}\mathbb{P}^3$ is the complex projective space of dimension 3. Here, $\class{0:\eta_1 : \eta_2 : \eta_3}$ denotes the equivalence class of the vector $(0,\eta_1, \eta_2, \eta_3)\in \mathbb{C}^4$ relative the following equivalence relation. $(0,\eta_1, \eta_2, \eta_3) \sim  (0,\eta'_1, \eta'_2, \eta'_3)$ if, and only if, $(0,\eta_1, \eta_2, \eta_3) = c \phantom{.} (0,\eta'_1, \eta'_2, \eta'_3)$  for some $c \in \mathbb{C}\setminus \{0\}$. 
Finally, the sphere at infinity $\Xi\subset\CP^3$ can be defined as 
\begin{equation}
  \Xi = \{ [\bdeta]  : \bdeta\in\mathcal{N}\}. \label{e:PWNew1.1}
\end{equation}

One may think of $\Xi$ in two different ways. First, $\Xi$ can be identified with the complex circle $\{(\cos(\beta),\sin(\beta)) \in \mathbb{C}^2: ~ \beta \in \mathbb{C}\}$ which has been compactified by ``adding in'' two infinite points located at $\beta = + i \infty$ and $\beta= - i \infty$. More precisely, as was shown in \citep{partAcomplexSphere}, \textit{every} point $[\bdeta]\in \Xi$ may be parametrised according to $
        [\bdeta] = [(\cos(\beta),\sin(\beta),i)],$ for complex $\beta$ so long as we permit $\beta = \pm i \infty$.
Second, 
$\Xi$ can be shown to be biholomorphic to the Riemann sphere $\widehat{\mathbb{C}}$ by using the change of variable $\tau=e^{-i\beta}$. So, one may think of $\Xi$ either as a closed cylinder, or as the Riemann sphere. This is illustrated in Figure \ref{fig:PWNew01}.

\begin{figure}[h!]
    \centering
    \includegraphics[width=0.95\linewidth]{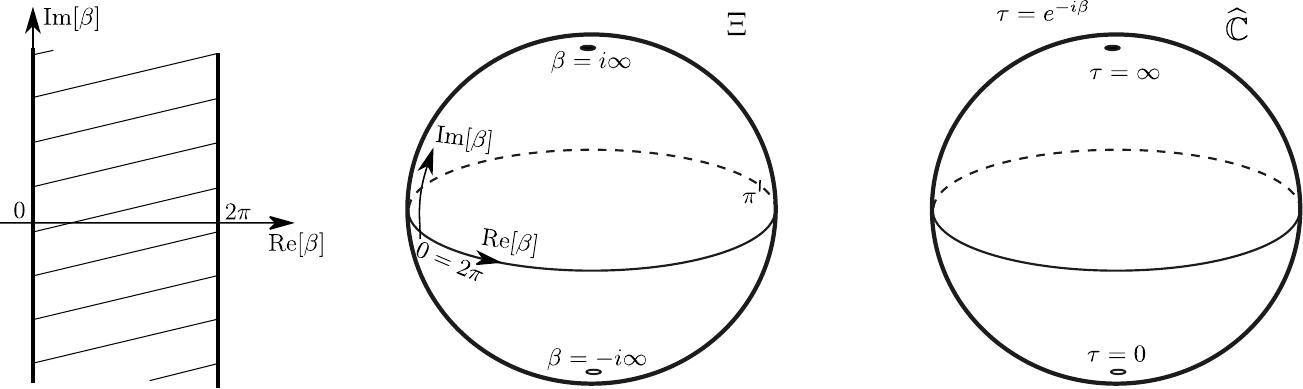}
    \caption{Visualisation of $\Xi$ as a closed complex circle (left and middle, taken from \citep{partAcomplexSphere}, Figure 2), and as Riemann sphere (right) in the variable $\tau = e^{- i \beta}$.}
    \label{fig:PWNew01}
\end{figure}
   Moreover, let us note that instead of choosing the parametrisation of $\Xi$ according to $[\bdeta] = [(\cos(\beta),\sin(\beta),i)]$, we may \textit{also} parametrise points on $\Xi$ via 
        \begin{align}
            [\bdeta] = [A_{\x}(\cos(\beta),\sin(\beta),i)] \label{e:ParamX}
        \end{align}
    where $A_{\x} \in \SO(3)$ denotes a real Euler rotation that rotates $\xNP=(0,0,1)$ onto some point $\x \in \mathcal{S}^2$. The parametrisation \eqref{e:ParamX} will play an important role in the Section \ref{sec:farfield}, and is also relevant to Sections \ref{sec:planewaverepG} and \ref{sec:PWNew02}, see Remark \ref{rem:4.4}.
    
    We will denote points in $\Xi$ by the letter $p$. A point $p\in \Xi$ can thus be thought of as an element of the quadric given in \eqref{e:PWNew1.1}, as an element of the complex circle, or as a point on the Riemann sphere. We will also write $p = [\boldsymbol{\eta}]$ to highlight that the definition \eqref{e:PWNew1.1} of $\Xi$ is used. 
    
    With the help of $\Xi$, we can define two different types of \textit{plane waves}.

\begin{definition}[\citep{partAcomplexSphere}, Definition 3.1]
  Let $p \in \Xi$ be a point represented by $p = [\boldsymbol{\eta}]$. A plane
  wave for Laplace-Beltrami scattering problems on
  $\mathcal{S}^2$ is given by either of the following two functions:
  \begin{equation}
    w_1 (\boldsymbol{x}, p ; \boldsymbol{x}^{\mathrm{ref}}) = \left( - i
    \frac{\boldsymbol{x} \cdot \boldsymbol{\eta}}{\boldsymbol{x}^{\mathrm{ref}} \cdot
    \boldsymbol{\eta}} \right)^{- (1 + \lambda)}, \quad w_2 (\boldsymbol{x}, p ;
    \boldsymbol{x}^{\mathrm{ref}}) = \left( - i \frac{\boldsymbol{x} \cdot
    \boldsymbol{\eta}}{\boldsymbol{x}^{\mathrm{ref}} \cdot \boldsymbol{\eta}}
    \right)^{\lambda},
    \label{def:planewavesderaph}
  \end{equation}
  where $\boldsymbol{x}, \boldsymbol{x }^{\mathrm{ref}} \in \mathcal{S}^2$ and
  $\boldsymbol{x}^{\mathrm{ref}}$ is a fixed reference point. We choose the branch of these plane waves according to
  \begin{equation}
    w_1 (\boldsymbol{x}^{\mathrm{ref}}, p ; \boldsymbol{x}^{\mathrm{ref}}) = e^{i \pi
    (\lambda + 1) / 2}, \quad w_2 (\boldsymbol{x}^{\mathrm{ref}}, p ;
    \boldsymbol{x}^{\mathrm{ref}}) = e^{- i \pi \lambda / 2} .
    \label{eq:planewavesbranch}
  \end{equation}
  \label{def:planewavesbranch}
\end{definition}
As explained in \cite{partAcomplexSphere}, these functions are called plane waves because they are shown to be giving the behaviour of the Green's function $G(\x;\y)$ as the source $\y$ tends to infinity within the complexification of $\S^2$. The point $p\in\Xi$ does in some way specify in which `direction' this limit is taken.

\subsection{Singularities of plane-waves; the maps $\Phi$ and $\Psi$} \label{sec:singplanewaves}

As shown in \cite[Section 3]{partAcomplexSphere}, the plane waves $w_1$ and $w_2$ exhibit several singularities. For fixed $\boldsymbol{x}
= \boldsymbol{x}^{\star} \in \mathcal{S}^2$ they are singular on $\Xi$ whenever $p=[\bdeta]$ is such that
$\boldsymbol{\boldsymbol{x}^{\star}} \cdot \boldsymbol{\eta} = 0$ or
$\boldsymbol{x}^{\mathrm{ref}} \cdot \boldsymbol{\eta} = 0$. For fixed
$p=p^{\star} = [\boldsymbol{\eta}^{\star}] \in \Xi$ chosen such that $\x^{\mathrm{ref}}\cdot \boldsymbol{\eta}^{\star} \neq 0$, they are singular on
$\mathcal{S}^2$ whenever $\boldsymbol{x} \cdot \boldsymbol{\eta }^{\star} = 0$. These singularities are all of the branching type. Because of the explicit definition of these plane waves in terms of elementary functions, it is direct to see that their gradient (w.r.t.\ $\x$) also has the same singularity structure. In particular the same is true for any normal derivatives that will be taken later on in Section~\ref{sec:PWNew02}.


For $\boldsymbol{x} \in \mathcal{S}^2$, the values of $\bdeta$ that are solutions to the equation $\boldsymbol{x} \cdot
\boldsymbol{\eta} = 0$ correspond to two points on $\Xi$ which we denote by
$\Phi_+ (\boldsymbol{x})$ and $\Phi_- (\boldsymbol{x})$. The points $\Phi_+
(\boldsymbol{x})$ and $\Phi_- (\boldsymbol{x})$ vary smoothly with $\boldsymbol{x}$,
so we only need to specify the points $\Phi_+ (\boldsymbol{x})$ and $\Phi_-
(\boldsymbol{x})$ at a single point $\boldsymbol{x} \in \mathcal{S}^2$ in order to
define which points correspond to the $_+$ subscript, and which points
correspond to the $_-$ subscript, see \citep[Lemma 2.5, $f$.]{partAcomplexSphere}.
This is done as follows. We set
\begin{equation}
  \Phi_+ (0, 0, 1) = [(- i, 1, 0)], \quad \text{and} \quad \Phi_- (0, 0, 1) =
  [(- i, - 1, 0)] .
\end{equation}
Thus, we should think of $\Phi (\boldsymbol{x}) = \{ \Phi_+ (\boldsymbol{x}), \Phi_-
(\boldsymbol{x}) \}$ as a (2-valued) mapping from $\mathcal{S}^2 \rightarrow
\Xi$, as illustrated in Figure \ref{fig:PWApp03}.

In Figure \ref{fig:PWApp03} we also show the 2-valued map $\Psi : \Xi
\rightarrow \mathcal{S}^2$, which assigns to $p=[\bdeta]$ the two points $\Psi_+ (p)$
and $\Psi_- (p)$ corresponding to the two solutions of the equation $\boldsymbol{x}
\cdot \boldsymbol{\eta} = 0$ on $\mathcal{S}^2$.

\begin{figure}[h!]
    \centering
    \includegraphics[width=0.6\linewidth]{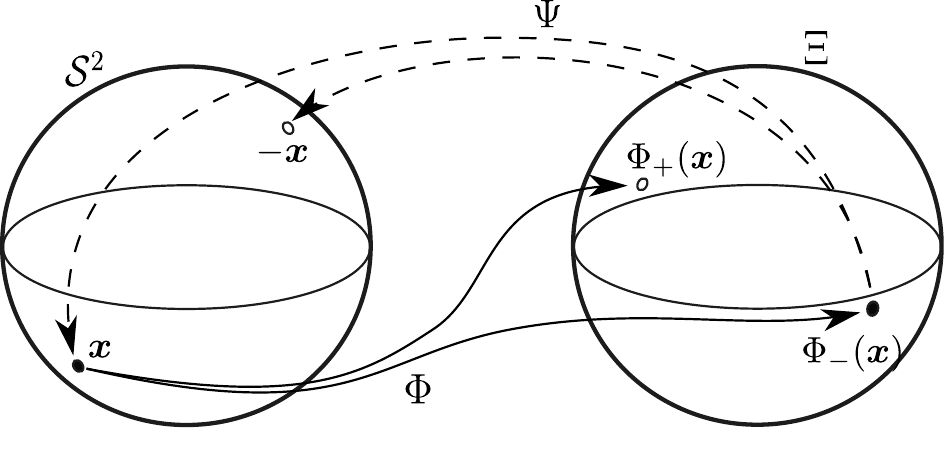}
    \caption{\small Illustration of the maps $\Psi$ and $\Phi$, adapted from \citep{partAcomplexSphere}.}
    \label{fig:PWApp03}
\end{figure}

For any given $p\in \Xi$, the points $\Psi_+ (p)$ and $\Psi_- (p)$ are opposites, in
the sense that $\Psi_+ (p) = - \Psi_- (p)$, see Figure \ref{fig:PWApp03}. The points $\Phi_+ (\boldsymbol{x})$ and $\Phi_- (\boldsymbol{x})$ are also
``opposite'' on $\Xi$. Formulating precisely what it means for \
$\Phi_+ (\boldsymbol{x})$ and $\Phi_- (\boldsymbol{x})$ to be opposite is quite
lenghty, however. We refer to \citep[Section2]{partAcomplexSphere}, for a detailed
explanation. Here, we just note the following properties of $\Phi_{\pm}$ and $\Psi_{\pm}$. 
\begin{enumerate}
    \item For all $\x \in \S^2$, we have $
  \Psi_+ (\Phi_+ (\boldsymbol{x})) = \boldsymbol{x}$ and $\Psi_-
  (\Phi_- (\boldsymbol{x})) = \boldsymbol{x}$;
  \item For all $p\in\Xi$, we have $\Phi_+(\Psi_+(p))=p$ and $\Phi_-(\Psi_-(p))=p$. In particular, this implies ${\Phi_+(-\x)= \Phi_-(\x)}$;
  \item The maps $\Psi_+,\Psi_- : \Xi \to \mathcal{S}^2$ are diffeomorphisms, with $\Phi_+$ and $\Phi_-$, respectively, being their inverse mappings; 
  \item For any great circle $\Gamma \subset\mathcal{S}^2$, we have $\Phi_+(\Gamma) = \Phi_-(\Gamma)$, which we will just denote~$\Phi(\Gamma)$. If one thinks of $\Xi$ as the Riemann sphere $\widehat{\mathbb{C}}$, one should think of $\Phi(\Gamma)$ as a great circle on this Riemann sphere, as illustrated in Figure \ref{fig:PWApp01}, right. 
\end{enumerate}
Explicit formulae for all these maps are given in \citep{partAcomplexSphere}. Using the properties 2, 3, and 4 above, we find the following Lemma to be true.

\begin{lemma}
    Let $\x$ and $\y$ be two distinct points of $\S^2$ and let $\Gamma$ be a great circle of $\S^2$ separating $\x$ and $\y$. Then the contour $\gamma=\Phi(\Gamma)$ is a contour on $\Xi$ separating the two points $\Phi_+(\y)$ and $\Phi_-(\x)$ from the two points $\Phi_-(\y)$ and $\Phi_+(\x)$.
    \label{lem:defgammaonXi}
\end{lemma}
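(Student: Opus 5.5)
Since $\Psi_+$ and $\Psi_-$ are homeomorphisms $\Xi\to\S^2$ with inverses $\Phi_+$ and $\Phi_-$ (property 3), the plan is to transport the separation statement from $\S^2$ to $\Xi$ through these maps.

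First, I would set up the topology. The great circle $\Gamma$ splits $\S^2$ into two open hemispheres $H_1\ni\x$ and $H_2\ni\y$. By property 4, $\gamma:=\Phi_+(\Gamma)=\Phi_-(\Gamma)$ is a single simple closed curve on $\Xi\cong\widehat{\mathbb{C}}$. By the Jordan curve theorem, $\Xi\setminus\gamma$ has exactly two components $U$ and $V$. Since $\Phi_+$ is a homeomorphism $\S^2\to\Xi$ mapping $\Gamma$ onto $\gamma$, it maps $H_1$ and $H_2$ onto the two components of $\Xi\setminus\gamma$. Label them so that $\Phi_+(H_1)=U$ and $\Phi_+(H_2)=V$. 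Then $\Phi_+(\x)\in U$ and $\Phi_+(\y)\in V$.

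Next, I would relate $\Phi_-$ to $\Phi_+$. Property 2 gives $\Phi_-(\z)=\Phi_+(-\z)$ for every $\z\in\S^2$. The antipodal map preserves the great circle $\Gamma$ and exchanges the two hemispheres, $-H_1=H_2$. Hence $-\x\in H_2$ and $-\y\in H_1$, which gives $\Phi_-(\x)=\Phi_+(-\x)\in V$ and $\Phi_-(\y)=\Phi_+(-\y)\in U$.

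Combining the two steps, $U$ contains $\Phi_+(\x)$ and $\Phi_-(\y)$, while $V$ contains $\Phi_-(\x)$ and $\Phi_+(\y)$. This is exactly the claimed separation. Note also that none of these four points lies on $\gamma$, since $\pm\x,\pm\y\notin\Gamma$.

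The only delicate point is the consistency of property 2 with the global labelling of the $\pm$ branches. The identity $\Phi_+(-\x)=\Phi_-(\x)$ has to hold globally, not just locally; the paper states it as a consequence of property 2, so I would take it as given. Everything else is the Jordan curve theorem plus the homeomorphism property.
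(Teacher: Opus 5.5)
Your argument is correct and is exactly the unpacking the paper intends: the paper gives no detailed proof and only states that the lemma follows from properties 2, 3 and 4 of $\Phi_\pm$ and $\Psi_\pm$, which is what you use (transport the two hemispheres by the homeomorphism $\Phi_+$, then use $\Phi_-(\x)=\Phi_+(-\x)$ and the antipodal invariance of $\Gamma$). As a minor simplification, you do not need the Jordan curve theorem, since $\Phi_+$ being a homeomorphism already shows that $\Xi\setminus\gamma$ is the disjoint union of the two open connected sets $\Phi_+(H_1)$ and $\Phi_+(H_2)$.
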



Let us note that the link established by the maps $\Phi$ and $\Psi$ between $\mathcal{S}^2$ and $\Xi$ is due to a  geometric structure common to both spheres, as was discussed in \citep[Section 2]{partAcomplexSphere}. This structure is the \textit{complexified sphere}  $\mathcal{S}^2_{\c}$, of which $\mathcal{S}^2$ is a submanifold (see also Section \ref{sec:analytical-continuation}). $\Xi$, on the other hand, may be interpreted as an ``extremity'' of $\mathcal{S}^2_{\c}$ in the sense, that it can be viewed as a sphere describing the (asymptotic) infinity of $\mathcal{S}^2_{\c}$.

\subsection{Plane-wave representations of the Green's function} \label{sec:planewaverepG}

Since $\Xi$ is biholomorphic to the Riemann sphere
$\widehat{\mathbb{C}}$, one can use contour integration on
$\Xi$ and all of the usual
one-complex-variable concepts become available on $\Xi$. For instance, branching singularities on $\Xi$ are just usual 1D complex analysis branch points, and one might artificially introduce some branch cuts linking those to ensure single-valuedness of singular functions (e.g.\ the plane waves) on $\Xi$ minus the cuts. Moreover, as explained in \cite[Appendix D]{partAcomplexSphere}, a meromorphic 1-form $\Omega$ on $\Xi$ with only simple poles is uniquely determined by its residues. We can therefore define the form $\Omega_{\x}$ as follows.

\begin{definition}
  Let $\boldsymbol{x} \in \mathcal{S}^2$ be fixed. We define the 1-form $\Omega_{\boldsymbol{x}} $ on $\Xi$ as the
  unique meromorphic 1-form with one simple pole at $p=\Phi_+
  (\boldsymbol{x})$ with residue $-i$, another simple pole at $p=\Phi_- (\boldsymbol{x})$ with residue $+i$, and no other poles. 
\end{definition}

\begin{remark}\label{rem:4.4}
Let us give an explicit formula for $\Omega_{\x}$. Parametrise points $p \in \Xi$ according to \eqref{e:ParamX}. Then, in this coordinate system, we have 
     $\Omega_{\x} = d\beta.$
Indeed, $d\beta$ is singular at $\beta = \pm i \infty$, and the points $p$ corresponding to $\beta = \pm i \infty$ are actually equal to $\Phi_{\pm}(\x)$, as follows from \citep[Sections 2(b)-(c)]{partAcomplexSphere}. The fact that $ \Omega_{\x} = d\beta$ has residue $-i$ at $\beta = i \infty$ and residue $+i$ at $\beta = -i \infty$ can be seen by choosing a new variable $\tau_{\pm} = e^{\pm i \beta}$ on $\Xi$, see \citep[Section 4(a)]{partAcomplexSphere}. 
\end{remark} 


Contours akin to $\gamma$ in Lemma \ref{lem:defgammaonXi} and the 1-form
$\Omega_{\boldsymbol{x}}$ were introduced and discussed in detail in \citep[Section 4 and Appendix E]{partAcomplexSphere}.  They are key to the following proposition that provides the first two plane-wave representations for the ``free-space'' Green's function $G$ introduced in (\ref{eq.Ch02--GreensFunctionGPDE})-(\ref{eq.Ch02--Def.GreensFunctionG}). This result was initially derived and proven in \citep{partAcomplexSphere}.


\begin{proposition}[\cite{partAcomplexSphere}, Proposition D.2]
    Let $\x$ and $\y$ be two points on~$\S^2$ such that $\x\neq\y$, and define $\mathcal{B}_{\lambda}=\tfrac{-i}{8 \pi \sin(\pi \lambda)} = \mathcal{B}_{-\lambda-1}$. Then, the following integral representation formulae are valid
    \begin{align}
         G (\boldsymbol{x} ; \boldsymbol{y}) &=\mathcal{B}_{\lambda}
    \int_{\gamma} w_2  (\boldsymbol{x}, p ; -
    \boldsymbol{x}^{\mathrm{ref}}) w_1 (\boldsymbol{y}, p ;
    \boldsymbol{x}^{\mathrm{ref}}) \Omega_{\boldsymbol{x}^{\mathrm{ref}}},
    \label{e:New01} \\
     G (\boldsymbol{x} ; \boldsymbol{y}) &= \mathcal{B}_{\lambda}
    \int_{\gamma} w_1 (\boldsymbol{x}, p ; -
    \boldsymbol{x}^{\mathrm{ref}}) w_2 (\boldsymbol{y}, p ;
    \boldsymbol{x}^{\mathrm{ref}}) \Omega_{\boldsymbol{x}^{\mathrm{ref}}}
    \label{e:New02},
    \end{align}
    where $\gamma$ is any contour of the form $\Phi(\Gamma)$ for some great circle $\Gamma$ on $\S^2$ separating the two points $\x$ and $\y$, and $\x^{\rm ref}$ is a reference point lying on the same side of $\Gamma$ as $\y$. This is illustrated in Figure \ref{fig:PWApp01}~(left and middle). 
    The contour $\gamma$ is understood to be oriented such that the points
  $\Phi_+ (\boldsymbol{y})$ and $\Phi_- (\boldsymbol{x})$ lie to its left, and the points $\Phi_-
  (\boldsymbol{y})$ and $\Phi_+ (\boldsymbol{x})$ lie to its  right. The integration is performed with respect to $p$.
  \label{lem:PWNew01}
\end{proposition}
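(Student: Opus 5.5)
The plan is to reduce the statement, by rotation invariance and independence from the auxiliary choices, to a special configuration where the integral becomes a classical Laplace-type integral for the Legendre function. Only \eqref{e:New01} needs to be treated in detail. Formula \eqref{e:New02} then follows by the same argument with $\lambda\mapsto-\lambda-1$, which exchanges $w_1$ and $w_2$ and leaves $\mathcal{B}_\lambda$ and $P_\lambda=P_{-\lambda-1}$ unchanged.

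First I would show that the integral in \eqref{e:New01} does not depend on the choices made.

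\emph{The great circle $\Gamma$.} If $\Gamma$ is deformed continuously among great circles separating $\x$ and $\y$, then $\gamma=\Phi(\Gamma)$ moves on $\Xi$ without crossing the four branch points $\Phi_\pm(\x)$, $\Phi_\pm(\y)$; this is Lemma \ref{lem:defgammaonXi}. The integrand is a holomorphic 1-form away from these points (and the branch points coming from $\x^{\rm ref}$, which stay on the correct side). Cauchy's theorem on $\Xi\simeq\widehat{\mathbb{C}}$ then shows the integral is unchanged.

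\emph{The reference point $\x^{\rm ref}$.} Replacing $\x^{\rm ref}$ by $\x'^{\rm ref}$ multiplies $w_2(\x,p;-\x^{\rm ref})\,w_1(\y,p;\x^{\rm ref})$ by a factor $c\,(\x'^{\rm ref}\cdot\bdeta/\x^{\rm ref}\cdot\bdeta)$, with $c$ fixed by the branch choices \eqref{eq:planewavesbranch}. The defining residue properties show that
\[
\Omega_{\x'^{\rm ref}}=\frac{\x^{\rm ref}\cdot\bdeta}{\x'^{\rm ref}\cdot\bdeta}\,\Omega_{\x^{\rm ref}}
\]
up to a constant, since both sides have the same poles and residues up to scaling. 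The products therefore agree, and the branch normalisations should make the constants cancel.

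\emph{Rotations.} Both sides transform covariantly under $\SO(3)$, because $\bdeta\mapsto A\bdeta$ preserves $\mathcal{N}$, $\Phi_\pm$ and the form $\Omega$.

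After these reductions I may take $\y=\x^{\rm ref}=\xNP$, let $\Gamma$ be a great circle close to the equator, and put $\x=(\sin\theta,0,\cos\theta)$. Using the parametrisation \eqref{e:ParamX} with $A_{\x^{\rm ref}}=I$, Remark \ref{rem:4.4} gives $\Omega_{\x^{\rm ref}}=d\beta$. Since $\y=\x^{\rm ref}$, the factor $w_1(\y,p;\x^{\rm ref})$ is the constant $e^{i\pi(\lambda+1)/2}$. With $\bdeta=(\cos\beta,\sin\beta,i)$ the other factor becomes
\[
w_2(\x,p;-\xNP)=\bigl(\cos\theta+i\sin\theta\cos\beta\bigr)^{\lambda}
\]
up to a fixed phase. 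The contour $\gamma=\Phi(\text{equator})$ is the real period $\beta\in[0,2\pi]$. The integral is then
\[
\int_0^{2\pi}(\cos\theta + i \sin\theta\cos\beta)^\lambda\,d\beta .
\]
By Laplace's first integral this equals $2\pi P_\lambda(\cos\theta)$, or $2\pi P_\lambda(-\cos\theta)$ depending on the sign conventions; the sign is fixed by which side of $\Gamma$ the point $\x$ lies on, which here gives $-\x\cdot\y$. Multiplying by $\mathcal{B}_\lambda$ and collecting the phases should give exactly $\frac{1}{4\sin\pi\lambda}P_\lambda(-\x\cdot\y)$, matching \eqref{eq.Ch02--Def.GreensFunctionG}.

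The main obstacle is the careful bookkeeping of branches and orientation. One must check that the branch choice \eqref{eq:planewavesbranch}, together with the side on which $\x$ sits relative to $\Gamma$, produces $P_\lambda(-\cos\theta)$ with exactly the constant $-i/(8\pi\sin\pi\lambda)$, and that the orientation of $\gamma$ (with $\Phi_+(\y)$ on its left) corresponds to increasing $\beta$. If the direct phase tracking becomes unwieldy, the fallback is an intrinsic argument. The integral solves the LBE in $\x$, since each plane wave does. It extends to all of $\S^2\setminus\{\y\}$ by moving $\Gamma$. One then computes its logarithmic singularity as $\x\to\y$ from the pinching of $\Phi_\pm(\x)$ with $\Phi_\pm(\y)$. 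Uniqueness for $\lambda\notin\mathbb{Z}$ would then identify the integral with $G$.
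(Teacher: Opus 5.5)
Your strategy is viable, but the reduction to the model configuration does not reach every case of the statement. None of your three reductions (moving $\Gamma$, moving $\x^{\rm ref}$, rotating) changes $\x\cdot\y$. Your explicit evaluation takes $\gamma$ to be the real $\beta$-period, i.e.\ $\Gamma$ is the equator, and the equator separates $\x$ from $\y=\xNP$ only when $\x\cdot\y<0$. For $\x\cdot\y\ge 0$ every admissible $\Gamma$ is genuinely tilted, so $\gamma$ is no longer the real period, and a circle ``close to the equator'' does not help because you never evaluate the integral along it. Moreover, the branch points $\Phi_\pm(\x)$ lie on or have crossed the real $\beta$-axis. These are the roots $\beta=\tfrac{\pi}{2}+is$ and $\beta=-\tfrac{\pi}{2}-is$ of $\sin\theta\cos\beta+i\cos\theta=0$, with $\sinh s=\cot\theta$. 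Laplace's first integral, which needs $\operatorname{Re} z>0$ for $z=-\cos\theta$, is then unavailable, so the configurations with $\x\cdot\y\ge 0$ are not covered.

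The missing step is an analytic continuation in $\x$:
\begin{itemize}
\item Fix $\y$ and take $\x^{\rm ref}=\y$. The unit normals $\boldsymbol{m}$ with $\x\cdot\boldsymbol{m}<0<\y\cdot\boldsymbol{m}$ form a nonempty convex, hence connected, subset of $\S^2$.
\item By your $\Gamma$-independence, the right-hand side of \eqref{e:New01} is therefore a well-defined function on $\S^2\setminus\{\y\}$.
\item A fixed $\Gamma$ serves a whole neighbourhood of each $\x$, so this function is real-analytic, as is $G(\cdot\,;\y)$.
\item The two agree on $\{\x\cdot\y<0\}$, and $\S^2\setminus\{\y\}$ is connected, so they agree everywhere.
\end{itemize}
Equivalently, you can continue Laplace's integral in $z$ by deforming the $\beta$-contour off the moving branch points. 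Your fallback paragraph mentions extension by moving $\Gamma$, but your main line never uses it.

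The steps you left as ``should'' do close exactly; note that this paper does not reprove the proposition but imports it from \cite{partAcomplexSphere}.

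\textbf{Change of reference point.} ``Same poles, residues up to scaling'' leaves a free constant. Branch normalisations cannot fix it, since they only produce factors $e^{2\pi i k\lambda}$. However, \eqref{e:New09} gives the $\Omega$-identity with constant exactly $1$. Together with \eqref{eq:D13fromPartA} it yields the pointwise identity \eqref{eq:fast-step2} of Appendix~\ref{App:New01}, which turns the integrand of \eqref{eq:raphappA9} into that of \eqref{e:New01}.

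\textbf{Model case.} Take $\y=\x^{\rm ref}=\xNP$, $\x=(\sin\theta,0,\cos\theta)$ and $\cos\theta<0$.
\begin{itemize}
\item One has $w_2(\x,p;-\xNP)=(\sin\theta\cos\beta+i\cos\theta)^{\lambda}=e^{-i\pi\lambda/2}(-\cos\theta+i\sin\theta\cos\beta)^{\lambda}$ with principal branches. The base stays in the lower half-plane and equals $-i$ at $\x=-\xNP$, consistent with \eqref{eq:planewavesbranch}.
\item $\gamma$ is traversed in increasing $\beta$, because $\beta=+i\infty$ is $\Phi_+(\xNP)=\Phi_+(\y)$, which must lie to the left.
\item Laplace's first integral with $z=-\cos\theta>0$ gives $2\pi e^{-i\pi\lambda/2}P_\lambda(-\cos\theta)$.
\item Finally $\mathcal{B}_\lambda\,e^{i\pi(\lambda+1)/2}\,e^{-i\pi\lambda/2}\,2\pi=1/(4\sin\pi\lambda)$, which reproduces \eqref{eq.Ch02--Def.GreensFunctionG}.
\end{itemize}
So the answer is $P_\lambda(-\cos\theta)$, not $P_\lambda(\cos\theta)$. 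Your integrand $(\cos\theta+i\sin\theta\cos\beta)^\lambda$ has a base that crosses the negative real axis, so it relates to the correct one only through the branch choice above, not by a harmless fixed phase. The symmetry $\lambda\mapsto-\lambda-1$ used for \eqref{e:New02} is fine.
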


Interestingly, there exists a third type of plane-wave representation of $G(\x;\y)$, which was not mentioned in \citep{partAcomplexSphere}, and is given below. It is proved in Appendix~\ref{App:New01}.

\begin{proposition} \label{prop:gammainfinityG}
     Let $\x$, $\y$ and $\x^{\text{ref}}$ be points on $\S^2$ such that $\x\neq\pm\y$ and $\x\neq \pm\x^{\text{ref}}$. Then, the following formula is valid
{\fontsize{11.85pt}{10}\selectfont
     \begin{align}
    G (\boldsymbol{x}; \boldsymbol{y}) &= \frac{\mathcal{B}_{\lambda}}{2}
   \!\! \int_{\gamma_{\infty}} \!\!\!\!\! \left\{w_1 (\boldsymbol{y}, p ;
    \boldsymbol{x}^{\mathrm{ref}}) w_2 (\boldsymbol{x}, p ; -
    \boldsymbol{x}^{\mathrm{ref}}) - w_2 (\boldsymbol{y}, p ;
    \boldsymbol{x}^{\mathrm{ref}}) w_1 (\boldsymbol{x}, p ; -\boldsymbol{x}^{\mathrm{ref}})\!\right\}\!\Omega_{\boldsymbol{x}^{\mathrm{ref}}},
    \label{e:NewPW23}
  \end{align}} 
  where $\gamma_{\infty}$ is any figure-of-eight loop as illustrated in Figure \ref{fig:PWApp01}, right. It is oriented such that it bypasses  $\Phi_-(\x)$ in the positive direction, and  $\Phi_+(\x)$ in the negative direction. Since the integrand in \eqref{e:NewPW23} is branching as $\gamma_{\infty}$ winds around $\Phi_{\pm}(\x)$, we need to specify the branch of plane waves in \eqref{e:NewPW23}. From the discussion of Appendix \ref{App:New01}, we find that at the point $p_0$, shown in Figure \ref{fig:PWApp01}, right, the principal branch introduced in Definition \ref{def:planewavesbranch} is taken. Note that specifying the location of $p_0$ as shown in Figure \ref{fig:PWApp01} determines that the (oriented) segment of $\gamma_{\infty}$ that starts close to $\Phi_+(\x)$ and ends close to $\Phi_-(\x)$ lies on this principal branch.  
\end{proposition}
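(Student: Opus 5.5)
The plan is to derive \eqref{e:NewPW23} from the two representations of Proposition~\ref{lem:PWNew01} by contour deformation on $\Xi\simeq\widehat{\mathbb{C}}$, keeping careful track of branches. Write
\[
F_1(p)=w_1(\y,p;\x^{\rm ref})\,w_2(\x,p;-\x^{\rm ref})\,\Omega_{\x^{\rm ref}},\qquad F_2(p)=w_2(\y,p;\x^{\rm ref})\,w_1(\x,p;-\x^{\rm ref})\,\Omega_{\x^{\rm ref}} .
\]
By \eqref{e:New01} and \eqref{e:New02}, $G=\mathcal{B}_\lambda\int_\gamma F_1=\mathcal{B}_\lambda\int_\gamma F_2$. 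I would therefore aim to show
\begin{equation}
\int_{\gamma_\infty}F_1=\int_{\gamma}F_1+\int_\gamma F_2 \qquad\mbox{and}\qquad \int_{\gamma_\infty}F_2=-\int_{\gamma}F_1-\int_\gamma F_2 ,
\label{e:plan-fig8}
\end{equation}
or an equivalent split. Subtracting these gives $\int_{\gamma_\infty}(F_1-F_2)=2G/\mathcal{B}_\lambda$, which is exactly \eqref{e:NewPW23}, including the factor $\tfrac12$.

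The first step is to record the singularity structure of $F_1$ and $F_2$ on $\Xi$. Using the parametrisation \eqref{e:ParamX} with $\x$ replaced by $\x^{\rm ref}$, we have $\Omega_{\x^{\rm ref}}=d\beta$. The factors $(\x^{\rm ref}\cdot\bdeta)$ coming from the two reference denominators combine with total exponent $(1+\lambda)-\lambda=1$, since $-\x^{\rm ref}\cdot\bdeta$ and $\x^{\rm ref}\cdot\bdeta$ vanish at the same points. So, after normalisation, there is no branching at $\Phi_\pm(\x^{\rm ref})$; this is presumably why the hypothesis $\x\neq\pm\x^{\rm ref}$ is needed. The only genuine branch points are then $\Phi_\pm(\y)$, with exponents $-(1+\lambda)$ or $\lambda$, and $\Phi_\pm(\x)$, with the complementary exponents. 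Here $\x\ne\pm\y$ guarantees four distinct points.

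The second step is to check that near $\Phi_+(\y)$ and $\Phi_-(\y)$ the forms $F_1$ and $F_2$ have controlled behaviour in the local coordinates $\tau_\pm=e^{\pm i\beta}$, using Remark~\ref{rem:4.4}. At the $\y$-points, one of $F_1,F_2$ decays exponentially in $\beta$ and the other grows, according to $\mathrm{Re}\,\lambda$. The aim is to slide the relevant arcs of $\gamma$ through $\Phi_\pm(\y)$ on the side where the integrand is integrable, so that $\gamma$ contracts onto the $\x$-points alone.

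Concretely, I would cut $\Xi$ along an arc joining $\Phi_+(\x)$ to $\Phi_-(\x)$ and another joining $\Phi_+(\y)$ to $\Phi_-(\y)$; Lemma~\ref{lem:defgammaonXi} ensures that $\gamma$ crosses each cut once. I would then deform $\gamma$ off the $\y$-cut, collecting the jump across it, and collapse what remains onto a figure-of-eight around the $\x$-cut. The jumps across the $\y$-cut are of the form $(e^{\pm 2\pi i\lambda}-1)$ times the original integrand. Combining the two representations should make them produce exactly the second $\gamma$-integral in \eqref{e:plan-fig8}, using $\sin(\pi\lambda)\neq0$ to divide by them. The orientation, positive around $\Phi_-(\x)$ and negative around $\Phi_+(\x)$, and the branch at $p_0$ are then read off from the orientation of $\gamma$ in Proposition~\ref{lem:PWNew01}, namely $\Phi_-(\x)$ on its left and $\Phi_+(\x)$ on its right.

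The main obstacle is this bookkeeping of branches and monodromy factors. I need to verify that the $\y$-contributions cancel exactly in the antisymmetric combination $F_1-F_2$ rather than leaving a multiple of $G$ with the wrong coefficient. I also need to verify that the segment of $\gamma_\infty$ from near $\Phi_+(\x)$ to near $\Phi_-(\x)$ carries the principal branch of Definition~\ref{def:planewavesbranch}.

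If the direct deformation becomes unwieldy, I would try a fallback. First verify \eqref{e:NewPW23} in the special configuration $\x^{\rm ref}=\xNP$, with $\x$ and $\y$ on a meridian, by explicit $\beta$-integration, obtaining Legendre integrals of Laplace type that yield $P_\lambda(-\x\cdot\y)/(4\sin\pi\lambda)$. Then extend to general positions by $\SO(3)$-invariance and analytic continuation in $\y$. This is legitimate because $\gamma_\infty$ can be kept fixed while $\y$ moves, as long as $\Phi_\pm(\y)$ stay away from it.
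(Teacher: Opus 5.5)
The gap is in the target identities themselves. Even formally they do not give the claim: since $\int_\gamma F_1=\int_\gamma F_2=G/\mathcal{B}_\lambda$, your two identities give $\int_{\gamma_\infty}(F_1-F_2)=2\int_\gamma(F_1+F_2)=4G/\mathcal{B}_\lambda$, not $2G/\mathcal{B}_\lambda$.

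Worse, neither identity is true. No split in which each figure-eight integral is a fixed combination of $\int_\gamma F_1$ and $\int_\gamma F_2$ can hold. Let $\lambda\to0$, where nothing branches. Then Definition~\ref{def:planewavesbranch} and \eqref{e:New09} give $F_1\to i\,\Omega_{\y}$ and $F_2\to -i\,\Omega_{\x}$. So $F_1$ becomes holomorphic at $\Phi_\pm(\x)$ and $\int_{\gamma_\infty}F_1\to 0$, while $\int_\gamma(F_1+F_2)\to 4\pi i$. As $\lambda\to-1$ the roles swap: $F_1\to -i\,\Omega_{\x}$, $F_2\to i\,\Omega_{\y}$, and $\int_{\gamma_\infty}F_2\to0$. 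Neither figure-eight integral is tied to $G$ on its own; only a combination is.

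This is also why your deformation cannot work. You cannot slide a contour through the branch points $\Phi_\pm(\y)$; integrability there only lets you collapse onto them. So $\gamma$ never contracts onto the $\x$-points alone, and the leftover figure-eight $\hat\gamma_\infty$ around $\Phi_\pm(\y)$ has a generally nonzero integral.

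Two smaller slips feed into this:
\begin{itemize}
\item In the coordinate where $\Omega_{\x^{\mathrm{ref}}}=d\beta$, the points $\beta=\pm i\infty$ are $\Phi_\pm(\x^{\mathrm{ref}})$, where you yourself showed nothing is singular. They are not $\Phi_\pm(\y)$.
\item Cuts joining $\Phi_+(\x)$ to $\Phi_-(\x)$ and $\Phi_+(\y)$ to $\Phi_-(\y)$ do not give a single-valued branch, because the loop separating them has monodromy $e^{\pm 4\pi i\lambda}$. Admissible cuts pair an $\x$-point with a $\y$-point, as in Figure~\ref{fig:PWApp01}.
\end{itemize}

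What you are missing is a mechanism that converts the $\y$-remainder into a $\gamma_\infty$-integral of the \emph{other} form. The paper proceeds in three steps:
\begin{itemize}
\item It writes $G$ with reference $\y$, by letting $\x^{\mathrm{ref}}\to\y$ in \eqref{e:New01}.
\item It deforms $2\gamma$ into $\gamma_\infty+\hat\gamma_\infty$ (Lemma~\ref{lem:NewA2}).
\item It uses the M\"obius involution $\Upsilon$ of $\Xi$ exchanging $\Phi_\mp(\y)$ with $\Phi_\pm(\x)$. This carries $\hat\gamma_\infty$ onto $\gamma_\infty$ and pulls $w_j(\x,p;-\y)\,\Omega_{\y}$ back to a multiple of $w_j(\y,p;-\x)\,\Omega_{\x}$.
\end{itemize}
That symmetry is what produces the second term. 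Neither the equality $\int_\gamma F_1=\int_\gamma F_2$ nor jump factors $e^{\pm2\pi i\lambda}-1$ can do it. The identities \eqref{eq:D13fromPartA} and \eqref{e:New09} then move the reference to a general $\x^{\mathrm{ref}}$. You need this step too: \eqref{e:New01}--\eqref{e:New02} require $\x^{\mathrm{ref}}$ on the same side of $\Gamma$ as $\y$, which the statement does not assume.

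Whatever route you take, check the final sign against these integer limits. For $\x^{\mathrm{ref}}$ as in \eqref{e:New01}, the statement amounts to $\int_{\gamma_\infty}(F_1-F_2)=2\int_\gamma F_1$, an identity between functions analytic in $\lambda$. With the stated orientations, $\int_{\gamma_\infty}\Omega_{\x}=-4\pi$ and $\int_\gamma\Omega_{\y}=2\pi$. So as $\lambda\to0$ the left side tends to $-4\pi i$ and the right side to $4\pi i$, whereas as $\lambda\to-1$ both tend to $4\pi i$.

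Structurally, $\lambda\mapsto-1-\lambda$ swaps $w_1\leftrightarrow w_2$, hence $F_1\leftrightarrow F_2$, while fixing $G$ and $\mathcal{B}_\lambda$. So the consistent combination is $F_1+F_2$. In the paper's argument the sign appears to be lost at the $\Upsilon^{\star}$ step. Pull-back preserves residues, and $\Upsilon$ exchanges $\Phi_+(\y)$ (residue $-i$ of $\Omega_{\y}$) with $\Phi_-(\x)$ (residue $+i$ of $\Omega_{\x}$). Hence already $\Upsilon^{\star}\Omega_{\y}=-\Omega_{\x}$.
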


\begin{figure}[h!]
    \centering
    \includegraphics[width=\linewidth]{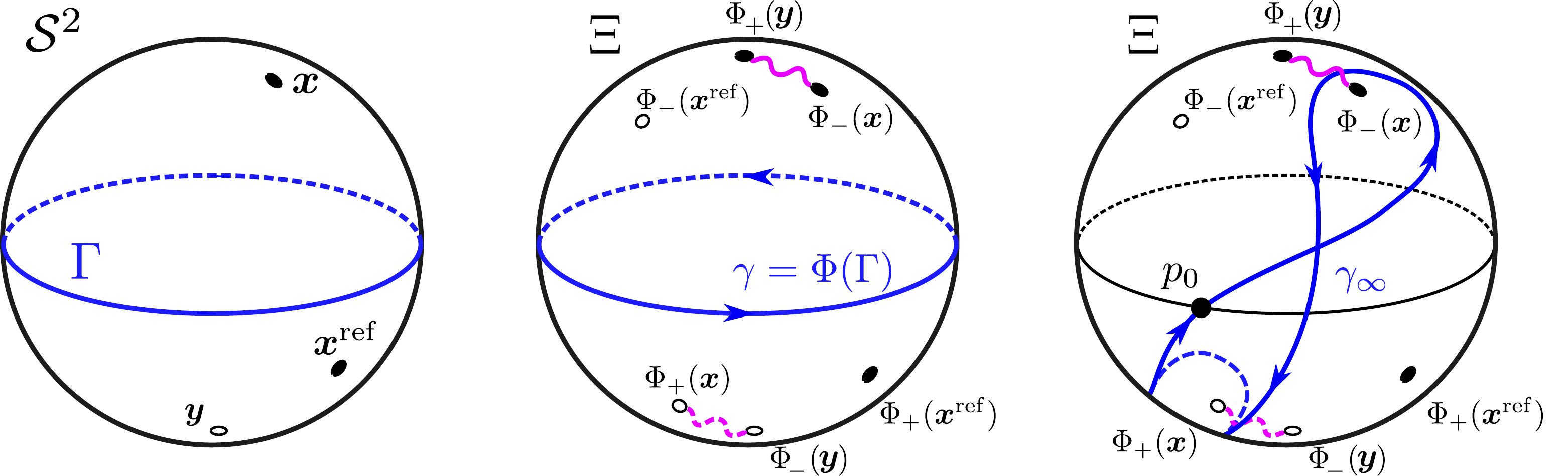}
    \caption{\small Illustration of a great circle $\Gamma \subset \mathcal{S}^2$ (left),  the associated contour $\gamma \subset \Xi$ (middle) and a typical contour $\gamma_\infty\subset\Xi$ (right). We also see branch cuts of the plane-waves, connecting $\Phi_+(\y)$ to $\Phi_-(\x)$ and $\Phi_+(\x)$ to $\Phi_-(\y)$, as explained in \citep{partAcomplexSphere}.}
    \label{fig:PWApp01}
\end{figure}

It is interesting to note that the conditions of validity of \eqref{e:NewPW23} ($\x\neq\pm\y$)  seem more restrictive than those of \eqref{e:New01} and \eqref{e:New02} ($\x\neq\y$), but that there are less restrictions on the choice of $\x^{\rm ref}$ for \eqref{e:NewPW23} than there are for \eqref{e:New01} and \eqref{e:New02}. Moreover, we see at the end of Section \ref{sec:PWNew02} that the third plane-wave representation has some other clear advantages. 

Finally, note that the contours $\gamma$ and $\gamma_\infty$ introduced in this section do implicitly depend on $\x$, $\y$ and $\x^{\text{ref}}$. 

We will now show that similar plane-wave representations can be derived for the scattered field $u$.

\section{Plane wave representation of the scattered field $u$} \label{sec:PWNew02}

In this Section, we will use the various plane-wave representations of $G(\x;\y)$ given within \eqref{e:New01}, \eqref{e:New02}, and \eqref{e:NewPW23} together with Green's third identity in order to obtain three different plane-wave representations of the sought-after scattered wave-field $u(\x)$ introduced in Section \ref{sec:pbformulationsphere} and solution to the boundary value problem \eqref{e:HLB}-\eqref{e:ID}. Throughout this section, we assume that the spherical cap $\mathcal{C}$ and the scatterer $\mathbb{S}$ fully contained within $\mathcal{C}$ are as defined in Section \ref{sec:pbformulationsphere}. We start by the following Lemma, which is an important yet direct consequence from Proposition \ref{lem:PWNew01}.



\begin{lemma}\label{lem:PWNew1.5}
    Assume that $\x\in\S^2\setminus\mathcal{C}$. Then there exists a great circle $\Gamma$ of $\S^2$ that separates $\x$ from $\mathcal{C}$. Moreover, 
    \begin{align}
        \forall \y\in\partial\mathbb{S}, \ \forall \x^{\mathrm{ref}}\in\mathbb{S}, \   G (\boldsymbol{x} ; \boldsymbol{y}) &= \mathcal{B}_{\lambda}
  \int_{\gamma} w_2  (\boldsymbol{x}, p ; - \boldsymbol{x}^{\mathrm{ref}}) w_1
  (\boldsymbol{y}, p ; \boldsymbol{x}^{\mathrm{ref}})
  \Omega_{\boldsymbol{x}^{\mathrm{ref}}}, \label{eq:first_GreenLem}
   \\
  \forall \y\in\partial\mathbb{S}, \ \forall \x^{\mathrm{ref}}\in\mathbb{S}, \ G (\boldsymbol{x} ; \boldsymbol{y}) &= \mathcal{B}_{\lambda}
     \int_{\gamma} w_1 (\boldsymbol{x}, p ; -
     \boldsymbol{x}^{\mathrm{ref}}) w_2 (\boldsymbol{y}, p ;
     \boldsymbol{x}^{\mathrm{ref}}) \Omega_{\boldsymbol{x}^{\mathrm{ref}}}, \label{eq:second_GreenLem}
    \end{align}
    where $\gamma=\Phi(\Gamma)$ is oriented such that  $\Phi_-(\x)$ and $\Phi_+\left (\partial \mathbb{S}\right )$ lie to its left and is completely independent from $\y$ or $\x^{\mathrm{ref}}$. This is illustrated in Figure \ref{fig:PWRep01}, right. 
\end{lemma}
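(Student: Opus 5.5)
The plan has two steps. First, construct a single great circle $\Gamma$ that works for every $\y$ and every $\x^{\mathrm{ref}}$ at once. Second, check that Proposition \ref{lem:PWNew01} applies with that same $\Gamma$ for every such pair, and that the resulting contour carries the stated orientation.

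\textbf{Constructing $\Gamma$.} Let the cap be $\mathcal{C}=\{\y\in\S^2:\y\cdot\xNP>\cos\theta_0\}$ (or its closure), where $\theta_0<\pi/2$ because $\mathcal{C}$ is strictly smaller than the Northern hemisphere. Write $\x=(\sin\theta\cos\varphi,\sin\theta\sin\varphi,\cos\theta)$. Since $\x\notin\mathcal{C}$, we have $\theta\ge\theta_0$.

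If $\theta\ge\pi/2$, take $\Gamma$ to be a tilted great circle: the plane through the origin that passes strictly between the closed cap and $\x$. If $\x$ lies strictly in the Southern hemisphere, the equator $\{x_3=0\}$ already separates it from $\mathcal{C}$.

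In general, I would look for a unit normal $\n$ with $\n\cdot\y>0$ for all $\y\in\overline{\mathcal{C}}$ and $\n\cdot\x<0$. A natural choice is
\[
\n=\cos\alpha\,\xNP-\sin\alpha\,\boldsymbol{e}_\varphi ,
\]
where $\boldsymbol{e}_\varphi=(\cos\varphi,\sin\varphi,0)$ and $\alpha$ is chosen with $\theta_0+\alpha<\pi/2$ and $\theta+\alpha>\pi/2$. Then $\n$ is tilted away from $\x$ in its meridian plane, and $\n\cdot\x=\cos(\theta+\alpha)<0$. For $\y\in\overline{\mathcal{C}}$, the angle between $\y$ and $\n$ is at most $\theta_0+\alpha<\pi/2$, so $\n\cdot\y>0$. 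Such an $\alpha$ exists because $\theta\ge\theta_0$ and we need $\alpha\in(\pi/2-\theta,\,\pi/2-\theta_0)$. This interval is nonempty when $\theta>\theta_0$.

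The boundary case $\theta=\theta_0$ is where I would be careful. If $\mathcal{C}$ is open and $\x$ lies on its boundary circle, the interval is empty. In that case I would argue that a compact scatterer $\mathbb{S}$ sits in a slightly smaller closed cap, so the argument is rerun with $\theta_0$ replaced by $\theta_0-\epsilon$. Alternatively, I would read $\mathcal{C}$ as closed and $\x\notin\mathcal{C}$, which forces $\theta>\theta_0$.

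The great circle is then $\Gamma=\{\z:\z\cdot\n=0\}$. It leaves $\overline{\mathcal{C}}\supset\mathbb{S}\cup\partial\mathbb{S}$ on one side and $\x$ on the other.

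\textbf{Applying Proposition \ref{lem:PWNew01}.} Fix any $\y\in\partial\mathbb{S}$ and $\x^{\mathrm{ref}}\in\mathbb{S}$. Then $\y\ne\x$, $\Gamma$ separates $\x$ from $\y$, and $\x^{\mathrm{ref}}$ lies on the same side of $\Gamma$ as $\y$. So Proposition \ref{lem:PWNew01} applies with $\gamma=\Phi(\Gamma)$ and gives \eqref{eq:first_GreenLem} and \eqref{eq:second_GreenLem}. The contour $\gamma=\Phi(\Gamma)$ depends only on $\Gamma$, not on $\y$ or $\x^{\mathrm{ref}}$, by property 4 of the maps $\Phi_\pm$.

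\textbf{Orientation.} Proposition \ref{lem:PWNew01} requires $\Phi_+(\y)$ and $\Phi_-(\x)$ to lie to the left of $\gamma$. The side of $\Phi(\Gamma)$ containing $\Phi_+(\y)$ is the same for all $\y$ on the $\mathcal{C}$-side of $\Gamma$, because of two facts:
\begin{itemize}
\item $\Phi_+$ is a diffeomorphism (property 3), so it maps the connected hemisphere containing $\partial\mathbb{S}$ onto one connected component of $\Xi\setminus\gamma$;
\item by Lemma \ref{lem:defgammaonXi}, that component is the one containing $\Phi_-(\x)$.
\end{itemize}
Hence a single orientation of $\gamma$ works uniformly, namely the one with $\Phi_-(\x)$ and $\Phi_+(\partial\mathbb{S})$ on its left.

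The main (minor) obstacle is the existence of the separating great circle, specifically its boundary case. The rest is a direct invocation of Proposition \ref{lem:PWNew01}.
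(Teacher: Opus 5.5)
Your proposal is correct and follows essentially the same route as the paper. The paper's great circle through $\hat{\x}$ (polar angle $\tfrac12(\theta_{\mathcal{C}}+\theta_{\x})$) and through the horizontal axis orthogonal to the meridian of $\x$ is exactly your tilted circle with the midpoint choice $\alpha=\tfrac{\pi}{2}-\tfrac12(\theta_{\mathcal{C}}+\theta_{\x})$, after which both arguments invoke Proposition~\ref{lem:PWNew01} uniformly in $\y$ and $\x^{\mathrm{ref}}$. Your orientation argument via Lemma~\ref{lem:defgammaonXi} and your treatment of the boundary case $\theta=\theta_0$ add detail the paper omits; also choose $\alpha\ge 0$, since your bound $\theta_0+\alpha$ on the angle between $\y$ and $\n$ only holds then.
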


\begin{proof} The existence of such a great circle $\Gamma$ is a purely geometric result. Because $\mathcal{C}$ is fully contained within the Northern hemisphere,  if $\x$ lies on the Southern hemisphere, we can take $\Gamma$ to be the equator. Let us now give a suitable great circle when $\x$ is in the Northern hemisphere. Denote by $(\theta_{\x},\varphi_{\x})$  the spherical coordinates of $\x$, and let $\theta_{\mathcal{C}}$ be the polar angle associated with $\partial \mathcal{C}$. Let $\hat{\x}$ denote the point with spherical coordinates ${(\tfrac{1}{2}(\theta_{\mathcal{C}}+\theta_{\x}),\varphi_{\x})}$. Within the $x_3 \equiv 0$ plane, consider the line $L$ obtained by intersecting the plane orthogonal to $\hat{\x}$ with the $x_3 \equiv 0$ plane.
Then, we  can take $\Gamma$ to be the unique great circle passing through the point $\hat{\x}$ and through any of two points where $L$ intersects $\mathcal{S}^2$, as illustrated in Figure \ref{fig:PWRep01}, right.
This existence result is crucial and one of the main reason for introducing $\mathcal{C}$ in the first place.

 To conclude the proof, note that for all $\y\in\partial \mathbb{S}$ and all $\x^{\mathrm{ref}}\in\mathbb{S}$, the great circle  $\Gamma$ does separate $\x$ from both $\y$ and $\x^{\mathrm{ref}}$ and therefore the result of Proposition \ref{lem:PWNew01} can be applied.
\end{proof}
Now,  according to Green's third identity \eqref{e:Greens3rdSphere}, by choosing $\tilde \Gamma=\partial\mathbb{S}$, for any  $\x\in\S^2\setminus\mathcal{C}$, we have 
 \begin{equation}
   u (\boldsymbol{x}) = \int_{\partial \mathbb{S}} \left[ u (\boldsymbol{y}) \partial_{\boldsymbol{n}} G
   (\boldsymbol{x} ; \boldsymbol{y}) - G (\boldsymbol{x} ; \boldsymbol{y})
   \partial_{\boldsymbol{n}} u (\boldsymbol{y})\right] d\ell.\label{e:New002}
 \end{equation}

This will allow us to derive the following plane-wave representations for $u$. 

\begin{proposition}\label{prop:5.2}
    For any $\x\in\S^2\setminus\mathcal{C}$ and any $\x^{\mathrm{ref}}\in\mathbb{S}$, we have
    \begin{align}
  u (\boldsymbol{x}) &= \mathcal{B}_{\lambda} \int_{\gamma}
  \mathcal{D}_1 (p ; \boldsymbol{x }^{\mathrm{ref}}) w_2 (\boldsymbol{x}, p ; -
  \boldsymbol{x }^{\mathrm{ref}}) \Omega_{\boldsymbol{x }^{\mathrm{ref}}}, 
  \label{e:New13} \\
  u (\boldsymbol{x}) &= \mathcal{B}_{\lambda} \int_{\gamma}
   \mathcal{D}_2 (p ; \boldsymbol{x }^{\mathrm{ref}}) w_1 (\boldsymbol{x}, p ; -
  \boldsymbol{x }^{\mathrm{ref}}) \Omega_{\boldsymbol{x }^{\mathrm{ref}}}, 
  \label{e:New014}
\end{align}
where $\gamma$ is defined as in Lemma \ref{lem:PWNew1.5} and the \emph{directivities} $ \mathcal{D}_1$ and $ \mathcal{D}_2$ are given by
\begin{align}
   \mathcal{D}_1 (p ; \boldsymbol{x }^{\mathrm{ref}}) &= \int_{\partial \mathbb{S}} \left[u (\boldsymbol{y})
  \partial_{\boldsymbol{n}} w_1 (\boldsymbol{y}, p ; \boldsymbol{x }^{\mathrm{ref}}) -
  w_1 (\boldsymbol{y}, p ; \boldsymbol{x }^{\mathrm{ref}}) \partial_{\boldsymbol{n}} u
  (\boldsymbol{y})\right] d\ell,  \label{e:New14} \\
    \mathcal{D}_2 (p ; \boldsymbol{x }^{\mathrm{ref}}) &= \int_{\partial \mathbb{S}} \left[u (\boldsymbol{y})
  \partial_{\boldsymbol{n}} w_2 (\boldsymbol{y}, p ; \boldsymbol{x }^{\mathrm{ref}}) -
  w_2 (\boldsymbol{y}, p ; \boldsymbol{x }^{\mathrm{ref}}) \partial_{\boldsymbol{n}} u
  (\boldsymbol{y})\right] d\ell. \label{e:New18}
\end{align}
\end{proposition}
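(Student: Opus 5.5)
The plan is to substitute the plane-wave representations of Lemma \ref{lem:PWNew1.5} into Green's third identity \eqref{e:New002} and then exchange the order of integration. Fix $\x\in\S^2\setminus\mathcal{C}$ and $\x^{\mathrm{ref}}\in\mathbb{S}$. Take the great circle $\Gamma$ given by Lemma \ref{lem:PWNew1.5} and set $\gamma=\Phi(\Gamma)$. The key point of that lemma is that $\gamma$ is the same contour for every $\y\in\partial\mathbb{S}$. For \eqref{e:New13} I use \eqref{eq:first_GreenLem}; for \eqref{e:New014} I use \eqref{eq:second_GreenLem}. The two cases are identical with $w_1$ and $w_2$ swapped, so I only describe the first.

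\textbf{Step 1: normal derivative in $\y$.} In \eqref{e:New002} we need $\partial_{\n}G(\x;\y)$, taken with respect to $\y$. In \eqref{eq:first_GreenLem} the variable $\y$ appears only in the factor $w_1(\y,p;\x^{\mathrm{ref}})$. So we want
\[
\partial_{\n}G(\x;\y)=\mathcal{B}_\lambda\int_\gamma w_2(\x,p;-\x^{\mathrm{ref}})\,\partial_{\n}w_1(\y,p;\x^{\mathrm{ref}})\,\Omega_{\x^{\mathrm{ref}}}.
\]
This is justified by differentiating under the integral sign. The contour $\gamma$ is compact and is fixed in a neighbourhood of $\partial\mathbb{S}$, because $\Gamma$ separates $\x$ from all of $\mathcal{C}$. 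The singularities of $w_1(\y,\cdot\,;\x^{\mathrm{ref}})$ on $\Xi$ are at $\Phi_\pm(\y)$ and $\Phi_\pm(\x^{\mathrm{ref}})$, and these stay a positive distance from $\gamma$ as $\y$ ranges over a neighbourhood of $\partial\mathbb{S}$ inside $\mathcal{C}$. By Section \ref{sec:singplanewaves}, the $\y$-gradient of the plane waves has the same singularity structure. Hence the integrand and its $\y$-derivative are jointly continuous, and therefore bounded, on $\gamma\times\partial\mathbb{S}$.

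A subtle point is that $\gamma$ passes through $\beta=\pm i\infty$ only if it meets $\Phi_\pm(\x^{\mathrm{ref}})$, where $\Omega_{\x^{\mathrm{ref}}}$ has its poles. Lemma \ref{lem:defgammaonXi} rules this out, since $\gamma$ separates the points $\Phi_\pm(\x^{\mathrm{ref}})$ (with $\x^{\mathrm{ref}}$ on the $\y$ side of $\Gamma$). So the 1-form is smooth on $\gamma$.

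\textbf{Step 2: Fubini.} Insert both representations into \eqref{e:New002}. This gives a double integral over $\partial\mathbb{S}\times\gamma$. By Step 1 the integrand is bounded, since $u$ and $\partial_{\n}u$ are integrable on $\partial\mathbb{S}$. Therefore Fubini's theorem lets us swap the order of integration:
\[
u(\x)=\mathcal{B}_\lambda\int_\gamma w_2(\x,p;-\x^{\mathrm{ref}})\left(\int_{\partial\mathbb{S}}\left[u\,\partial_{\n}w_1-w_1\,\partial_{\n}u\right]d\ell\right)\Omega_{\x^{\mathrm{ref}}}.
\]
The inner integral is exactly $\mathcal{D}_1(p;\x^{\mathrm{ref}})$ as defined in \eqref{e:New14}, which proves \eqref{e:New13}.

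\textbf{Expected main obstacle.} The hardest part is making the integrability rigorous near the non-smooth points of $\partial\mathbb{S}$. There, Meixner's condition allows $\partial_{\n}u$ to blow up, although it remains integrable. The bound $|\partial_{\n}u|\in L^1(\partial\mathbb{S})$, combined with the uniform boundedness of the plane-wave factors from Step 1, should suffice. If this is delicate, the fallback is to apply Green's identity on a smooth contour $\tilde\Gamma$ slightly outside $\partial\mathbb{S}$ but still inside $\mathcal{C}$. The argument then goes through verbatim, and a Stokes-type deformation argument (both integrands are LBE solutions between the two contours) shows that the resulting directivity coincides with \eqref{e:New14}.

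A second check is branch consistency. The branch of $w_1(\y,p;\x^{\mathrm{ref}})$ used in the definition of $\mathcal{D}_1$ must match the one in \eqref{eq:first_GreenLem}. This holds because, by Lemma \ref{lem:defgammaonXi}, the branch cuts join $\Phi_+(\y)$ to $\Phi_-(\x)$ and $\Phi_+(\x)$ to $\Phi_-(\y)$, and these can be chosen to avoid $\gamma$ uniformly in $\y\in\partial\mathbb{S}$.
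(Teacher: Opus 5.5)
Your proposal is correct and follows the paper's own proof. You insert \eqref{eq:first_GreenLem} (resp.\ \eqref{eq:second_GreenLem}) into \eqref{e:New002}, use that $\gamma$ is the same contour for every $\y\in\partial\mathbb{S}$ to swap the order of integration, and pull $w_2(\x,p;-\x^{\mathrm{ref}})$ (resp.\ $w_1$) out of the inner integral to obtain $\mathcal{D}_1$ (resp.\ $\mathcal{D}_2$). Your additional justifications only make explicit what the paper leaves implicit: differentiating under the integral sign, the uniform distance of $\Phi_\pm(\partial\mathbb{S})$ and $\Phi_\pm(\x^{\mathrm{ref}})$ from $\gamma$, and Fubini with $\partial_{\n}u\in L^1(\partial\mathbb{S})$ under the Meixner condition.
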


\begin{proof}
    Using \eqref{eq:first_GreenLem} within \eqref{e:New002}, we obtain
    \begin{equation*}
  u (\boldsymbol{x}) = \mathcal{B}_{\lambda} \int_{\partial \mathbb{S}}
  \left[ \int_{\gamma}  \left \{(u (\boldsymbol{y}) \partial_{\boldsymbol{n}} w_1
  (\boldsymbol{y}, p ; \boldsymbol{x }^{\mathrm{ref}}) - w_1 (\boldsymbol{y}, p ;
  \boldsymbol{x }^{\mathrm{ref}}) \partial_{\boldsymbol{n}} u (\boldsymbol{y})\right \} w_2
  (\boldsymbol{x}, p ; \boldsymbol{-} \boldsymbol{x}^{\mathrm{ref}}) \Omega_{\boldsymbol{x
  }^{\mathrm{ref}}}\right] d\ell \label{e:New12}.
\end{equation*}
Now, since $\gamma$ is the exact same contour for all possible $\y\in\partial \mathbb{S}$, we can swap the order of integration to get
\begin{equation*}
  u (\boldsymbol{x}) = \mathcal{B}_{\lambda} \int_{\gamma} 
  \left\{ \int_{\partial \mathbb{S}}  \left [(u (\boldsymbol{y}) \partial_{\boldsymbol{n}} w_1
  (\boldsymbol{y}, p ; \boldsymbol{x }^{\mathrm{ref}}) - w_1 (\boldsymbol{y}, p ;
  \boldsymbol{x }^{\mathrm{ref}}) \partial_{\boldsymbol{n}} u (\boldsymbol{y})\right ] w_2
  (\boldsymbol{x}, p ; \boldsymbol{-} \boldsymbol{x}^{\mathrm{ref}}) d\ell \right\}  \Omega_{\boldsymbol{x
  }^{\mathrm{ref}}}\label{e:New12}.
\end{equation*}
Since $ w_2
  (\boldsymbol{x}, p ; \boldsymbol{-} \boldsymbol{x}^{\mathrm{ref}})$ does not depend on $\y$ we can take it out of the inner integral to obtain \eqref{e:New13}. The plane-wave representation \eqref{e:New18} is obtained in the exact same way but starting from \eqref{eq:second_GreenLem} instead of \eqref{eq:first_GreenLem}.
\end{proof}


\begin{figure}[h!]
    \centering
    \includegraphics[width=\linewidth]{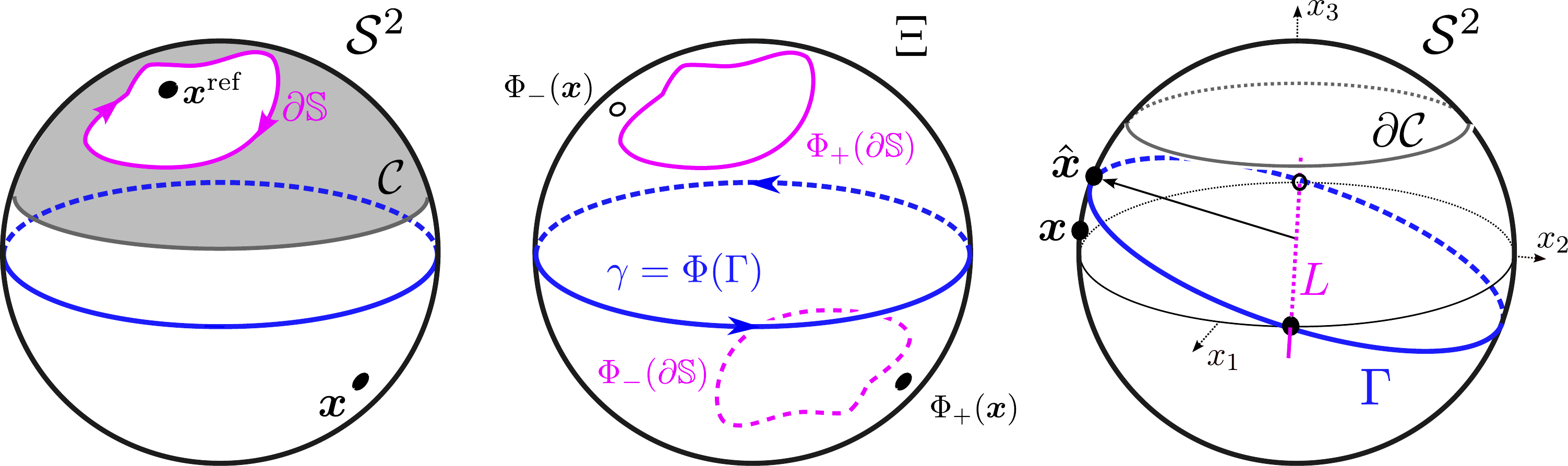}
    \caption{\small Left: Contours $\Gamma$ and $\partial \mathbb{S}$. We also see that $\Gamma$ separates $\x$ and $\mathcal{C}$ (and therefore $\partial \mathbb{S}$), and that $\x^{\mathrm{ref}}$ lies within $\mathbb{S}$, as required for the validity of formula \eqref{e:New12}. Middle: Contours $\gamma = \Phi(\Gamma)$, $\Phi_{\pm}(\partial{\mathbb{S}})$ as well as the points $\Phi_{\pm}(\x)$ on $\Xi$. Right: Tilted contour $\Gamma$ used in the proof of Lemma \ref{lem:PWNew1.5} when $\x$ is on the northern hemisphere.} 
    \label{fig:PWRep01}
\end{figure}

The directivities introduced within \eqref{e:New14} and \eqref{e:New18} are analogous to those encountered in the planar setting in the motivational Section \ref{sec:motivation}, as we will explain in Section \ref{sec:analogies}.
 
\begin{remark}\label{rem:PWNew2.3}
     It can be checked that the integrands in \eqref{e:New14} and \eqref{e:New18} are singular on $\mathcal{S}^2$ \textit{only if} $\Psi_+(p)$ or $\Psi_-(p)$ hits the contour $\partial \mathbb{S}$ of integration. This is because, for ${\x^{\mathrm{ref}} \in \mathcal{S}^2}$ fixed, $\Psi_{\pm}(p)$ describe the locations of the singularities of $w_2(\y,p; \x^{\mathrm{ref}})$ and its normal derivative on $\mathcal{S}^2$. However, as we can see from Figure \ref{fig:PWRep01}, the contours $\partial \mathbb{S}$ and $\gamma=\Phi(\Gamma)$ are chosen such that $\Gamma \cap \partial \mathbb{S} = \emptyset$. So, the directivities given in \eqref{e:New14} and \eqref{e:New18} are indeed well-defined for $p \in \gamma$. Note further that the directivities can be seen as integrals that depend on the parameter $p$. Since the integrand is analytic in $p$ (at least in the neighbourhood of $\gamma$), then these quantities can be analytically continued off $\gamma$ as long as $p$ does not hit any of the singularities of $w_{1,2}$ and their normal derivatives.
\end{remark}

Our next goal is to extend the region of validity of the plane-wave representations given in \eqref{e:New13} and \eqref{e:New014} from $\mathcal{S}^2 \setminus \mathcal{C}$ onto all of $\mathcal{S}^2 \setminus \mathbb{S}$. This is achieved with the following Lemma. 

\begin{lemma} \label{cor:New02} Consider again that $\x^{\mathrm{ref}}$ is in $\mathbb{S}$ but that $\x$ is now in $\S^2\setminus\mathbb{S}$. There exists a contour
  $\tilde \gamma$ on $\Xi$ such that $u(\x)$ can be written as either of the following two
  plane-wave representations
  \begin{equation}
    u (\boldsymbol{x}) = \mathcal{B}_{\lambda} \int_{\tilde \gamma} \mathcal{D}_2 (p ; \boldsymbol{x }^{\mathrm{ref}}) w_1
    (\boldsymbol{x}, p ; - \boldsymbol{x }^{\mathrm{ref}}) \Omega_{\boldsymbol{x
    }^{\mathrm{ref}}}, \quad \x \in \mathcal{S}^2 \setminus \mathbb{S} \label{e:New16}
  \end{equation}
  \begin{equation}
    u (\boldsymbol{x}) = \mathcal{B}_{\lambda} \int_{\tilde \gamma} \mathcal{D}_1 (p ; \boldsymbol{x }^{\mathrm{ref}}) w_2
    (\boldsymbol{x}, p ; - \boldsymbol{x }^{\mathrm{ref}}) \Omega_{\boldsymbol{x
    }^{\mathrm{ref}}}, \quad  \x \in \mathcal{S}^2 \setminus \mathbb{S},  \label{e:New17}
  \end{equation}
  where the directivities  $\mathcal{D}_1$ and $\mathcal{D}_2$ are still given by \eqref{e:New14} and \eqref{e:New18}.
\end{lemma}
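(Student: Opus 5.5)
The plan is to rerun the proof of Proposition \ref{prop:5.2}, but with the scatterer boundary replaced by a contour that hugs $\mathbb{S}$ more tightly than the cap $\mathcal{C}$ does. Fix $\x\in\S^2\setminus\mathbb{S}$ and $\x^{\mathrm{ref}}\in\mathbb{S}$. If $\x\notin\mathcal{C}$ we simply take $\tilde\gamma=\gamma$ from Lemma \ref{lem:PWNew1.5}, so only $\x\in\mathcal{C}\setminus\mathbb{S}$ needs work.

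\textbf{Step 1: reduce to Proposition \ref{lem:PWNew01}, one $\y$ at a time.} The Green identity \eqref{e:New002} still holds with $\tilde\Gamma=\partial\mathbb{S}$, since $\x$ is outside $\mathbb{S}$. For each fixed $\y\in\partial\mathbb{S}$ with $\y\neq\x$, Proposition \ref{lem:PWNew01} represents $G(\x;\y)$ as an integral over $\Phi(\Gamma_{\y})$. Here $\Gamma_{\y}$ is a great circle separating $\x$ from $\y$ and $\x^{\mathrm{ref}}$. The difficulty is that no single great circle works for all $\y$ at once.

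\textbf{Step 2: replace the family of contours by one $\tilde\gamma$, using Lemma \ref{lem:defgammaonXi}.} By Lemma \ref{lem:defgammaonXi} and Remark \ref{rem:PWNew2.3}, the only singularities of the integrand $w_2(\x,p;-\x^{\mathrm{ref}})\,w_1(\y,p;\x^{\mathrm{ref}})\,\Omega_{\x^{\mathrm{ref}}}$ on $\Xi$ are branch points. They sit at $\Phi_\pm(\x)$, $\Phi_\pm(\y)$ and $\Phi_\pm(-\x^{\mathrm{ref}})=\Phi_\mp(\x^{\mathrm{ref}})$, with cuts joining $\Phi_+(\y)$ to $\Phi_-(\x)$ and $\Phi_+(\x)$ to $\Phi_-(\y)$ as in Figure \ref{fig:PWApp01}. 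By Cauchy's theorem on $\Xi\simeq\widehat{\mathbb{C}}$, $\gamma_{\y}=\Phi(\Gamma_{\y})$ can be deformed to any loop $\tilde\gamma$ in the same homotopy class in $\Xi$ minus these points and cuts.

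The candidate contour is built from the sets $\Phi_+(\partial\mathbb{S})$ and $\Phi_-(\partial\mathbb{S})$, which are disjoint closed curves because $\Psi_\pm$ are diffeomorphisms and $\Phi_+(\y)=\Phi_-(-\y)$. I take $\tilde\gamma$ to be a simple loop on $\Xi$ that:
\begin{itemize}
\item leaves $\Phi_+(\overline{\mathbb{S}})$ and $\Phi_-(\x)$ to its left;
\item leaves $\Phi_-(\overline{\mathbb{S}})$ and $\Phi_+(\x)$ to its right.
\end{itemize}
Such a loop exists provided $\Phi_-(\x)\notin\Phi_-(\overline{\mathbb{S}})$, which holds since $\x\notin\overline{\mathbb{S}}$. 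We also need $\Phi_+(\x)\notin\Phi_+(\overline{\mathbb{S}})$ and the two images to be disjoint. These follow from $\mathbb{S}\subset\mathcal{C}$, with $\mathcal{C}$ smaller than a hemisphere, so that $-\mathbb{S}\cap\mathbb{S}=\emptyset$; I would also need to rule out $-\x\in\mathbb{S}$ blocking things. Concretely, $\tilde\gamma$ can be taken as a $\Phi_+$-image of a curve $\tilde\Gamma'$ close to $\partial\mathbb{S}$ outside $\mathbb{S}$ and not enclosing $\x$, joined appropriately with its $\Phi_-$-counterpart. For $\x\notin\mathcal{C}$ this reduces to $\gamma=\Phi(\Gamma)$.

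\textbf{Step 3: check the homotopy and finish as in Proposition \ref{prop:5.2}.} I then check that for every $\y\in\partial\mathbb{S}$ the loop $\tilde\gamma$ separates the same pairs of branch points as $\gamma_{\y}$, so that the deformation crosses no singularity and no cut. This gives $G(\x;\y)=\mathcal{B}_\lambda\int_{\tilde\gamma}w_2(\x,p;-\x^{\mathrm{ref}})\,w_1(\y,p;\x^{\mathrm{ref}})\,\Omega_{\x^{\mathrm{ref}}}$ uniformly in $\y$, and similarly for \eqref{e:New02}. Because $\tilde\gamma$ avoids $\Phi_\pm(\partial\mathbb{S})$, the integrand is continuous on $\tilde\gamma\times\partial\mathbb{S}$. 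Fubini then lets us swap the integrals exactly as in the proof of Proposition \ref{prop:5.2}, yielding \eqref{e:New16} and \eqref{e:New17}. The directivities $\mathcal{D}_{1,2}$ are evaluated on $\tilde\gamma$ through their analytic continuation from Remark \ref{rem:PWNew2.3}, which is legitimate since $\tilde\gamma$ avoids the singular set.

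The main obstacle is Step 3: ensuring the deformation from each $\gamma_{\y}$ to the common $\tilde\gamma$ does not cross the cuts, in particular keeping the branch prescribed at $\x^{\mathrm{ref}}$ consistent. I would handle this by working in the Riemann-sphere picture and tracking, for $\y$ and $\x$ moving continuously, that the four points never collide. Alternatively, I would argue by continuity in $\x$ starting from $\x\notin\mathcal{C}$, where the lemma already holds with $\tilde\gamma=\gamma$. Moving $\x$ into $\mathcal{C}\setminus\mathbb{S}$ along a path avoiding $\mathbb{S}$, the contour is deformed only to dodge $\Phi_\pm(\x)$. Both sides of \eqref{e:New16} are real-analytic in $\x$ and agree initially, hence agree everywhere.
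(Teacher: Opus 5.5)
Your closing ``alternatively'' argument is the paper's proof. The route you actually lay out in Steps 1--3, however, has genuine gaps.

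\textbf{Step 1.} The great circle $\Gamma_{\y}$ separating $\x$ from both $\y$ and $\x^{\mathrm{ref}}$ need not exist once $\x\in\mathcal{C}\setminus\mathbb{S}$. Suppose $\mathbb{S}$ is non-convex and $\x$ sits in a concavity, on the minor arc joining some $\y\in\partial\mathbb{S}$ to $\x^{\mathrm{ref}}$. Every open hemisphere containing $\y$ and $\x^{\mathrm{ref}}$ contains that arc, hence $\x$. So Proposition~\ref{lem:PWNew01} cannot be invoked for that $\y$ with the prescribed $\x^{\mathrm{ref}}$. For geodesically convex $\mathbb{S}$, a single great circle does separate $\x$ from $\mathbb{S}$, and the lemma then follows exactly as Proposition~\ref{prop:5.2}. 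The real content of the lemma is precisely the case your Step 1 does not cover.

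\textbf{Step 3.} The criterion ``$\tilde\gamma$ separates the same pairs of branch points as $\gamma_{\y}$'' does not imply that one loop can be deformed into the other without crossing singularities. On $\Xi$ minus $\Phi_\pm(\x),\Phi_\pm(\y)$ there are many non-isotopic simple loops inducing the same splitting $\{\Phi_+(\y),\Phi_-(\x)\}\,|\,\{\Phi_-(\y),\Phi_+(\x)\}$; they differ by braiding one point around another. The integrand has non-trivial monodromy $e^{\pm 2\pi i\lambda}$ around each individual point, so such loops generally give different values. Moreover, the branch carried on the loop must be transported, not just declared; cuts do not settle this, since they are only fixed up to the same ambiguity. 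Consequently your $\tilde\gamma$, built from separation requirements and ``joined appropriately'', is not determined well enough for the identity to follow.

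\textbf{The fix.} Make your last paragraph the proof, as the paper does.
\begin{itemize}
\item For $\x\in\mathcal{S}^2\setminus\mathcal{C}$, take $\tilde\gamma=\gamma$, by Proposition~\ref{prop:5.2}.
\item The directivities $\mathcal{D}_{1,2}(\cdot;\x^{\mathrm{ref}})$ are analytic on $\Xi\setminus(\Phi_+(\mathbb{S})\cup\Phi_-(\mathbb{S}))$.
\item As $\x$ moves along a path in $\mathcal{S}^2\setminus\mathbb{S}$, $\Phi_-(\x)$ never enters $\Phi_-(\mathbb{S})$ and $\Phi_+(\x)$ never enters $\Phi_+(\mathbb{S})$. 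Hence, by Cauchy's theorem, the contour can be dragged along so that $\Phi_\pm(\x)$ never touch it and it is never pinched.
\item The right-hand sides of \eqref{e:New16}--\eqref{e:New17} stay analytic in $\x$ and agree with $u$ on the open set $\mathcal{S}^2\setminus\mathcal{C}$. They therefore agree with $u$ along the whole path.
\end{itemize}
This continuation is exactly what fixes the isotopy class and branch of $\tilde\gamma$.

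Your worry about $-\x\in\mathbb{S}$ disappears in this picture. In that case $\Phi_-(\x)=\Phi_+(-\x)$ lies inside $\Phi_+(\mathbb{S})$, on the same side of the contour, and no singularity is crossed; this is the remark following the lemma. With the continuity argument in place, Steps 1--3 and the Fubini swap are unnecessary.
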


\begin{proof}
  According to \eqref{e:New13} and \eqref{e:New014}, whenever $\x\in \mathcal{S}^2 \setminus \mathcal{C}$, we can take  $\tilde \gamma = \gamma$. 
  So, it only remains to show that the plane-wave representations given in
  \eqref{e:New13} and \eqref{e:New014} can be extended to account for general
  observation directions $\boldsymbol{x} \in \mathcal{S}^2 \setminus
  \mathbb{S}$.  

\begin{figure}[h!]
    \centering
    \includegraphics[width=0.75\linewidth]{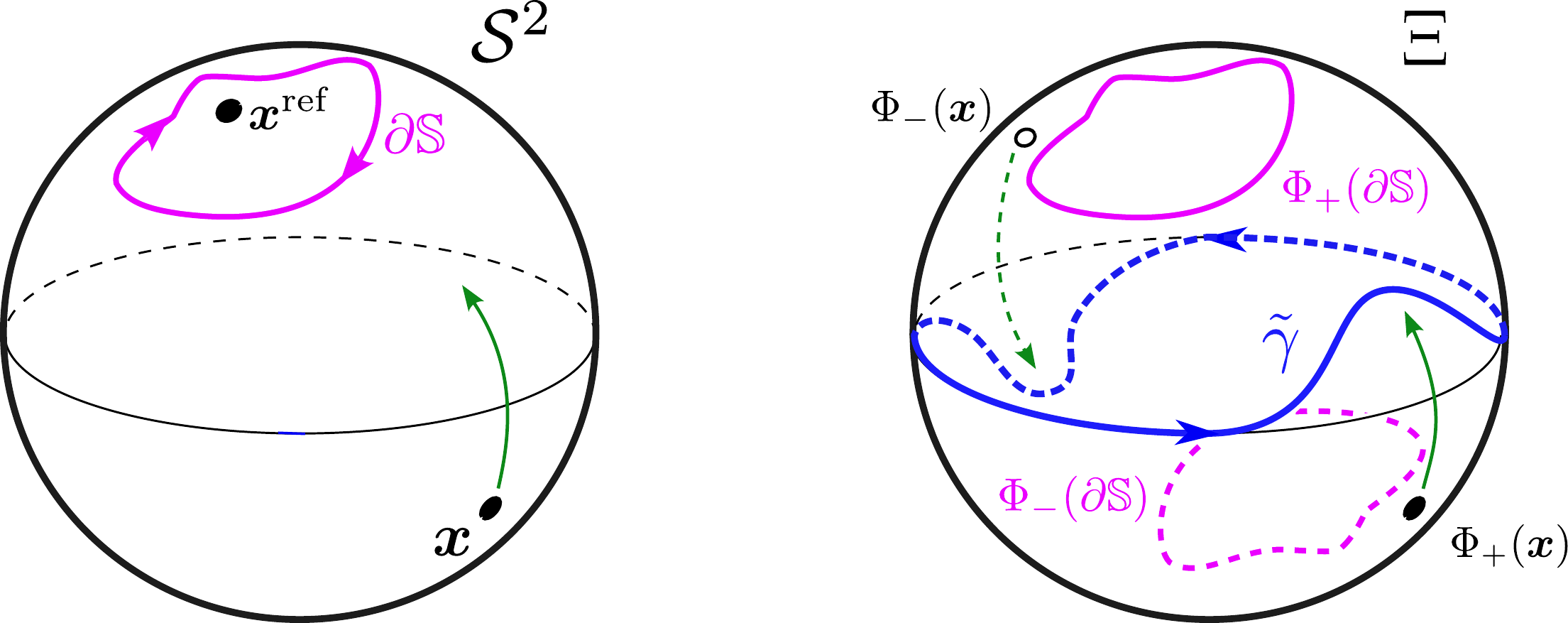}
    \caption{\small Illustration of the procedure carried out within the proof of Lemma \ref{cor:New02}: On the left, we see how $\x$ varies on $\mathcal{S}^2$ indicated by an arrow. On the right, we see the new contour $\tilde{\gamma}$ and how the singularities located at $\Phi_{\pm}(\x)$ vary on $\Xi$.}
    \label{fig:PWRep10}
\end{figure}

  According to Remark \ref{rem:PWNew2.3} and what we know about the plane waves singularities (Section \ref{sec:singplanewaves}), the directivities $\mathcal{D}_{1,2}(p ; \boldsymbol{x}^{\mathrm{ref}})$, as functions of $p$, are analytic on ${\Xi \setminus (\Phi_+(\mathbb{S})\cup\Phi_-(\mathbb{S})}$. 
  So, we can now let $\boldsymbol{x}$ continuously vary on $\mathcal{S}^2
  \setminus \mathbb{S}$ and use Cauchy's theorem to continuously deform the
  contour $\gamma$ on $\Xi \setminus (\Phi_+(\mathbb{S})\cup\Phi_-(\mathbb{S})$. This deformation needs to be done
  such that $\Phi_+(\boldsymbol{x})$ and $\Phi_-(\boldsymbol{x})$ never hit the contour. The  contour resulting from this continuous deformation process is what we will refer
  to as $\tilde \gamma$. This is illustrated in Figure~\ref{fig:PWRep10}.
  \end{proof}

  \begin{remark}
   Importantly, note that as $\x$ varies on $\mathcal{S}^2$, $\Phi_+
  (\boldsymbol{x})$ is allowed to pass through $\Phi_- (\mathbb{S})$ and $\Phi_-
  (\boldsymbol{x})$ is allowed to pass through $\Phi_+ (\mathbb{S})$ as this
  hits no singularities of the integrands
  \[ \mathcal{D}_2 (p ; \boldsymbol{x }^{\mathrm{ref}}) w_1 (\boldsymbol{x}, p ; -
     \boldsymbol{x }^{\mathrm{ref}}) \quad \text{or} \quad \mathcal{D}_1 (p ;
     \boldsymbol{x }^{\mathrm{ref}}) w_2 (\boldsymbol{x}, p ; - \boldsymbol{x
     }^{\mathrm{ref}}) \]
  within \eqref{e:New16} or \eqref{e:New17}.
\end{remark}




Next, let us use the third plane-wave representation of $G(\x;\y)$ given in \eqref{e:NewPW23} in order to obtain a plane-wave representation of $u$ involving the contour $\gamma_\infty$. This is achieved in a fashion similar to how we proved Proposition~\ref{prop:5.2} and Lemma~\ref{cor:New02}, so we will be brief.  

Let us start by considering $\x\in\S^2\setminus \mathcal{C}$ and $\x^{\mathrm{ref}} \in \mathbb{S}$. In analogy with Lemma \ref{lem:PWNew1.5}, we can ensure that there exists a contour $\gamma_\infty$ independent of $\y$, such that \eqref{e:NewPW23} is valid for all $\y\in\partial \mathbb{S}$. 

Note that because of the added constraint $\y\neq\x$ in Proposition \ref{prop:gammainfinityG}, we should actually restrict $\x$ a bit more and can only consider $\x\in\S^2\setminus(\mathcal{C}\cup(-\mathbb{S}))$, where $-\mathbb{S}$ is the antipodal image of the scatterer. This representation can then be used within Green's third identity \eqref{e:New002}, and the order of integration can be swapped to obtain the representation
\begin{equation}
     u (\boldsymbol{x}) =\frac{\mathcal{B}_{\lambda}}{2} \int_{\gamma_\infty} \left\{\mathcal{D}_1 (p ; \boldsymbol{x}^{\mathrm{ref}}) w_2 (\boldsymbol{x},
  p ; - \boldsymbol{x}^{\mathrm{ref}}) - \mathcal{D}_2(p, \boldsymbol{x}^{\mathrm{ref}})
  w_1 (\boldsymbol{x}, p ; - \boldsymbol{x}^{\mathrm{ref}})\right\}
  \Omega_{\boldsymbol{x}^{\mathrm{ref}}},
  \label{e:PWNew023}
\end{equation}
where the directivities $\mathcal{D}_{1,2}$ are defined in \eqref{e:New14}-\eqref{e:New18}. 
Finally, as for the proof of Lemma \ref{cor:New02}, we can now consider the case where $\boldsymbol{x}$ moves to within $\mathcal{C}\setminus\mathbb{S}$. It is done by continuously deforming $\gamma_\infty$ on $\Xi$ such that it never hits $\Phi_+
(\boldsymbol{x})$ and $\Phi_- (\boldsymbol{x})$. The resulting contour is denoted $\tilde \Gamma_{\!\! \infty}$. The procedure is
illustrated in Figure \ref{fig:PWRep11}, right. The plane wave representation, valid for $\x\in\S^2\setminus(\mathbb{S}\cup(-\mathbb{S}))$ becomes
\begin{equation}
     u (\boldsymbol{x}) =\frac{\mathcal{B}_{\lambda}}{2} \int_{\tilde \gamma_{\infty}} \left\{\mathcal{D}_1 (p ; \boldsymbol{x}^{\mathrm{ref}}) w_2 (\boldsymbol{x},
  p ; - \boldsymbol{x}^{\mathrm{ref}}) - \mathcal{D}_2(p, \boldsymbol{x}^{\mathrm{ref}})
  w_1 (\boldsymbol{x}, p ;  -\boldsymbol{x}^{\mathrm{ref}})\right\}
  \Omega_{\boldsymbol{x}^{\mathrm{ref}}}.
  \label{e:PWNew023raph}
\end{equation}
Through this procedure, we find that the representation \eqref{e:PWNew023} can be extended to \eqref{e:PWNew023raph}
as long as neither $\Phi_+ (\boldsymbol{x})$, nor $\Phi_- (\boldsymbol{x})$ hit
$\Phi_+ (\mathbb{S})$ or $\Phi_-(\mathbb{S})$. If this were to be the case, the contour of integration would become pinched between $\Phi_{\pm}(\x)$ and $\Phi_{\pm}(\mathbb{S})$. 

\begin{figure}[h!]
    \centering
    \includegraphics[width=\linewidth]{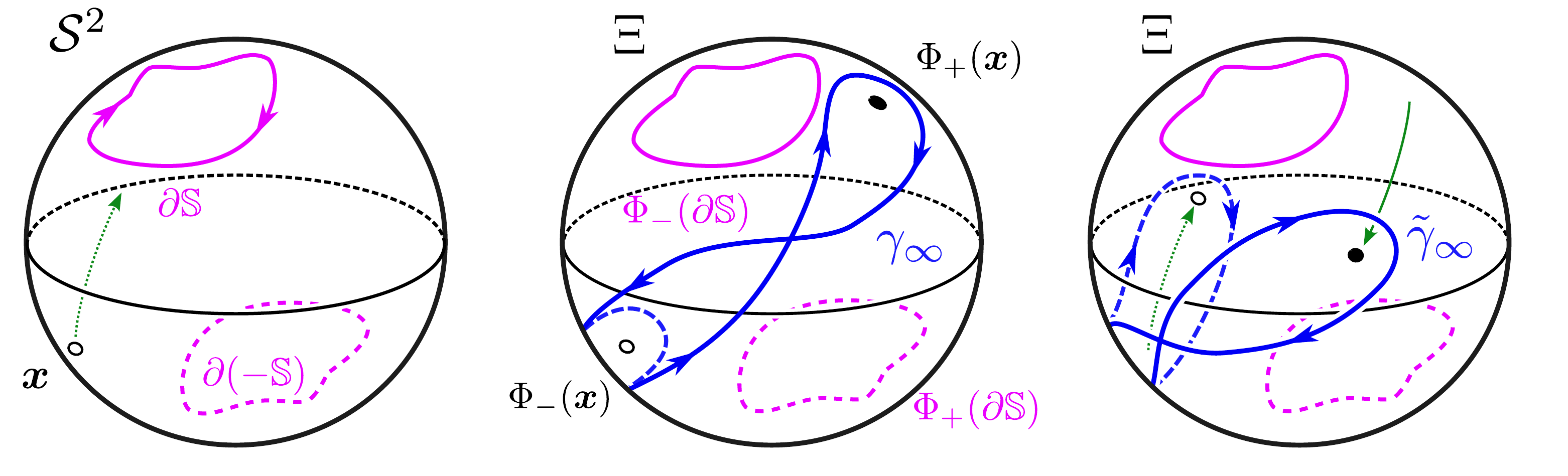}
    \caption{\small Illustration of why one may use the representation \eqref{e:NewPW23} within \eqref{e:New002} (left and middle), as well as contour deformation required for obtaining \eqref{e:PWNew023raph} (left and right). }
    \label{fig:PWRep11}
\end{figure}

Let us summarise our results in the following Theorem. 

    \begin{theorem}
        Let $u$ satisfy the Laplace-Beltrami boundary-value problem problem  \eqref{e:HLB}-\eqref{e:ID} formulated in Section~\ref{sec:pbformulationsphere}, let $\x^{\mathrm{ref}} \in \mathbb{S}$, let $w_{1,2}$ be as in \eqref{def:planewavesderaph}, and let $\mathcal{D}_{1,2}$ be given by \eqref{e:New14}-\eqref{e:New18}. Then, for $\x \in \mathcal{S}^2 \setminus \mathbb{S}$, $u$ can be written as either of the plane-wave representations given in \eqref{e:New16} and \eqref{e:New17}. For  $\x \in \mathcal{S}^2 \setminus (\mathbb{S} \cup - \mathbb{S})$, on the other hand, $u(\x)$ can be written as any of the three plane-wave representations given throughout \eqref{e:New16}, \eqref{e:New17}, and \eqref{e:PWNew023raph}.
    \end{theorem}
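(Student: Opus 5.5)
The theorem collects results already established in this section, so the plan is to assemble them carefully and check that the hypotheses match at each step.

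\textbf{Representations \eqref{e:New16}--\eqref{e:New17}.} For $\x\in\S^2\setminus\mathcal{C}$, use Lemma~\ref{lem:PWNew1.5}. It gives a great circle $\Gamma$ separating $\x$ from $\mathcal{C}$, and hence a contour $\gamma=\Phi(\Gamma)$ independent of $\y\in\partial\mathbb{S}$ and of $\x^{\mathrm{ref}}\in\mathbb{S}$. Substitute \eqref{eq:first_GreenLem}--\eqref{eq:second_GreenLem} into Green's third identity \eqref{e:New002} with $\tilde\Gamma=\partial\mathbb{S}$. Then swap the two integrals, which is Fubini on the compact product $\partial\mathbb{S}\times\gamma$. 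This is justified because, by Remark~\ref{rem:PWNew2.3}, the integrand is continuous there: $\Gamma\cap\partial\mathbb{S}=\emptyset$, so no plane-wave singularity meets $\partial\mathbb{S}$ for $p\in\gamma$. This yields Proposition~\ref{prop:5.2}.

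For $\x\in\mathcal{C}\setminus\mathbb{S}$, use the continuation argument of Lemma~\ref{cor:New02}. Note that $\S^2\setminus\mathbb{S}$ is path-connected, so any such $\x$ can be joined to a point of $\S^2\setminus\mathcal{C}$. Along this path, $\Phi_\pm(\x)$ moves continuously on $\Xi$, and we deform $\gamma$ into $\tilde\gamma$ so that it avoids $\Phi_\pm(\x)$ and stays inside the region where $\mathcal{D}_{1,2}$ are analytic, namely $\Xi\setminus(\Phi_+(\mathbb{S})\cup\Phi_-(\mathbb{S}))$. Both sides of \eqref{e:New16}--\eqref{e:New17} are then real-analytic in $\x$ along the path. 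The left side is analytic because $u$ solves the elliptic equation \eqref{e:HLB}. The right side is analytic because the contour integral depends analytically on parameters as long as the contour is not pinched. Since the two sides agree on an open set, they agree along the whole path.

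\textbf{Main obstacle: no pinch.} The key check is that the deformation never pinches. The danger would be $\Phi_+(\x)$ meeting $\Phi_+(\mathbb{S})$, or $\Phi_-(\x)$ meeting $\Phi_-(\mathbb{S})$. By property 3 of the maps, $\Phi_\pm$ are bijections, so $\Phi_\pm(\x)\in\Phi_\pm(\mathbb{S})$ would force $\x\in\mathbb{S}$, which is excluded. The crossings $\Phi_+(\x)\in\Phi_-(\mathbb{S})$ are harmless, as explained in the remark following Lemma~\ref{cor:New02}: the factor $w_{1,2}(\x,p;-\x^{\mathrm{ref}})$ is singular only at $\Phi_\pm(\x)$ and at $\Phi_\pm(-\x^{\mathrm{ref}})=\Phi_\mp(\x^{\mathrm{ref}})$. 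The latter points lie in $\Phi_\mp(\mathbb{S})$ and are never crossed by the contour, so they cause no trouble.

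\textbf{Representation \eqref{e:PWNew023raph}.} Run the same scheme with Proposition~\ref{prop:gammainfinityG}. First, for $\x\in\S^2\setminus(\mathcal{C}\cup-\mathbb{S})$, choose one figure-of-eight loop $\gamma_\infty$ valid for all $\y\in\partial\mathbb{S}$ simultaneously. This is possible because $\Phi_\pm(\partial\mathbb{S})$ are compact and disjoint from small neighbourhoods of $\Phi_\pm(\x)$, and the condition $\x\neq\pm\y$ holds. Fix the branch at $p_0$ as prescribed in that proposition. Substituting into \eqref{e:New002} and swapping integrals gives \eqref{e:PWNew023}.

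Then continue in $\x$ within $\S^2\setminus(\mathbb{S}\cup-\mathbb{S})$, deforming the loop to $\tilde\gamma_\infty$. Here the loop must encircle $\Phi_\pm(\x)$ while avoiding all of $\Phi_+(\mathbb{S})\cup\Phi_-(\mathbb{S})$. A pinch would require $\Phi_\pm(\x)$ to hit $\Phi_\pm(\mathbb{S})$, which means $\x\in\mathbb{S}$, or to hit $\Phi_\mp(\mathbb{S})=\Phi_\pm(-\mathbb{S})$, which means $\x\in-\mathbb{S}$. Both are excluded.

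The step I expect to be most delicate is the connectedness needed for this second continuation. The set $\S^2\setminus(\mathbb{S}\cup-\mathbb{S})$ is connected, because $\mathbb{S}$ and $-\mathbb{S}$ lie in disjoint caps in opposite hemispheres. However, tracking the branch of the integrand consistently along the deformation is where care is required.
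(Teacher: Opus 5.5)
Your proposal is correct and follows the paper's own route. For $\x\notin\mathcal{C}$ you use Lemma~\ref{lem:PWNew1.5} with Green's identity \eqref{e:New002} and exchange the order of integration, restricting to $\x\notin\mathcal{C}\cup-\mathbb{S}$ for the $\gamma_\infty$ version; you then deform the contour on $\Xi$ continuously under exactly the paper's no-pinch criteria. Those criteria are $\Phi_\pm(\x)\notin\Phi_\pm(\mathbb{S})$ for \eqref{e:New16}--\eqref{e:New17}, and additionally $\Phi_\pm(\x)\notin\Phi_\mp(\mathbb{S})$ for \eqref{e:PWNew023raph}. Your Fubini and real-analytic identity-theorem steps simply make explicit justifications that the paper leaves implicit.
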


By definition, the plane-wave representation \eqref{e:PWNew023raph} is more restricted than the representations\eqref{e:New16}-\eqref{e:New17} in terms of range of validity.
    Still, there are some clear advantages to the representation \eqref{e:PWNew023raph}: Beyond the fact that it allows for more general locations of $\x^{\mathrm{ref}}$ compared to \eqref{e:New16}-\eqref{e:New17}, one can use either of the representations \eqref{e:New16}-\eqref{e:New17} to represent $u(\x)$ whenever $\x$ lies in the southern hemisphere $x_3 <0$, and use the representation \eqref{e:PWNew023raph} for the remaining observation directions $\x \in (\mathcal{S}^2 \setminus \mathbb{S})\cap \{x_3 \geq 0\}$. This is possibly advantageous, since, as illustrated in Figure \ref{fig:PWRep11}, the geometry of the $\tilde{\gamma}_{\infty}$ relative to $\Phi_{\pm}(\x)$ remains unchanged as $\x$ moves. So, akin to what is done in \citep{partAcomplexSphere} for the Green's function, one may  build a sliding-contours family consisting of $\tilde \gamma_{\infty}$ loops. That is, whenever $\x^{\mathrm{ref}} \in \mathbb{S}$, using the representation \eqref{e:PWNew023raph} alleviates the need for continuous `ad-hoc' contour deformation in order to extend the region of validity of $u(\x)$'s plane-wave representation.
  
  {The plane-wave representation formulae given in this section are the principal results of the present article. Our next goal is to show that the directivities $\mathcal{D}_1$ and $\mathcal{D}_2$ introduced in \eqref{e:New14} and \eqref{e:New18} are actually analogous to the directivity pattern $\mathcal{D}$ encountered in the planar setting described in Section \ref{sec:motivation}. To achieve this, we will need to make sense of what a `far-field' expansion of $u$ looks like, which requires a notion of infinity. Such infinity is absent on the compact sphere $\mathcal{S}^2$, but, as in \citep{partAcomplexSphere}, \textit{complexification} of $\mathcal{S}^2$ will introduce this sought-after (asymptotic) notion of infinity. This will allow us to analytically continue $u$ onto this complexification, in the spirit of \citep{AssierShanin2021AnalyticalCont}, and subsequently study its `far-field' asymptotics.}

\section{Analytical continuation to the complexified sphere} \label{sec:analytical-continuation}

\subsection{Complexification of the sphere and Green's function $G$}\label{sec:AC01}

We define the \textit{complexified sphere} as 
    \begin{align}
        \mathcal{S}^2_{\c} = \left\{\z \in \mathbb{C}^3:~ \z \cdot \z=1\right\}
    \end{align}
where, as everywhere below, we let 
    \begin{align}
        \z \cdot \z' = z_1 z'_1 + z_2 z_2' + z_3 z_3'
    \end{align}
 for any $\z = (z_1,z_2,z_3)$ and $\z' = (z_1',z_2',z_3')$ in $\mathbb{C}^3$. As discussed in \citep{partAcomplexSphere}, the complex sphere $\S^2_{\c}$ can be described by the coordinates $( \theta , \varphi )$
(both taking complex values) almost everywhere via formulae similar to \eqref{Ch02.eq.01}: 
\begin{equation}
z_1 = \sin \theta \, \cos \varphi , 
\quad 
z_2 = \sin \theta \, \sin \varphi , 
\quad 
z_3 = \cos \theta.
\label{e:f003}
\end{equation}
The coordinates $(\theta, \varphi)$ do not work regularly near the points~$\z = (0,0,\pm 1)$.
Neighbourhoods of these points
can be covered by, say, interchanging $z_2$ and $z_3$ in \eqref{e:f003}. 

Using these complexified coordinates, we can now also define the \textit{complexified Laplace--Beltrami equation} (CLBE), which was introduced in \citep[Section 2(d)]{partAcomplexSphere}. 
Let $U$ be an open domain in $\S^2_{\c}$.
Use the complex coordinates \eqref{e:f003} in~$U$. 
A function $\mathscr{u}(\z)$, $\z \in U$ is a solution to the CLBE if $\mathscr{u}(\theta , \varphi)$ is a holomorphic function of $(\theta, \varphi)$ in $U$, and 
it obeys there the equation having the same form as \eqref{e:HLB}:
\begin{equation}
\left( 
\tilde \Delta_{\c} 
 + \lambda (\lambda + 1)
\right) \mathscr{u} (\theta , \varphi) = 0,
\qquad 
\tilde \Delta_{\c} =  \ptl_{\theta}^2 + \frac{\cos \theta}{\sin \theta}\ptl_{\theta}+ 
\frac{1}{\sin^2 \theta}\ptl_{\varphi}^2.
\label{e:f007}
\end{equation}
Note that the CLBE \eqref{e:f007} is homogeneous, and the derivatives are understood in the complex holomorphic sense. In this case, we call $\tilde \Delta_{\c}$ the \textit{complexified Laplace--Beltrami operator} (CLBO). Even though we used complexified coordinates in order to define $\tilde \Delta_{\c}$, it can be shown that the definition of the CLBO does not depend on the choice of coordinate system, see \eqref{AppA.2} in Appendix \ref{App:Diff}.

 Within $\mathcal{S}^2_{\c}$, we will need to consider the following complex lines $L(\z; \bdeta)$ introduced in \citep[Section 2(c)]{partAcomplexSphere}. 

 \begin{definition}
   Let $\z \in \mathcal{S}^2_{\c}$ and $\bdeta \in \mathcal{N}$, with $\z \cdot \bdeta =0$. We define
    \begin{equation}
    L\left(\z; \bdeta\right)=\left \{\z^{\prime} \in \mathbb{C}^3 : \z^{\prime} = \z + c \bdeta \text{ for some }
        c \in \mathbb{C}\right\} \subset \mathcal{S}^2_{\c}, \label{e:bdd004}
    \end{equation}
   and call it the characteristic line passing through $\z$ with direction $\bdeta$. 
 \end{definition}

We next generalise the construction of the points $\Phi_{+}(\x)$ and $\Phi_-(\x)$, described in Section \ref{sec:singplanewaves} to complex $\z$. Consider the equation
\begin{align}
    \z \cdot \bdeta = 0, \quad \z \in \mathcal{S}^2_{\c}, ~ \text{ and } ~ \bdeta \in \mathcal{N} \label{e:6.6}
\end{align}
which defines the characteristic lines.
Solving \eqref{e:6.6} for $\bdeta$, using the parametrisation \eqref{e:f003}, we find
   \begin{equation}
	\boldsymbol{\eta}_{\pm}(\z) =  c \left(-i \cos \theta \, \cos \varphi \mp \sin \varphi
    \, , \,
    - i \cos \theta \, \sin \varphi \pm \cos \varphi
    \, , \, 
    i\sin \theta \right), 
    \qquad 
    c \in \mathbb{C} \setminus \{ 0 \}.
\label{e:ap004}
\end{equation}	
When $\z = \x \in \mathcal{S}^2$, we have $[\bdeta_+(\x)] = \Phi_+(\x) \in \Xi$ and $[\bdeta_-(\x)]=\Phi_-(\x)\in \Xi$, so $[\boldsymbol{\eta}_{\pm}(\z)]$ naturally extend the maps $\Phi_{\pm}$ to complex $\z$.

\begin{lemma} \label{lem:charac} The following statements are true \citep[Lemma 2.6]{partAcomplexSphere}:
\begin{enumerate}
\item
For each $\z \in \S^2_{\c}$, there are exactly two characteristic lines $L_{\pm}(\z)$ passing through $\z$, and these are given by $L_{\pm}(\z) = L(\z; \bdeta_{\pm}(\z))$. Consequently, for $\z \neq \z' \in \mathcal{S}^2_{\c}$, we have $L_+(\z) \cap L_+(\z') = \emptyset$ and $L_-(\z) \cap L_-(\z') = \emptyset$. 
\item The characteristic lines $L_{\pm}(\z)$ intersect $\mathcal{S}^2$ in a single point each. We denote these points by $\Psi_+(\z) = L_+(\z) \cap \mathcal{S}^2$ and  $\Psi_-(\z) = L_-(\z) \cap \mathcal{S}^2$. Explicit formulae for $\Psi_{\pm}(\z)$ can be obtained from \citep[Appendix A]{partAcomplexSphere}. 
We will comment in Remark \ref{rem6.3} on why we choose notations similar to the functions $\Psi_{\pm}(p)$, which were defined in Section \ref{sec:singplanewaves}.
\item
Let  $\z'\in \S^2_{\c}$. Then,
\begin{equation}
L_+ ( \z' ) \cup L_- ( \z' ) = 
\{ \z \in \S^2_{\c} 
\, \, : \, \,
\z \cdot \z' = 1
\}.
\label{e:f006c}
\end{equation}
\end{enumerate}
\end{lemma}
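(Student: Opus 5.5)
The plan is to verify the three items of Lemma \ref{lem:charac} by direct algebra on the quadric $\S^2_{\c}$, using only the defining relations $\z\cdot\z=1$ and $\bdeta\cdot\bdeta=0$.

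\textbf{Item 1.} Fix $\z\in\S^2_{\c}$ and a direction $\bdeta\in\mathcal{N}$. A line $\z+c\bdeta$ lies in $\S^2_{\c}$ if and only if
\[
(\z+c\bdeta)\cdot(\z+c\bdeta)=1+2c\,\z\cdot\bdeta+c^2\,\bdeta\cdot\bdeta=1 \quad\text{for all } c .
\]
Since $\bdeta\cdot\bdeta=0$ already, this reduces to $\z\cdot\bdeta=0$. The directions are therefore the projective solutions of
\[
\bdeta\cdot\bdeta=0,\qquad \z\cdot\bdeta=0 .
\]
This is the intersection of a nondegenerate conic in $\CP^2$ with a line, so there are at most two points. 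They are exactly two and distinct unless the line is tangent, which happens precisely when $\z\cdot\z=0$; that case is excluded here. Concretely, \eqref{e:ap004} exhibits the two solutions $\bdeta_\pm(\z)$ in coordinates. Near the poles I would instead rotate coordinates by a complex orthogonal matrix, or simply argue directly with the conic to avoid coordinate charts.

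For disjointness, suppose $\z''\in L_+(\z)\cap L_+(\z')$. Then $L_+(\z'')$ is the unique $+$-line through $\z''$, so it coincides with both $L_+(\z)$ and $L_+(\z')$. I expect the main obstacle to be making the $\pm$ labelling globally consistent along a line. I would handle it by checking from \eqref{e:ap004} that $[\bdeta_+(\z+c\bdeta_+(\z))]=[\bdeta_+(\z)]$, i.e.\ the $+$ direction is constant along a $+$-line. Then $L_+(\z)=L_+(\z')$ as sets, which is what the statement means by disjointness for distinct lines. If the intended statement is literally for $\z\neq\z'$, I would read it as concerning distinct lines and note this.

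\textbf{Item 2.} I intersect $L_+(\z)$ with the real sphere. A point of $L_+(\z)\cap\S^2$ is real and is orthogonal to the direction $\bdeta_+$. Writing $\bdeta=\mathbf{a}+i\mathbf{b}$, the null condition gives $|\mathbf{a}|=|\mathbf{b}|$ and $\mathbf{a}\perp\mathbf{b}$. A real $\x$ with $\x\cdot\bdeta=0$ is then $\pm\mathbf{a}\times\mathbf{b}/|\mathbf{a}\times\mathbf{b}|$, which are the two points $\Psi_\pm([\bdeta])$.

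Next, $\x\in L(\z;\bdeta)$ also requires $\x\cdot\z=1$, because $\z\cdot\x=\z\cdot\z+c\,\z\cdot\bdeta=1$. Of the two candidates $\pm\mathbf{a}\times\mathbf{b}$, these values of $\x\cdot\z$ are opposite, so at most one candidate satisfies the condition. It remains to show that $\x-\z$ is then proportional to $\bdeta$. Both $\x-\z$ and $\bdeta$ lie in $\bdeta^{\perp}$, which is 2-dimensional and contains $\bdeta$. Moreover $\x-\z$ is null, since
\[
(\x-\z)\cdot(\x-\z)=1-2+1=0 .
\]
The null vectors in $\bdeta^\perp$ are only multiples of $\bdeta$, because the form restricted to that plane is degenerate of rank one. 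This gives existence and uniqueness once I show $\x\cdot\z=\pm1$ for the candidates. That follows from the same null-plane argument applied to $\z$ and $\x$ both lying in $\bdeta^\perp$.

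\textbf{Item 3.} If $\z\in L_\pm(\z')$, then $\z\cdot\z'=\z'\cdot\z'+c\,\bdeta\cdot\z'=1$. Conversely, if $\z\cdot\z'=1$ with both points on the quadric, then $(\z-\z')\cdot(\z-\z')=0$ and $(\z-\z')\cdot\z'=0$. So $\z-\z'$ is either zero or null and orthogonal to $\z'$, hence proportional to $\bdeta_+(\z')$ or $\bdeta_-(\z')$ by Item 1.
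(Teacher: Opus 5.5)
Your proof is correct. The paper itself does not prove this lemma: it is imported from \citep[Lemma 2.6]{partAcomplexSphere}, with a pointer to explicit coordinate formulae for $\Psi_{\pm}(\z)$ there. The relevant comparison is therefore with a chart-based/projective verification, not with an argument in this text.

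Your route is coordinate-free linear algebra for the complex bilinear form. It rests on two facts:
\begin{itemize}
\item The form is nondegenerate on $\z^{\perp}$ because $\z\cdot\z=1$. So there are exactly two null lines in $\z^{\perp}$, which gives item 1 and the converse in item 3.
\item The form has rank one on $\bdeta^{\perp}$, with kernel $\bdeta$. Writing $\z=s\bdeta+t\x$ in the basis $\{\bdeta,\x\}$ of $\bdeta^{\perp}$ gives $t^2=\z\cdot\z=1$ and $\z\cdot\x=t$. This is exactly the existence and uniqueness you need in item 2.
\end{itemize}
This treats the whole quadric uniformly, with no special handling of the poles. The price is that it does not reproduce closed-form expressions for $\Psi_{\pm}(\z)$. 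It does, however, characterise $\Psi_{+}(\z)$ as whichever of $\pm\mathbf{a}\times\mathbf{b}/|\mathbf{a}\times\mathbf{b}|$ has dot product $1$ with $\z$.

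Your caveat on the disjointness clause is justified. If $\z'\in L_+(\z)$ with $\z'\neq\z$, then $L_+(\z')=L_+(\z)$; for instance take $\z'=\xNP+c(-i,1,0)$ with $c\neq0$. So the literal statement is false. Its correct content is that two $+$-lines are either equal or disjoint. It does hold literally for real points $\x\neq\x'$, by item 2, since each characteristic line contains exactly one real point.

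One practical remark on how you plan to check label consistency. Doing it from \eqref{e:ap004} is awkward. By item 3, the characteristic lines through the poles fill the sets $\{z_3=\pm1\}$, and the chart \eqref{e:f003} degenerates completely there ($\cos\theta=\pm1$ forces $z_1=z_2=0$). A cleaner check uses the invariant characterisation $\z\times\bdeta_{\pm}(\z)=\mp i\,\bdeta_{\pm}(\z)$:
\begin{itemize}
\item The map $\boldsymbol{v}\mapsto\z\times\boldsymbol{v}$ preserves $\z^{\perp}$ and squares to $-1$ there.
\item Its eigenlines are exactly the two null lines, since $(\z\times\boldsymbol{v})\cdot\boldsymbol{v}=0$.
\item You fix which sign is which from \eqref{e:ap004} at a single point.
\item The relation is visibly unchanged when $\z$ is replaced by $\z+c\,\bdeta_{+}(\z)$, because $\bdeta_{+}\times\bdeta_{+}=0$ and $\bdeta_{+}$ stays orthogonal to the new point.
\end{itemize}
This gives $[\bdeta_{+}(\z+c\,\bdeta_{+}(\z))]=[\bdeta_{+}(\z)]$ on all of $\S^2_{\c}$, including along the lines through the poles.
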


\begin{remark}\label{rem6.3}
    As explained in \citep[Section 2(b)]{partAcomplexSphere}, by working within the complex projective space $\CP^3$, we see that $\mathcal{S}^2_{\c}$ is embeddable into the quadric surface 
    \[\hat{\mathcal{S}}^2_{\c} = \{\class{z_0:z_1:z_2:z_3} \in \CP^3: z_1^2+z_2^2+z_3^2=z_0^2\}\] via the embedding
    $\iota(z_1,z_2,z_3) = \class{1:z_1:z_2:z_3}$.  Thus, through this embedding, we may view $\Psi_{\pm}(\z)$ as maps defined on $\hat{\mathcal{S}}^2_{\c}$. This is described in detail in \citep{partAcomplexSphere}. 
    We also observe that, by definition, $\Xi \subset \hat{\mathcal{S}}^2_{\c}$. What is important for the present article is that, through this complex-projective viewpoint, the maps $\Psi_{\pm}(\z)$ 
    can be seen as continuations of the maps $\Psi_{\pm}: \Xi \to \mathcal{S}^2$ introduced in Section \ref{sec:singplanewaves}, which justifies the similar terminology. 
\end{remark}

The characteristic lines will play an important role when analytically continuing the scattered field $u$ to $\S_{\c}^2$ as we will see in Section \ref{sec:AC02}. This is because the characteristic lines dictate the propagation of singularities of $G$. This is made precise in the following Lemma.

 \begin{lemma}\label{lem:G}
     The free-space Green's function $G(\x;\x_0)$ admits an analytic continuation, in either argument, onto $\mathcal{S}^2_{\c}$. This analytic continuation is defined via 
       \begin{equation}
         G(\z; \z_0) = \frac{1}{4 \sin (\pi \lambda)} P_{\lambda} (-\z \cdot \z_0), ~ \text{ for } ~ \z, \z_0 \in \S^2_{\c}. \label{e:f010}
        \end{equation}
      For fixed $\z_0 \in \mathcal{S}^2_{\c}$, the \textit{complexified} Green's function $G(\z;\z_0)$ is analytic, and generally multivalued, within $\S^2_{\c} \setminus (T(\z_0)\cup T(-\z_0))$, where 
    \begin{align}
        T(\z_0) =  L_+(\z_0) \cup L_-(\z_0) = \{ \z \in \S^2_{\c} 
\, \, : \, \,
\z \cdot \z_0 = 1
\}. \label{e:6.8}
    \end{align}
    In \eqref{e:f010}, $P_{\lambda}$ is understood as the (multivalued) analytical continuation of the Legendre function to $\mathbb{C} \setminus \{ 1,-1 \}$, as described in \citep[Section 3]{Bateman1953}. 
    The analytical continuation $G(\z;\z_0)$ solves the CLBE \eqref{e:f007} on $\mathcal{S}^2_{\c} \setminus (T(\z_0) \cup T(-\z_0))$.  
\end{lemma}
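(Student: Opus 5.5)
The plan is to rely on the analytic structure of the Legendre function $P_\lambda$ and on the fact that $\z\mapsto -\z\cdot\z_0$ is a polynomial (hence holomorphic) map on $\S^2_{\c}$.

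\textbf{Step 1: analytic continuation.} Recall from \citep[Section 3]{Bateman1953} that $P_\lambda(\zeta)$ is holomorphic on $\mathbb{C}\setminus(-\infty,-1]$. Its continuation is an (in general) multivalued analytic function on $\mathbb{C}\setminus\{1,-1\}$, branching only at $\zeta=-1$ and at $\zeta=\infty$. In particular $\zeta=1$ is a regular point. For real $\x,\x_0\in\S^2$ with $\x\neq\x_0$ we have $-\x\cdot\x_0\in(-1,1]$, so \eqref{e:f010} agrees with \eqref{eq.Ch02--Def.GreensFunctionG} there. The composition $\z\mapsto P_\lambda(-\z\cdot\z_0)$ is therefore locally analytic wherever $-\z\cdot\z_0\neq\pm1$. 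By uniqueness of analytic continuation, and since $\S^2$ is a totally real submanifold of maximal dimension in the complex manifold $\S^2_{\c}$, this gives the continuation of $G$ in $\z$. The same argument with the roles of $\z$ and $\z_0$ exchanged gives the continuation in $\z_0$, since $G$ is symmetric.

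\textbf{Step 2: the singular set.} The possible singularities are where $-\z\cdot\z_0=\pm1$, that is, $\z\cdot\z_0=1$ or $\z\cdot\z_0=-1$. The first is exactly $T(\z_0)$ by \eqref{e:f006c} in Lemma~\ref{lem:charac}. The second is $\z\cdot(-\z_0)=1$, which is $T(-\z_0)$ by the same identity applied to $-\z_0\in\S^2_{\c}$. Off these sets the continuation is analytic along every path, and multivaluedness arises only from monodromy of $P_\lambda$ around its branch points. (Strictly, only the branch point $\zeta=-1$ matters, which corresponds to $T(\z_0)$. Including $T(-\z_0)$ is harmless and matches the statement, which asserts analyticity only on the complement.)

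\textbf{Step 3: the CLBE.} I would check that $\z\mapsto P_\lambda(\z\cdot\dd)$ solves \eqref{e:f007} for fixed $\dd$. By rotation invariance I would first do this for $\dd=\xNP$. Here rotation means complex $\SO(3,\mathbb{C})$, under which the CLBO is invariant, as in Appendix~\ref{App:Diff}. For $\dd=\xNP$ the function is $P_\lambda(\cos\theta)$, independent of $\varphi$, and \eqref{e:f007} reduces to Legendre's equation in the variable $\cos\theta$. This holds identically in the holomorphic sense on every branch, since the Legendre ODE has analytic coefficients away from $\pm1$. For general $\z_0$ with $\z_0\cdot\z_0=1$, I would pick a complex rotation sending $\z_0$ to $\xNP$. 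Alternatively, I would avoid rotations altogether: the real identity $(\tilde\Delta+\lambda(\lambda+1))G=0$ on $\S^2\setminus\{\pm\x_0\}$ gives a holomorphic expression that vanishes on a totally real set, so it vanishes identically, and the conclusion then continues to complex $\z_0$ by analyticity in $\z_0$.

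The main obstacle is Step 3. One has to make rigorous that the coordinate expression \eqref{e:f007} and the identity-theorem argument apply across the chart changes near $(0,0,\pm1)$, and globally on each branch. I would handle this by invoking the coordinate-independence of the CLBO and the fact that the identity theorem propagates along any path in the connected domain $\S^2_{\c}\setminus(T(\z_0)\cup T(-\z_0))$.
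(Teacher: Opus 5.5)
Your argument is correct and follows the route the paper relies on implicitly; the paper gives no separate proof of this lemma. The ingredients are the same: the explicit formula \eqref{e:f010}, the analytic structure of $P_\lambda$ from Bateman, point 3 of Lemma~\ref{lem:charac} to identify $\{\z\cdot\z_0=\pm1\}$ with $T(\z_0)\cup T(-\z_0)$, and reduction of the CLBE to Legendre's equation, which persists on every branch.

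One side claim, however, is false and should be removed. The multivalued continuation of $P_\lambda$ does not branch only at $\zeta=-1$ and $\infty$; $\zeta=1$ is regular only on the principal branch.

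Near $\zeta=-1$ one has $P_\lambda(\zeta)=\frac{\sin(\pi\lambda)}{\pi}\log\bigl(\tfrac{1+\zeta}{2}\bigr)P_\lambda(-\zeta)+h(\zeta)$, with $h$ holomorphic at $-1$. A positively oriented loop around $\zeta=-1$ therefore adds $2i\sin(\pi\lambda)P_\lambda(-\zeta)$ to $P_\lambda(\zeta)$. This added term has a logarithmic singularity at $\zeta=+1$, because $\lambda\notin\mathbb{Z}$.

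Consequently, once a path in $\z$ winds around $T(\z_0)$, the resulting branch of $G(\cdot;\z_0)$ is genuinely singular on $T(-\z_0)$. That is exactly why $T(-\z_0)$ appears in the statement, and why the paper later singles out the physical branch (see the discussion after Theorem~\ref{thm:6.5}).

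Your parenthetical ``strictly, only the branch point $\zeta=-1$ matters'' is therefore wrong. Since you keep $T(-\z_0)$ excluded anyway, the conclusion of the lemma is unaffected.
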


Let us note that similar analyticity statements are true for the normal derivative of $G$. Let $\n$ denote the outwards pointing normal vector on $\partial \mathbb{S}$, as shown in Figure \ref{fig:formulation-fig}, right. Using the formula $\partial_{\n}G(\z;\y) = \n \cdot \nabla G(\z;\y)$ and the fact that  
    \begin{align}
        \frac{d}{d\tau}P_{\lambda}(\tau)=\frac{\lambda(P_{\lambda-1}(\tau) - \tau P_{\lambda}(\tau))}{1-\tau^2}, \label{e:normalD}
    \end{align}
 we find that, for fixed $\y \in\S^2$, whenever $G(\z;\y)$ is analytic in a region $U \subset \mathcal{S}^2_{\c}\setminus (T(\y)\cup T(-\y))$, the normal derivative $\partial_{\n}G(\z;\y)$ is analytic in $U$ too. Moreover, $\partial_{\n} G(\z;\y)$ solves the CLBE in $\z$. 

It is also possible to analytically continue $\partial_{\n}G(\z;\y)$ in the $\y$ variable. This is discussed in more detail in Section \ref{sec:AC03} and the Appendix \ref{app:diffSph}.
The complexification of the inhomogeneous Equation \eqref{eq.Ch02--GreensFunctionGPDE} is not straightforward, however, and will be discussed in more detail at the end of Section \ref{sec:AC03}.

\subsection{Analytic continuation of the scattered field $u$}\label{sec:AC02}

Akin to how the Green's function $G(\x;\x_0)$ admits an analytical continuation $G(\z;\z_0)$ onto $\mathcal{S}^2_{\c} \setminus (T(\z_0) \cup T(-\z_0))$, we will now show that the scattered field $u(\x)$, introduced in Section \ref{sec:pbformulationsphere} and solution to the boundary value problem \eqref{e:HLB}-\eqref{e:ID}, can be analytically continued onto $\mathcal{S}^2_{\c} \setminus (T(\mathbb{S}) \cup T(-\mathbb{S}))$. Here, we have used the notation 
    \begin{align}
        T(X) = L_+(X) \cup L_-(X), ~ \text{ with } ~ L_{\pm}(X) =\bigcup_{\x \in X} L_{\pm}(\x)
    \end{align}
 for $X = \mathbb{S}$ and $X= - \mathbb{S}$.   
 Before we proceed with the formulation of the Theorem, let us note that whether we may analytically continue $u$ onto the set $T(-\mathbb{S})$ depends on the choice of path for analytical continuation. We will discuss this in more detail after the formulation, and proof, of the following Theorem.

 \begin{theorem}\label{thm:6.5}
     Let $\x \in \mathcal{S}^2 \setminus (\mathbb{S} \cup - \mathbb{S})$, $\z \in \mathcal{S}^2_{\c} \setminus (T(\mathbb{S}) \cup T(-\mathbb{S}))$, and consider a path 
        \[\cc:[0,1] \to \mathcal{S}^2_{\c} \setminus (T(\mathbb{S}) \cup T(-\mathbb{S}))\] 
    with $\cc(0)=\x$ and $\cc(1) = \z$.\footnote{Since $\mathbb{S} \cap -\mathbb{S} = \emptyset$, and $\mathbb{S}$ is simply connected, such path always exists, as can be shown using Segre embedding \citep[Chapter 4]{griffiths1984principles}.}  Let us denote by $G(\cc(t);\y)$ and $ \partial_{\n} G(\cc(t);\y)$ the analytical continuation of $G(\x;\y)$ and $ \partial_{\n} G(\x;\y)$ along $\cc$. Then, the formula 
        \begin{align}
            u(\cc(t)) = \int_{\partial \mathbb{S}} [u(\y) \partial_{\n} G(\cc(t);\y) - G(\cc(t);\y) \partial_{\n} u(\y)] d\ell \label{e:6.9}
        \end{align}
       provides the analytical continuation of $u$ along $\cc$. We will also write $u(\z)$ instead of $u(\cc(1))$ whenever the choice of path taken for analytic continuation is either clear, or not relevant. The function $u(\z)$ solves the CLBE \eqref{e:f007} in $\mathcal{S}^2_{\c} \setminus (T(\mathbb{S}) \cup T(-\mathbb{S}))$.
 \end{theorem}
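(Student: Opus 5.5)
The plan is to show that the right-hand side of \eqref{e:6.9} depends analytically on $t$ along the path. It agrees with $u(\x)$ at $t=0$ by Green's third identity \eqref{e:New002}. It is obtained as a holomorphic function in a neighbourhood of each point of $\cc$, and these local holomorphic functions glue consistently. Hence it is, by definition, the analytic continuation of $u$ along $\cc$.

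\emph{Step 1: agreement near $t=0$.} For $\x\in\S^2\setminus(\mathbb{S}\cup-\mathbb{S})$, Green's third identity \eqref{e:Greens3rdSphere} with $\tilde\Gamma=\partial\mathbb{S}$ gives $u(\x)$ as the integral in \eqref{e:6.9}. Since $\x\notin \pm\partial\mathbb{S}$, by Lemma~\ref{lem:G} the kernels $G(\cdot;\y)$ and $\partial_{\n}G(\cdot;\y)$ are holomorphic in a complex neighbourhood $V_0$ of $\x$ in $\S^2_{\c}$, uniformly in $\y\in\partial\mathbb{S}$. Note that $\x\in T(\pm\y)$ forces $\x=\pm\y$ for real points, since $\x\cdot\y=1$ on $\S^2$ means $\x=\y$. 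The integral therefore defines a holomorphic function on $V_0$ whose restriction to $V_0\cap\S^2$ is $u$.

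\emph{Step 2: continuation along $\cc$.} Fix $t\in[0,1]$. Since $\cc(t)\notin T(\mathbb{S})\cup T(-\mathbb{S})$, and $\partial\mathbb{S}$ is compact with the sets $T(\pm\y)$ depending continuously on $\y$, there is a small ball $V_t$ around $\cc(t)$ disjoint from $T(\y)\cup T(-\y)$ for every $\y\in\partial\mathbb{S}$. The key point is that the singular set is the closed set $\{\z:\z\cdot\y=\pm1,\ \y\in\partial\mathbb{S}\}$, which is the image of a compact set under a continuous condition.

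On $V_t$, each branch of $G(\z;\y)$ and $\partial_{\n}G(\z;\y)$ obtained by continuation along $\cc$ is holomorphic. The branch depends continuously on $\y$, because the continuation of $P_\lambda(-\z\cdot\y)$ along $\cc$ is just the continuation of $P_\lambda$ along the path $\tau(s)=-\cc(s)\cdot\y$ in $\mathbb{C}\setminus\{\pm1\}$. These paths vary continuously in $\y$ and never hit $\pm1$, so they are homotopic in that domain and select consistent branches. The integrand is then jointly continuous and holomorphic in $\z$. Differentiation under the integral sign (or Morera's theorem) shows the integral is holomorphic on $V_t$.

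\emph{Step 3: gluing and the CLBE.} By compactness, finitely many $V_{t_j}$ cover $\cc$. On overlaps, the integrals use the same branches, since they come from the same continuation of the kernels along $\cc$, so they coincide. This chain of function elements starting from Step 1 is exactly the analytic continuation of $u$ along $\cc$. Finally, applying $\tilde\Delta_{\c}+\lambda(\lambda+1)$ under the integral sign gives zero, because by Lemma~\ref{lem:G} and the remark after it both $G(\cdot;\y)$ and $\partial_{\n}G(\cdot;\y)$ solve the CLBE there.

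The main obstacle is Step 2: making rigorous that the branch choice of the multivalued kernel along $\cc$ is uniform in $\y\in\partial\mathbb{S}$. I would handle it through the reduction to one-variable continuation of $P_\lambda$ along the homotopic family of paths $-\cc(s)\cdot\y$ in $\mathbb{C}\setminus\{\pm1\}$.
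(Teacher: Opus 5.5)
Your proposal is correct and follows essentially the same route as the paper. Green's third identity handles $t=0$. The kernels $G(\cc(t);\y)$ and $\partial_{\n}G(\cc(t);\y)$ then stay regular for every $\y\in\partial\mathbb{S}$ because $\cc$ avoids $T(\mathbb{S})\cup T(-\mathbb{S})$, and the CLBE is inherited from the kernels. The paper phrases the regularity step as the singular points $\pm\Psi_{\pm}(\cc(t))$ never reaching $\partial\mathbb{S}$, which is equivalent to your condition $\cc(t)\cdot\y\neq\pm 1$ via Lemma~\ref{lem:charac}. Your compactness argument for a uniform neighbourhood and your homotopy-lifting argument for a $\y$-uniform branch of $P_\lambda$ just make explicit details the paper leaves implicit.
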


    \begin{proof}
        Consider the formula \eqref{e:6.9}. For $t=0$, this formula is simply Green's third identity \eqref{e:New002}. 
        Now,
        for $t\in [0,1]$, the singularities of $G(\cc(t);\y)$ on $\mathcal{S}^2$ are given by $\pm \Psi_+(\cc(t))$ and $\pm \Psi_-(\cc(t))$, where the singularities located at $-\Psi_+(\cc(t))$ and $- \Psi_-(\cc(t))$ need to be taken into account due to the fact that $G(\z;\y)$ is generally singular for $\z \in T(-\y)$.  The same statement is true for the normal derivative $\partial_{\n} G(\cc(t);\y)$, taken with respect to $\y$.
        Now, $\cc$ never hits $T(\mathbb{S})$ or $T(-\mathbb{S})$. Thus, according to points 1 and 2 of Lemma \ref{lem:charac}, these singularities of $G(\cc(t);\y)$ will never hit $\partial \mathbb{S}$, the contour of integration in \eqref{e:6.9}. Indeed, if, say, $\Psi_+(\cc(t))$ hits $\partial \mathbb{S}$ at $t=t^{\star}$, then necessarily $\cc(t^{\star}) \in L_+(\mathbb{S})\subset T(\mathbb{S})$, which is a contradiction. Therefore, we find that \eqref{e:6.9} establishes the sought-after analytical continuation of $u(\x)$.  The evolution of the singularities of $G$ on $\mathcal{S}^2$ as $\z$ moves along $\cc$ is schematically illustrated in Figure \ref{fig:AC01}.
        
        The statement that $u(\z)$ solves the CLBE in $\mathcal{S}^2_{\c} \setminus (T(\mathbb{S}) \cup T(-\mathbb{S}))$ now follows from the fact that $G(\z;\y)$ and $\partial_{\n}G(\z;\y)$ both solve the CLBE in the variable $\z \in \mathcal{S}^2_{\c} \setminus (T(\y) \cup T(-\y))$, as was discussed in the previous Section \ref{sec:AC01}. 
    \end{proof}

        \begin{figure}[h!]
            \centering
            \includegraphics[width=\linewidth]{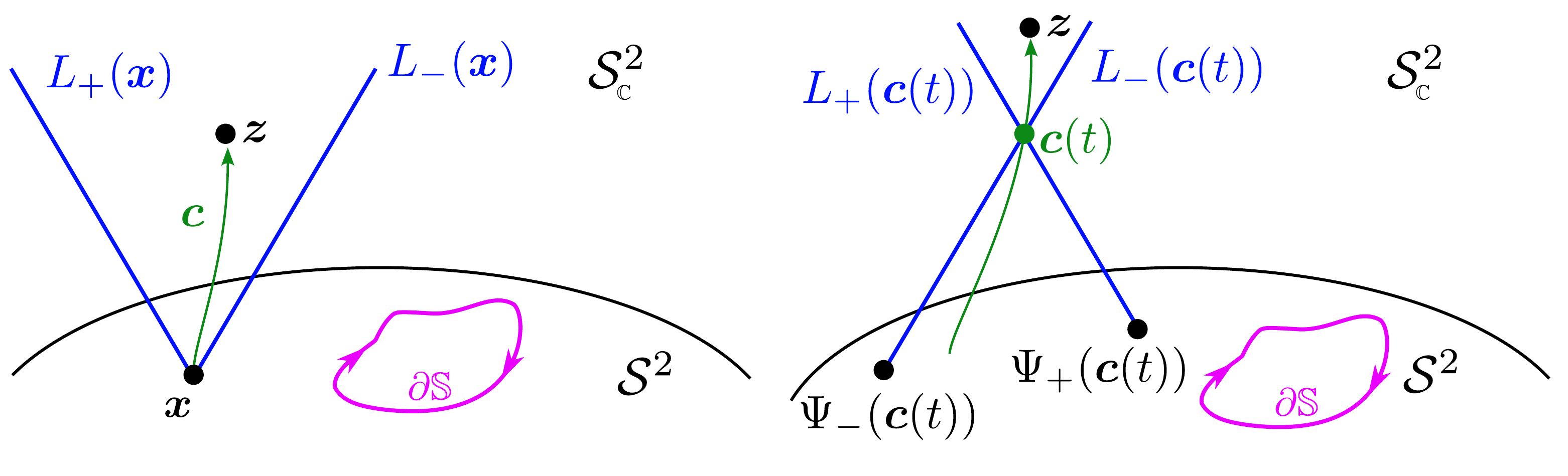}
            \caption{\small Diagrammatic illustration of the evolution of the singularities $\Psi_{\pm}(\cc(t))$ on $\mathcal{S}^2$ as we analytically continue along a path $\cc$ connecting $\x$ to $\z$. 
            Note that each characteristic line $L_{\pm}(\z)$ is of real dimension 2, while the ambient $\mathcal{S}^2_{\c}$ is of real dimension 4, so we necessarily need to suppress one dimension for visualisation.}
            \label{fig:AC01}
        \end{figure}


   The set $T(-\mathbb{S})$ does not always need to be excluded from the domain of analytical continuation: the function $u(\x)$ defined in Section \ref{sec:pbformulationsphere} is regular on $\mathcal{S}^2\setminus \mathbb{S}$. So, at least in a complex neighbourhood $U \subset \mathcal{S}^2_{\c}$ of $-\mathbb{S}$, there exists an analytical continuation $u(\z)$ of $u(\x)$. Indeed, whether we may analytically continue $u$ onto points $\z \in T(-\mathbb{S})$ depends strongly on the path $\cc$ chosen for analytical continuation. For some fixed $\y\in\partial\mathbb{S}$, we will say that $G(\cc(t);\y)$ belongs to the \textit{physical branch} of $G$ whenever the scalar path $-\cc(t)\cdot\y$ does not turn around $-1$, which is a branch point of $P_\lambda$. In this way, we ensure that $-\cc(t)\cdot\y$ remains on the principal branch of $P_\lambda$, which is only singular at $-1$, and regular at $+1$, unlike the other branches of $P_\lambda$, which are generally singular at $+1$, see \cite{Bateman1953}. This gives us a simple criterion for $u$: the analytical continuation $u(\z)$ is regular on $T(-\mathbb{S})$ whenever $\cc(t)$ is such that $-\cc(t)\cdot\y$ remains on the principal branch of $P_\lambda$  for all $\y\in\partial\mathbb{S}$. This is basically due to the fact that the singularities of the physical branch of $G$ never hit the contour of integration $\partial \mathbb{S}$ in this process, which is in general not true for arbitrary branches of $G$.
   
   Note that one can also extend the notion of physical branch of $G$ for complex second argument. Indeed if $G(\z;\z')$ is a continuation from $G(\x;\x')$ where $\x\to\z$ and $\x'\to\z'$ along some paths $\cc(t)$ and $\dd(t)$ respectively, then provided $-\cc(t)\cdot\dd(t)$ remains on the principal branch of $P_\lambda$, we say that $G(\z;\z')$ belongs to the physical branch of $G$.  In this case, as a function of $\z$, $G(\z;\z')$ is regular on $T(-\z')$.}

\subsection{Complexified Green's third identity}\label{sec:AC03}

So far within Section \ref{sec:analytical-continuation}, we have discussed the analyticity properties of $u$ within $\mathcal{S}^2_{\c}$. This will be instrumental in order to make sense of what a `far-field' expansion of $u$ looks like, as we will discuss in Section \ref{sec:farfield}. Before we proceed with this, however, let us note that there are still two important concepts, available on $\mathcal{S}^2$, that have been left out from our complexification discussion. These two concepts are Green's third identity \eqref{e:Greens3rdSphere} for a contour of integration lying on $\mathcal{S}^2_{\c} \setminus \mathcal{S}^2$, 
and the inhomogenous Laplace-Beltrami equation \eqref{eq.Ch02--GreensFunctionGPDE}. In this section, we will discuss how both of these concepts can be complexified.


    To achieve this, we will introduce the following $(\omega_1,\omega_2)$-coordinate system, which is centred at a given point $\z_0 \in \mathcal{S}^2_{\c}$  
    and parametrises a sufficiently small neighbourhood $U \subset \mathcal{S}^2_{\c}$ of $\z_0$. We will make precise the `sufficiently small' notion in Remark \ref{rem:w1w2} below. To construct this coordinate system, fix $\z_0 \in \mathcal{S}^2_{\c}$, choose any two representatives $\bdeta_-(\z_0)$ and $\bdeta_+(\z_0)$ amongst the family of vectors given in \eqref{e:ap004}, and note that the three vectors $\z_0, ~ \bdeta_-(\z_0),$ and $\bdeta_+(\z_0)$ span $\mathbb{C}^3$. So, in particular, we may represent \textit{any} $\z \in \mathcal{S}^2_{\c}$ as 
        \begin{align}
            \z = a \z_0 + b \bdeta_-(\z_0) + c \bdeta_+(\z_0), ~ \text{ for } ~ a,b,c \in \mathbb{C}.
        \end{align}
      Let us now assume that $\z$ is `close' to $\z_0$ (see Remark \ref{rem:w1w2}).

Using the algebraic relations $\z \cdot \z = \z_0 \cdot \z_0 = 1,$ $\bdeta_{\pm}(\z_0) \cdot \bdeta_{\pm}(\z_0) = \bdeta_{\pm}(\z_0) \cdot \z_0 =0$, we find that     
    \begin{equation}
    \z = (1+ \alpha) \z_0 + \omega_1 \boldsymbol{\eta}_-(\z_0) + \omega_2 \boldsymbol{\eta}_+(\z_0)
    \label{e:AS019.1}
    \end{equation}
    for  
\begin{equation}
\alpha = -1 + \sqrt{1 - 2 \omega_1 \omega_2 \boldsymbol{\eta}_+(\z_0) \cdot \boldsymbol{\eta}_-(\z_0)} 
\label{e:AS019.2}
\end{equation}
and 
\begin{equation}
\omega_1 = \frac{\z \cdot \boldsymbol{\eta}_+(\z_0)}{\boldsymbol{\eta}_+(\z_0) \cdot \boldsymbol{\eta}_-(\z_0)},
\qquad 
\omega_2 = \frac{\z \cdot \boldsymbol{\eta}_-(\z_0)}{\boldsymbol{\eta}_+(\z_0) \cdot \boldsymbol{\eta}_-(\z_0)}.
\label{e:AS019.3}
\end{equation}

\begin{remark}\label{rem:w1w2}
    When using the $(\omega_1,\omega_2)$ coordinate system, we assume that $\z$ is close to $\z_0$ in the sense that $\omega_1$ and $\omega_2$ are sufficiently small to ensure $|2 \omega_1 \omega_2 \bdeta_+(\z_0) \cdot\bdeta_-(\z_0)| <1$.  This guarantees that we may take a single-valued branch of the square root to represent $\alpha$ according to \ref{e:AS019.2}. This branch is taken according to $\sqrt{1}=1$, so that $\z \to \z_0$ as $\omega_{1,2} \to 0$. 
    When every $\z$ in a neighbourhood $U$ of $\z_0$ can be parametrised by  $(\omega_1,\omega_2)$ satisfying these conditions, $U$ will be referred to as `sufficiently small', and only such neighbourhoods $U$ will be considered within this Section \ref{sec:AC03}.
\end{remark}

The pair $(\omega_1,\omega_2)$ yields a local complex coordinate system near $\z_0$, such that $L_+(\z_0)$ corresponds, locally, to $\{\omega_1 \equiv 0\}$ and $L_-(\z_0)$ corresponds to $\{\omega_2 \equiv 0\}$, as illustrated schematically in Figure \ref{fig:AC02}. We will also write 
    \[(\omega_1,\omega_2) =  (\omega_1(\z_0),\omega_2(\z_0))\] 
to highlight the fact that this coordinate system is centred at $\z_0$. 

Next, consider the loop 
\begin{align}
      \tilde \Gamma_{\!\! \infty}(\z_0;\rho) \subset U \setminus (L_+(\z_0) \cup L_-(\z_0))
\end{align}
   with local $(\omega_1,\omega_2)$ coordinates defined according to 
  \begin{align}
      (\omega_1,\omega_2)=(\rho e^{-i \theta}, \rho e^{i \theta}) ~ \text{ for } ~ 0 \leq \theta \leq 2\pi ~\text{ and } ~\rho \in \mathbb{R} ~ \text{ small and positive.} \label{e:6.18}
  \end{align} 
    Whenever the precise choice of $\rho$ does not matter, or is clear from context, we will also allow ourselves to write 
    \begin{align}
        \tilde \Gamma_{\!\! \infty}(\z_0;\rho) \equiv \tilde \Gamma_{\!\! \infty}(\z_0).
    \end{align}
 This loop is schematically illustrated in Figure \ref{fig:AC02}. The rationale for depicting $\tilde \Gamma_{\!\! \infty}(\z_0)$ as a figure-of-eight loop is given in Remark \ref{rem:7.3}, and indicative of the loop's different orientations in the complex $\omega_1$ and $\omega_2$ planes (see Figure \ref{fig:AC02}, middle and right).
    \begin{figure}[h!]
        \centering
        \includegraphics[width=\linewidth]{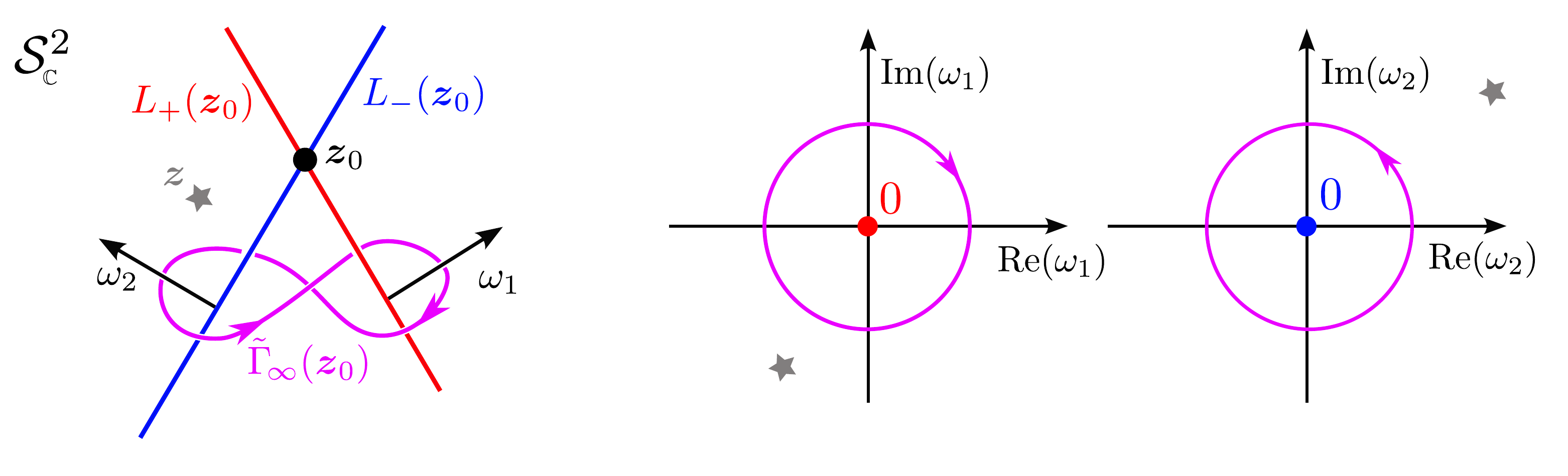}
        \caption{\small Left: Diagrammatic illustration of the $(\omega_1,\omega_2)$ coordinate system centred at $\z_0$, and loop $\tilde \Gamma_{\!\! \infty}(\z_0)$. Middle and Right: complex $\omega_1$ and $\omega_2$ planes. Here, we also see the projections of $\tilde \Gamma_{\!\! \infty}(\z_0)$ (loops) and $\z$ (stars) onto the complex $\omega_1$ and $\omega_2$ planes, respectively. The origins in the complex $\omega_1$ and $\omega_2$ planes correspond to the characteristics $L_+(\z_0)$ and $L_-(\z_0)$, respectively.}
        \label{fig:AC02}
    \end{figure}

Now, consider Green's third identity \eqref{e:Greens3rdSphere} on $\mathcal{S}^2$. Note that for any smooth function $f$, we have 
    \begin{align}
	\partial_{\n}f(\y)d\ell = \sin(\theta)\partial_{\theta}f d \varphi - \frac{1}{\sin(\theta)} \partial_{\varphi}f d \theta, \label{eq.Ch02--RealGradient1Form}
    \end{align} 
    and the right-hand-side of \eqref{eq.Ch02--RealGradient1Form} \textit{also} makes sense for complexified angular variables $(\theta,\varphi)$. Note that we can use \eqref{eq.Ch02--RealGradient1Form} in combination with \eqref{e:normalD} to analytically continue the normal derivative $\partial_{\n}G(\z;\y)$ in the $\y$-variable.

    Second, consider two smooth functions $f(\z')$ and $g(\z')$. 
    Assume that $\z'$ is parametrised according to \eqref{e:f003} upon symbolically replacing $(\theta,\varphi)$ by $(\theta',\varphi')$. We choose to work with $\z'$ instead of $\z$ to avoid confusion later on, regarding the variable of integration, which will be taken to be $\z'$. Let 
         \begin{align*}
          \G[f(\z'), g(\z')] = & \phantom{-}
            \sin(\theta') \left[ f(\z')\partial_{\theta'}g(\z') - g(\z')\partial_{\theta'}f(\z') \right] d \varphi'  \\
            &- \frac{1}{\sin(\theta')}\left[f(\z')\partial_{\varphi'}g(\z') - g(\z') \partial_{\varphi'}f(\z') \right] d\theta'. \label{e:6.24} \numberthis
        \end{align*}
    The operator $\G$ 
    has the following properties, which are proved in Appendix \ref{app:diffSph}.
        \begin{enumerate}
            \item Using differential form formalism, it can be shown that the definition of $\G$ is independent of the choice of coordinate system;
            \item Whenever $f$ and $g$ both solve the CLBE in some region, then $\G[f(\z'),g(\z')]$ is a \textit{closed holomorphic differential form} in that same region. 
        \end{enumerate}
   Now, using \eqref{eq.Ch02--RealGradient1Form} and \eqref{e:6.24}, we can rewrite Green's third identity \eqref{e:Greens3rdSphere} on $\mathcal{S}^2$ as 
    \begin{align}
        u(\x) = \int_{\tilde{\Gamma}} \G[u(\x'); G(\x;\x')], \label{e:6.25}
    \end{align}
    where we have replaced the variable $\y$ of integration by $\x'$. This change is only symbolic to make the transition to the formulation of the complexified Green's third identity \eqref{e:6.019} more clear. Before we can prove the complexified Green's third identity, we require the following lemma.


\begin{lemma}\label{lem:6.7}
        Let $\x \in \mathcal{S}^2$, and consider a small loop $\tilde{\Gamma} \subset \mathcal{S}^2$ localised about $\x$ as illustrated in Figure  \ref{fig:formulation-fig}, right. Let $U \subset \mathcal{S}^2_{\c}$ be a sufficiently small complex neighbourhood of $\x$, in the sense of Remark \ref{rem:w1w2}. Then, the loop $\tilde{\Gamma}$ is homotopic to $\tilde{\Gamma}_{\!\!\infty}(\x) \subset U \setminus (L_+(\x) \cup L_-(\x))$.
    \end{lemma}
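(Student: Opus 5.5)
The plan is to show that, near a real point $\x$, the real sphere $\mathcal{S}^2$ sits inside the $(\omega_1,\omega_2)$ chart as the "anti-diagonal" $\{\omega_2=\overline{\omega_1}\}$. It meets $L_+(\x)\cup L_-(\x)$ only at $\x$ itself. The homotopy can then be performed entirely inside $\mathcal{S}^2\setminus\{\x\}$, viewed through the coordinate $\omega_1$.

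\textbf{Step 1 (choice of representatives).} For $\x\in\mathcal{S}^2$ with real $(\theta,\varphi)$, take $\bdeta_+(\x)$ from \eqref{e:ap004} with $c=1$. Choose the representative $\bdeta_-(\x):=\overline{\bdeta_+(\x)}$. A direct check shows this is $-1$ times the vector in \eqref{e:ap004} with $c=1$, so it is an admissible representative. Then $\bdeta_+(\x)\cdot\bdeta_-(\x)=|\bdeta_+(\x)|^2>0$. By \eqref{e:AS019.3}, for any real $\z'\in\mathcal{S}^2\cap U$ we get $\omega_2=\overline{\omega_1}$, since $\z'\cdot\overline{\bdeta_+}=\overline{\z'\cdot\bdeta_+}$ and the denominator is real.

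\textbf{Step 2 (geometry of $\mathcal{S}^2$ in the chart).} Recall that $L_+(\x)=\{\omega_1=0\}$ and $L_-(\x)=\{\omega_2=0\}$ locally. Step 1 then gives $\mathcal{S}^2\cap U\cap(L_+(\x)\cup L_-(\x))=\{\x\}$. Next I will check that $\z'\mapsto\omega_1(\z')$ restricted to $\mathcal{S}^2\cap U$ is a local diffeomorphism onto a small disc around $0\in\mathbb{C}$. Its real differential at $\x$ is $\boldsymbol{v}\mapsto\boldsymbol{v}\cdot\bdeta_+/|\bdeta_+|^2$ on $T_{\x}\mathcal{S}^2$. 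This is injective because $\mathrm{Re}\,\bdeta_+$ and $\mathrm{Im}\,\bdeta_+$ span $T_{\x}\mathcal{S}^2$; they are orthogonal to $\x$ and independent, since $\bdeta_+\cdot\bdeta_+=0$. After shrinking, $\mathcal{S}^2\cap U$ is therefore a real disc $\{(\omega_1,\overline{\omega_1}):|\omega_1|<r_0\}$.

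\textbf{Step 3 (the homotopy).} Take $\tilde{\Gamma}$ small enough to lie in this disc. It encircles $\x$ once, so its image under $\omega_1$ is a loop in $\mathbb{C}\setminus\{0\}$ with winding number $\pm1$. Radial retraction in the punctured $\omega_1$-disc gives a homotopy to the circle $\omega_1=\rho e^{\mp i\theta}$, and the whole homotopy stays in $\mathcal{S}^2\setminus\{\x\}\subset U\setminus(L_+(\x)\cup L_-(\x))$. On the final circle, $\omega_2=\overline{\omega_1}$, which is exactly the loop \eqref{e:6.18} $\tilde{\Gamma}_{\!\!\infty}(\x;\rho)$.

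\textbf{Main obstacle.} The step I expect to be delicate is matching the orientation with \eqref{e:6.18}. I would check it at $\x=\xNP$, where $\bdeta_+=(-i,1,0)$ and so $\omega_1\propto -i(x_1+ix_2)$. The constant phase $-i$ only shifts $\theta$. The sense of winding of $\omega_1$ around $0$ must then be compared with the counterclockwise orientation of $\tilde{\Gamma}$ fixed in Section~\ref{sec:pbformulationsphere} (i.e.\ with $\n=\boldsymbol{t}\times\boldsymbol{e}_r$). The check at $\xNP$ extends to every real $\x$ by a rotation $A_{\x}\in\SO(3)$, using the continuity of $\bdeta_\pm$ in $\x$. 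If the senses disagree, the homotopy is to $\tilde{\Gamma}_{\!\!\infty}(\x)$ traversed with $\theta\mapsto-\theta$, and the conventions (or the sign of the residue convention) must be reconciled at that point.
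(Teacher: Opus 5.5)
Your route differs from the paper's. The paper keeps arbitrary representatives $\bdeta_\pm(\x)$ and retracts $U\setminus(L_+(\x)\cup L_-(\x))$ onto a torus. It then concludes via $\pi_1(\mathbb{T}^2)\cong\mathbb{Z}^2$ from the winding numbers of $\tilde\Gamma$ in the $\omega_1$- and $\omega_2$-planes. You instead choose $\bdeta_-=\overline{\bdeta_+}$. This turns $\mathcal{S}^2\cap U$ into the totally real slice $\{\omega_2=\overline{\omega_1}\}$ and makes $\tilde{\Gamma}_{\!\!\infty}(\x;\rho)$ itself a small circle on $\mathcal{S}^2$. The homotopy therefore lives in $\mathcal{S}^2\setminus\{\x\}$, with no torus retraction. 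Steps 1--3 are correct.

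You should add one line showing that the homotopy class of \eqref{e:6.18} does not depend on the representatives. Replacing $\bdeta_+,\bdeta_-$ by $a\bdeta_+,b\bdeta_-$ turns the loop into $(b\rho e^{-it},a\rho e^{it})$ in the old coordinates. This deforms back to the original by moving $a,b$ to $1$ in $\mathbb{C}\setminus\{0\}$. The paper's winding-number argument gets this for free and also works at non-real centres, which the proof of Lemma~\ref{lem:GreensComplex} reuses. Yours gives a fully explicit homotopy, but only at real $\x$.

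The genuine gap is the orientation. It is the only non-trivial content of the lemma, and you leave it conditional. Carry out your own check: with $\bdeta_+=(-i,1,0)$ and $|\bdeta_+|^2=2$, near $\xNP$ on $\mathcal{S}^2$ you get $\omega_1=-\tfrac{i}{2}(x_1+ix_2)=-\tfrac{i}{2}\sin\theta\,e^{i\varphi}$. Along $\tilde\Gamma$ the angle $\varphi$ increases, because $\n=\boldsymbol{t}\times\boldsymbol{e}_r$ must point away from $\x$ for \eqref{e:Greens3rdSphere} and \eqref{eq:newformRealCharacG} to hold with the signs written. So $\omega_1$ winds $+1$ along $\tilde\Gamma$. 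By contrast, \eqref{e:6.18} (loop parameter renamed $t$) has winding $-1$ in $\omega_1$. Concretely, with your representatives, \eqref{e:6.18} at $\xNP$ is the real circle $(2\rho\sin t,\,2\rho\cos t,\,\sqrt{1-4\rho^2})$, which is clockwise seen from outside. Since winding numbers do not depend on the representatives, the disagreement is genuine.

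So what your argument proves is that $\tilde\Gamma$ is homotopic to \eqref{e:6.18} traversed backwards, i.e.\ to $(\omega_1,\omega_2)=(\rho e^{it},\rho e^{-it})$. This is also the orientation that \eqref{e:6.31} requires. Use \eqref{e:AS033new} together with $\star d\omega_1=-i\,d\omega_1$ and $\star d\omega_2=i\,d\omega_2$ at $\xNP$. Then the limit in \eqref{e:6.31} equals $(n_1-n_2)/2$ for a loop with winding numbers $(n_1,n_2)$. That is $+1$ for the counterclockwise real loop but $-1$ for \eqref{e:6.18} as printed.

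The paper's proof only asserts that the directions ``can be checked explicitly'' to agree, so this sign slip is invisible there. To finish, state the conclusion with the corrected orientation of $\tilde{\Gamma}_{\!\!\infty}(\x)$ rather than leaving both options open.
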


    \begin{proof}
        The proof is purely topological. First, observe that $\tilde{\Gamma} \subset U \setminus (L_+(\x) \cup L_-(\x))$. Next, the neighbourhood $U \setminus (L_+(\x) \cup L_-(\x))$ retracts onto the 2-torus $\mathbb{T}^2 = \mathcal{S}^1 \times \mathcal{S}^1$. Both, $\tilde \Gamma_{\!\! \infty}(\x)$ and $\tilde{\Gamma}$ wind around each characteristic line $L_+(\x)$ and $L_-(\x)$ exactly once. By taking an explicit parametrisation of $\tilde{\Gamma}$, and upon computing the projections of $\tilde{\Gamma}$ onto the $\omega_1$ and $\omega_2$ coordinates, it can be checked explicitly that $\tilde{\Gamma}$ bypasses $L_{\pm}(\x)$ in the same direction as $\tilde{\Gamma}_{\!\! \infty}(\x)$. Using standard facts concerning the fundamental group $\pi_1(\mathbb{T}^2) \cong \mathbb{Z}^2$ of the 2-torus, as explained in \citep{Hatcher2000}, we conclude that $\tilde{\Gamma}$ and $\tilde \Gamma_{\!\! \infty}(\x)$ are homotopic in $U \setminus (L_+(\x) \cup L_-(\x))$. 
    \end{proof}


\begin{lemma}\label{lem:GreensComplex}
    Let $\x \in \mathcal{S}^2 \setminus (\mathbb{S} \cup  -\mathbb{S})$ and $\z \in \mathcal{S}^2_{\c} \setminus (T(\mathbb{S}) \cup  T(-\mathbb{S}))$. Let $\cc$ be a path in $\mathcal{S}^2_{\c} \setminus (T(\mathbb{S}) \cup  T(-\mathbb{S}))$ connecting $\x$ to $\z$.
    Then, 
        \begin{align}
            u(\cc(t)) = \int_{\tilde \Gamma_{\!\! \infty}(\cc(t))} \G[u(\z'), G(\cc(t);\z')]. \label{e:6.19}
        \end{align}
  We thus have the following \emph{complexified Green's third identity}
     \begin{align}
            u(\z) = \int_{\tilde \Gamma_{\!\! \infty}(\z)} \G[u(\z'), G(\z;\z')], ~ \text{ for } ~ \z \in \S^2_{\c} \setminus (T(\mathbb{S}) \cup T(-\mathbb{S})).\label{e:6.019}
        \end{align}
   When writing \eqref{e:6.019}, the branches are taken according to \eqref{e:6.19}.    
\end{lemma}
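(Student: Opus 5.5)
We will establish \eqref{e:6.19} by combining the real Green's identity \eqref{e:6.25}, the closedness of $\G$ stated above, the homotopy of Lemma~\ref{lem:6.7}, and the analytic continuation of $u$ from Theorem~\ref{thm:6.5}.

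\textbf{Step 1: the real starting point $t=0$.} By Stokes' theorem we may shrink the contour in \eqref{e:6.25}. Take $\tilde\Gamma$ to be the oriented boundary of $\mathbb{S}$, deformed inside $\S^2\setminus\mathbb{S}$ so that the region it bounds contains $\x$ and a small loop around $\x$. Since $u$ and $G(\x;\cdot)$ both solve the LBE away from $\mathbb{S}$ and $\x$, and $G(\x;\cdot)$ is regular at $-\x$ on the real sphere, we can deform further. The representation \eqref{e:6.25} then reduces to an integral over a small loop $\tilde\Gamma$ around $\x$, in the orientation of Figure~\ref{fig:formulation-fig}. This must be reconciled with the orientation convention inherited from \eqref{e:Greens3rdSphere}, and the sign checked against the delta source.

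Next we pass to the complex domain. In a sufficiently small neighbourhood $U$ of $\x$ (Remark~\ref{rem:w1w2}), $u(\z')$ is holomorphic because $U$ avoids $T(\mathbb{S})\cup T(-\mathbb{S})$. Likewise $G(\x;\z')$ is holomorphic in $U\setminus(L_+(\x)\cup L_-(\x))$, since $\z'\cdot\x=1$ exactly on $T(\x)$, and near $\x$ we are far from $T(-\x)$. Both functions solve the CLBE there, so $\G[u,G(\x;\cdot)]$ is a closed holomorphic 1-form on $U\setminus(L_+(\x)\cup L_-(\x))$. By Lemma~\ref{lem:6.7} and Stokes' theorem for closed forms (integrals of closed forms depend only on the homotopy class), the integral over $\tilde\Gamma$ equals the integral over $\tilde\Gamma_{\!\!\infty}(\x)$. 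This gives \eqref{e:6.19} at $t=0$.

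\textbf{Step 2: continuation along $\cc$.} Define $F(t)=\int_{\tilde\Gamma_{\!\!\infty}(\cc(t))}\G[u(\z'),G(\cc(t);\z')]$. For $s$ near $t$, the loop $\tilde\Gamma_{\!\!\infty}(\cc(t);\rho)$ is also a valid loop around $L_\pm(\cc(s))$, because characteristics vary continuously. It stays inside a neighbourhood where $u$ is holomorphic, which holds since $\cc(t)\notin T(\mathbb{S})\cup T(-\mathbb{S})$ and this set is closed, so a small ball around $\cc(t)$ avoids it. It also avoids $T(\cc(s))$ and $T(-\cc(s))$. Hence, locally in $s$, the loop may be frozen. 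By closedness and the homotopy argument of Lemma~\ref{lem:6.7} transplanted to the centre $\cc(s)$, $F(s)$ equals the integral over a fixed loop, with the integrand depending holomorphically on $\cc(s)$. This shows that $F$ is the analytic continuation, along $\cc$, of the $\z$-dependence of the integral, with $G(\cc(s);\z')$ taken on the branch obtained by continuation along $\cc$. Compactness of $[0,1]$ gives a finite chain of such overlapping neighbourhoods.

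It remains to identify $F$ with $u(\cc(t))$. Both are analytic continuations along $\cc$ of the same germ at $\x$: $u(\cc(t))$ by Theorem~\ref{thm:6.5}, and $F$ agrees with $u$ at $t=0$ by Step 1. Step 1 in fact gives agreement for all real points near $\x$, hence on an open subset of $\S^2$, which is a totally real submanifold of maximal dimension. Agreement there implies agreement of the holomorphic germs. Uniqueness of analytic continuation along a path then yields \eqref{e:6.19}, and \eqref{e:6.019} follows at $t=1$.

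\textbf{Main obstacle.} The delicate point is the local step. The loop must simultaneously encircle the moving lines $L_\pm(\cc(s))$ with the correct winding, stay in the holomorphy domain of $u$, and avoid the possibly singular set $T(-\cc(s))$ of non-physical branches of $G$. A closely related issue is checking that the $\rho$ in \eqref{e:6.18} can be chosen uniformly along the path. We would handle this by using the explicit $(\omega_1,\omega_2)$ coordinates centred at $\cc(t)$ and continuity of $\bdeta_\pm$ from \eqref{e:ap004}. We would also verify the orientation claim of Lemma~\ref{lem:6.7} consistently for all $t$.
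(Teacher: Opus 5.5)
Your proposal is correct and follows essentially the same route as the paper: the real loop in \eqref{e:6.25} is deformed into $\tilde\Gamma_{\!\!\infty}(\x)$ using closedness of $\G$ and Lemma~\ref{lem:6.7}, and the identity is then carried along $\cc$ by homotoping loops centred at nearby points and using analyticity of parameter-dependent integrals. Your identity-theorem argument on the totally real submanifold $\S^2$ and your compactness chain only make explicit what the paper leaves implicit; like the paper, you call $\G[u,G(\x;\cdot)]$ a closed form on $U\setminus(L_+(\x)\cup L_-(\x))$ even though $G(\x;\cdot)$ is logarithmically multivalued there, which is harmless because all the loops involved wind oppositely around $L_+(\x)$ and $L_-(\x)$, so $G$ has trivial monodromy along them.
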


\begin{proof}
       For each $t$, consider the coordinate system $(\omega_1(\cc(t)),\omega_2(\cc(t))$, centred at $\cc(t)$, and corresponding loop $\tilde \Gamma_{\!\! \infty}(\cc(t)) \subset U_t\setminus T(\cc(t))$ where the supscript ${}_{t}$ indicates that $U$ varies with $t$. For example, $U_t$ could be a small polydisc centred at $\cc(t)$. Now, since $\G[u(\z'),G(\x;\z')]$ is a closed differential form (Appendix \ref{App:Diff}), we find, according to Lemma \ref{lem:6.7}, Equation \eqref{e:6.25}, and the generalised theorem of Stokes, that  
            \begin{align}
                u(\x) = \int_{\tilde \Gamma_{\!\! \infty}(\cc(0))} \G[u(\z'),G(\x;\z')].
            \end{align}
        That is, we may continuously deform the contour of integration from $\tilde\Gamma$ to ${\tilde\Gamma_{\!\!\infty}(\x) = \tilde\Gamma_{\!\!\infty}(\cc(0))}$ without changing the value of the integral, and we have replaced the variable $\x'$ of integration in \eqref{e:6.25} by $\z'$ to emphasize the fact that $\z' \in \tilde\Gamma_{\!\!\infty}(\cc(0))$ is, generally, not real.

        Now, repeating the proof of Lemma \ref{lem:6.7} verbatim, it can be shown that for $\z_1$ sufficiently close to $\z_2$, the contours $\tilde \Gamma_{\infty}(\z_1)$ and $\tilde \Gamma_{\infty}(\z_2)$ are homotopic and consequently, by the generalised theorem of Stokes, and analyticity of integrals depending on a parameter, the formula 
            \begin{align}
                u(\cc(t)) = \int_{\tilde \Gamma_{\!\! \infty}(\cc(t))} \G[u(\z'),G(\cc(t);\z')]
            \end{align}
        defines the analytical continuation of $u$ along $\cc$.
    \end{proof}

\subsection{Towards a characterisation of a complexified source}

Let us now discuss the `complexification' of the inhomogeneous LBE \eqref{eq.Ch02--GreensFunctionGPDE}. Until the end of this section, we will work solely with $G$ for a source located at $\x_0\in\S^2$, so we do not need to consider the scatterer. For simplicity, we will consider a spherical coordinate system $(\theta, \varphi)$ centred at $\x_0$ (or we could equivalently assume without loss of generality that that $\x_0=\xNP$). We are particularly interested in finding a way to characterise the source term after complexification. In the real case this is done through the forcing term $\delta(\x-\x_0)$. It is important to remember that to find the Green's function $G(\x;\x_0)$ in terms of $P_\lambda$, we made two assumptions. The first is that the source is isotropic (i.e.\ it radiates in the same way in all direction) so $G(\x;\x_0)$ should be independent of $\varphi$. The second is that we imposed $\lambda\notin\mathbb{Z}$ so that no non-trivial solution of the LBE on the whole sphere exists. 

Using the asymptotic behaviour of $P_\lambda$ near $-1$, as shown in \cite{partAcomplexSphere}, we find that
\begin{align}
    G(\x;\x_0) &= \frac{1}{2\pi} \log(\theta)+\mathcal{O}(1) \text{ as } \theta\to0 \text{ (i.e.\ as } \x\to\x_0\text{)}.
\label{eq:smallthetaasymptG}
\end{align}
Integrating the LBE \eqref{eq.Ch02--GreensFunctionGPDE} on a small disc of radius $\rho$ centred at $\x_0$ with boundary $\tilde \Gamma(\x_0;\rho)$, using the divergence theorem, the main property of the Dirac delta function, and \eqref{eq:smallthetaasymptG}, we find that
\begin{align}
    \lim_{\rho \to 0} \int_{\tilde{\Gamma}(\x_0;\rho)} \partial_{\n} G(\x;\x_0) d \ell =1,  \label{e:6.30}
\end{align}
where the integration and the normal derivative are performed with respect to $\x$. With the notations introduced earlier in this section, we can rewrite it as
\begin{align}
    \lim_{\rho \to 0} \int_{\tilde{\Gamma}(\x_0;\rho)} \G[1,G(\x;\x_0)]=1,  \label{eq:newformRealCharacG}
\end{align}
where again $\x$ is the variable of integration. We can then think of \eqref{eq:newformRealCharacG} as a characterisation of the source. 

Our aim is to obtain a similar result that could characterise the complexification of the source in the absence of a clear way of complexifying the $\delta(\x-\x_0)$ forcing term.
Consider therefore the analytical continuation of $G(\z,\z_0)$ for some $\z,\z_0 \in \S_{\c}^2$ obtained along some path chosen such that we stay on the physical branch of $G$.

Using the definition \eqref{e:AS019.1} of the $(\omega_1,\omega_2)$ coordinate system centred at $\z_0$ in combination with the explicit formula \eqref{e:f010} and the asymptotic properties of $P_{\lambda}$, we find
    \begin{align}
    G(\z; \z_0) = \frac{1}{4\pi} \left(
    \log \omega_1 + \log \omega_2
    \right) + \mathcal{O}(1), ~ \text{ as } ~ \omega_{1,2} \to 0. \text{ (i.e.\ as } \z\to\z_0\text{)}.
    \label{e:AS033new}
    \end{align}
Using \eqref{e:AS033new} and the definition \eqref{e:6.18} of the contour $\tilde \Gamma_\infty$, direct integration leads to
    \begin{align}
        \lim_{\rho\to0} \int_{\tilde \Gamma_{\!\! \infty}(\z_0;\rho)} \G[1,G(\z;\z_0)]=1, \label{e:6.31}
    \end{align}
where the integration is with respect to $\z$. This can be thought off as a characterisation of the `complex source' $\z_0$. Of course $G(\z;\z_0)$ should also satisfy the CLBE
\begin{align}
    \left(\tilde \Delta_{\c} +\lambda(\lambda+1)\right) G(\z;\z_0)&=0 \text{ on } \S^2_{\c}\setminus T(\z_0).
\end{align}



\section{Far-field asymptotics of the scattered wave-field} \label{sec:farfield}

Having established analyticity of $u$ in $\mathcal{S}^2_{\c} \setminus (T(\mathbb{S}) \cup T(-\mathbb{S}))$, we can now legitimately try to understand the `far-field behaviour' of $u(\z)$ as $|\z| \to \infty$ within $\mathcal{S}^2_{\c} \setminus (T(\mathbb{S}) \cup T(-\mathbb{S}))$. 


To this end, fix a reference point $\x^{\rm ref} \in \mathbb{S}$ and introduce the complex  coordinate system $(\epsilon, \beta)$ as follows. We will explain the need for introducing $\x^{\mathrm{ref}}$ in Remark \ref{rem7.1} below.

Let $A_{\x^{\mathrm{ref}}}\in \SO(3)$ denote a real Euler rotation such that 
    \begin{align}
        A_{\x^{\mathrm{ref}}}(0,0,1) = \x^{\mathrm{ref}},
    \end{align}
and parametrise $\z \in \mathcal{S}^2_{\c} \setminus (T(\mathbb{S}) \cup T(-\mathbb{S}))$  according to
\begin{equation}
    \z = \epsilon^{-1}  A_{\x^{\mathrm{ref}}} \left( \cos \beta \, , \, \sin \beta \, , \, i \sqrt{1 - \epsilon^{2}} \right). \label{e:bdd002}
\end{equation}
As $\epsilon\to0$, the point $\z$ will be such that $|\z|\to\infty$, and we can actually associate this limit with a specific point of $\Xi$ given by $[\bdeta]$ for $\bdeta=A_{\x^{\mathrm{ref}}} \left( \cos \beta \, , \, \sin \beta \, , \, i  \right)$. This is discussed in more detail in \citep{partAcomplexSphere}. Here, we only note that the point $[\bdeta] \in \Xi$ can be interpreted as a direction along which we are going towards infinity. More precisely, as will be discussed after the proof of Theorem \ref{thm:7.1}, the point $[\bdeta]$ characterises a complex \textit{observation direction} relative to the reference point $\x^{\rm ref}$.


\begin{remark}\label{rem7.1}
    Using the theory of complex projective spaces, it can be shown that the singularities of the coordinate system \eqref{e:bdd002} located at $\beta = \pm i \infty$ describe the characteristic lines $L_{\pm}(\x^{\mathrm{ref}}) \subset T(\mathbb{S})$.
    So, rotating $(0,0,1)$ onto $\x^{\mathrm{ref}} \in \mathbb{S}$ guarantees that we may use the coordinate system \eqref{e:bdd002} to parametrise any $\z \in \mathcal{S}^2_{\c} \setminus (T(\mathbb{S}) \cup T(-\mathbb{S}))$ with $|\z|$ large: the corresponding coordinate singularities have been `removed' from $\mathcal{S}^2_{\c}$. 
\end{remark}

To emphasize the dependence of this parametrisation on the reference point, we shall write 
    \begin{align}
        u(\z) = u(\epsilon,\beta; \x^{\mathrm{ref}}) \label{e:7.3}
    \end{align}
 whenever $\z$ is parametrised according to \eqref{e:bdd002}. Since we let $|\z| \to \infty$, we consider only small $|\epsilon|<1$ and choose the branch of square root in \eqref{e:bdd002} according to $\sqrt{1}=1$. Note that in writing \eqref{e:7.3}, we have implicitly chosen a branch of $u$. This choice will be specified later, within the proof of Theorem \ref{thm:7.1}, and does not matter for the moment.
 
  For the moment, let $\x^{\mathrm{ref}}=(0,0,1)=\xNP$, and let $A_{\x^{\mathrm{ref}}} = A_{\xNP}= I_3$ be the $3\times3$ identity matrix. Consider the corresponding $(\epsilon,\beta)$ coordinate system according to \eqref{e:bdd002}. Then, by \citep[Appendix B]{partAcomplexSphere}, we have
\begin{equation}
    \tilde \Delta_{\c}  = (\epsilon^4 - \epsilon^2) \ptl^2_{\epsilon}
    + \epsilon^3 \ptl_{\epsilon} + \epsilon^2    \ptl^2_{\beta}. \label{e:f009}
\end{equation}
Therefore, using rotational invariance of the (complexified) Laplace-Beltrami operator, we find that the CLBE \eqref{e:f007} becomes, for any $\x^{\mathrm{ref}} \in \mathbb{S}$, rewritten as 
    \begin{align}
    \left[(\epsilon^4 - \epsilon^2) \ptl^2_{\epsilon}
    + \epsilon^3 \ptl_{\epsilon} + \epsilon^2    \ptl^2_{\beta} + \lambda(\lambda+1)\right]u(\epsilon,\beta; \x^{\mathrm{ref}}) =0. \label{e:7.3}
    \end{align}
The equation  \eqref{e:7.3} is separable and singular at $\epsilon =0$. So, we may pose a Frobenius-type ansatz, and look for solutions to \eqref{e:7.3} of the form
    \begin{align}
    u(\beta, \epsilon; \x^{\mathrm{ref}}) = \epsilon^{\nu} \sum_{n = 0}^{\infty}
    F_n(\beta; \x^{\mathrm{ref}}) \epsilon^n, \label{e:AS042}
    \end{align}
    for some unknown $\nu$ to be determined.

Assuming without loss of generality that $F_0\neq0$, substituting \eqref{e:AS042} into \eqref{e:7.3}, and denoting $Q(a) = a(a-1) - \lambda(\lambda + 1)$, we obtain the recursive equation
\begin{equation}
Q(\nu + n +2) F_{n+2} (\beta; \x^{\mathrm{ref}}) 
=
(\nu + n)^2 F_n  (\beta; \x^{\mathrm{ref}})  + \ptl^2_\beta F_n  (\beta; \x^{\mathrm{ref}}) ,
\label{e:AS043}
\end{equation}
and we find that there are two possible values for $\nu$, which we denote by $\nu_{1}$ and $\nu_2$: 
\begin{equation}
\nu_1 = \lambda + 1, 
\qquad
\nu_2 = - \lambda.
\label{e:AS045}
\end{equation}
We also find that $F_{2n+1}(\beta)\equiv0$ for all $n\in\mathbb{N}$.
We thus conclude that \textit{any} solution $u(\z)$ to \eqref{e:7.3}, for $|\epsilon|$ sufficiently small, can be expressed as 
\begin{align*}
u(\beta, \epsilon; \x^{\mathrm{ref}}) = &\phantom{+}\epsilon^{\lambda+1}\left[F_{1,0}(\beta; \x^{\mathrm{ref}}) +\sum_{n=1}^{\infty}F_{1,2n}(\beta;\x^{\mathrm{ref}}) \epsilon^{2n} \right] \\
& +\epsilon^{-\lambda} \left[F_{2,0}(\beta;\x^{\mathrm{ref}}) + \sum_{n=1}^{\infty} F_{2,2n}(\beta; \x^{\mathrm{ref}}) \epsilon^{2n} \right] 
 \numberthis
\label{e:AS046}
\end{align*}
for small $|\epsilon|$. Moreover, because of the recurrence relation \eqref{e:AS043}, all the $F_{j,2n}$ terms are fully determined by $F_{j,0}$, where $j\in\{1,2\}$. This is reminiscent of what is observed in the planar setting, as will be discussed in Section  \ref{sec:analogies}. 
Note, moreover, that depending on the value of $\lambda$, either term may decay, or grow, as $\epsilon \to 0$.

\begin{theorem}\label{thm:7.1}
    Consider $p \in \Xi \setminus (\Phi_+(\mathbb{S}) \cup \Phi_-(\mathbb{S}))$, and let $\x^{\mathrm{ref}} \in \mathbb{S}$. Let
     \begin{align}
             p(\beta) = \left[A_{\x^{\mathrm{ref}}}( \cos(\beta), \sin(\beta), i)\right],  \quad  A(\lambda) &= \frac{2^{- \lambda - 1} \Gamma \left( - \frac{1}{2} -\lambda
		\right)}{\sqrt{\pi} \Gamma (- \lambda)}, \label{e:7.09}
        \end{align}
    and let $\mathcal{D}_1$ and $\mathcal{D}_2$ denote the directivities introduced in \eqref{e:New14} and \eqref{e:New18}.
    Then, we have
        \begin{align}
            F_{1,0}(\beta; \x^{\mathrm{ref}})= \frac{A(\lambda)}{4 \sin(\pi \lambda)}\mathcal{D}_1(p(\beta);\x^{\mathrm{ref}}), \quad F_{2,0}(\beta; \x^{\mathrm{ref}})= \frac{A(-\lambda-1)}{4 \sin(\pi \lambda)}\mathcal{D}_2(p(\beta);\x^{\mathrm{ref}}). \label{e:7.8}
        \end{align} 
     Note that, according to the results of \citep[Section 2]{partAcomplexSphere}, any point $p$ on $\Xi \setminus (\Phi_+(\mathbb{S}) \cup \Phi_-(\mathbb{S}))$ may be parametrised according to  $p= p(\beta)$ for some appropriate $\beta \in \mathbb{C}$, so the chosen parametrisation of $p$ does not lose any generality. 

     Moreover, due to \eqref{e:AS043}, the directivities $\mathcal{D}_1$ and $\mathcal{D}_2$ fully determine all the coefficients $F_{1,2n}$ and $F_{2,2n}$, and thus $u$, in the Expansion \eqref{e:AS046}.
\end{theorem}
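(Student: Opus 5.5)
Start from the analytically continued Green's third identity of Theorem~\ref{thm:6.5}:
\[
u(\z)=\int_{\partial\mathbb{S}}\big[u(\y)\partial_{\n}G(\z;\y)-G(\z;\y)\partial_{\n}u(\y)\big]d\ell,
\]
with $\z$ parametrised by \eqref{e:bdd002}. Expand the integrand as $\epsilon\to0$ uniformly in $\y\in\partial\mathbb{S}$, and integrate term by term. The leading coefficients of $\epsilon^{\lambda+1}$ and $\epsilon^{-\lambda}$ are then compared with $F_{1,0}$ and $F_{2,0}$ in \eqref{e:AS046}. Since the Frobenius expansion is unique (generic $\lambda$, $\nu_1-\nu_2=2\lambda+1\notin\mathbb{Z}$), matching the two leading exponents identifies them.

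The key computation uses $G(\z;\y)=\frac{1}{4\sin\pi\lambda}P_\lambda(-\z\cdot\y)$. For $\y$ real,
\[
-\z\cdot\y=-\epsilon^{-1}\,\y\cdot A_{\x^{\rm ref}}(\cos\beta,\sin\beta,i\sqrt{1-\epsilon^2})=-\epsilon^{-1}\,\y\cdot\bdeta+\mathcal{O}(\epsilon),
\qquad \bdeta=A_{\x^{\rm ref}}(\cos\beta,\sin\beta,i).
\]
So the argument tends to infinity like $\epsilon^{-1}$, with $\y\cdot\bdeta\neq0$ because $p(\beta)\notin\Phi_\pm(\mathbb{S})$, hence $\y$ avoids $\Psi_\pm(p)$. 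I would then use the standard large-argument expansion of the Legendre function,
\[
P_\lambda(\tau)=A(\lambda)\,(2\tau)^{\ldots}\tau^{-\lambda-1}\big(1+\mathcal{O}(\tau^{-2})\big)+A(-\lambda-1)\,\tau^{\lambda}\big(1+\mathcal{O}(\tau^{-2})\big),
\]
normalised so that the coefficients are exactly $A(\lambda)$ and $A(-\lambda-1)$ as in \eqref{e:7.09}. Substituting $\tau=-\epsilon^{-1}\y\cdot\bdeta$ gives $\tau^{-\lambda-1}=\epsilon^{\lambda+1}(-\y\cdot\bdeta)^{-\lambda-1}$. With $\x^{\rm ref}\cdot\bdeta=i$, the normalisation at $\x^{\rm ref}$ gives
\[
(-\y\cdot\bdeta)^{-\lambda-1}=\Big(-i\frac{\y\cdot\bdeta}{\x^{\rm ref}\cdot\bdeta}\Big)^{-\lambda-1}=w_1(\y,p;\x^{\rm ref}),
\]
and similarly $\tau^{\lambda}$ yields $\epsilon^{-\lambda}w_2(\y,p;\x^{\rm ref})$. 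The corrections are $\mathcal{O}(\epsilon^2)$ relative, which is consistent with $F_{j,2n+1}=0$. Normal derivatives with respect to $\y$ commute with this expansion, since the expansion is holomorphic and uniform in $\y$ near $\partial\mathbb{S}$. Inserting into Green's identity and comparing with \eqref{e:New14}--\eqref{e:New18} gives \eqref{e:7.8}.

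The main obstacle is branch bookkeeping. Three choices must agree:
\begin{itemize}
\item the branch of $u(\epsilon,\beta;\x^{\rm ref})$, which is determined by a path $\cc$ from real $\x$ to large $\z$;
\item the branch of $P_\lambda$ near infinity, where the phases $(-1)^{\lambda}$ must be absorbed;
\item the principal branch \eqref{eq:planewavesbranch} of $w_{1,2}$, fixed by $w_1(\x^{\rm ref})=e^{i\pi(\lambda+1)/2}$.
\end{itemize}
I would choose the path of continuation, and hence define the branch of $u$, precisely so that $-\z\cdot\y$ approaches infinity along the ray on which the Legendre asymptotic formula with these coefficients holds and the powers reduce to the principal plane-wave branches. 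Concretely, first take $\y=\x^{\rm ref}$ and verify that the phase $(-\x^{\rm ref}\cdot\bdeta)^{-\lambda-1}=(-i)^{-\lambda-1}$ matches $w_1(\x^{\rm ref},p;\x^{\rm ref})$. Then extend to all $\y\in\partial\mathbb{S}$ by continuity, since $\y\cdot\bdeta$ never vanishes on $\mathbb{S}$.

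Uniformity of the expansion on the compact set $\partial\mathbb{S}$ follows from the same nonvanishing. Finally, the last claim of the theorem, that the directivities determine all $F_{j,2n}$, follows directly from the recursion \eqref{e:AS043}, since $Q(\nu_j+2n)\neq0$ for $n\ge1$ under the restrictions on $\lambda$.
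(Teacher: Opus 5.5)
Your proposal follows the paper's proof essentially step for step: substitute the $\epsilon\to0$ expansion of $G(\z;\y)$ into the continued Green's identity of Theorem~\ref{thm:6.5}, recognise $w_{1,2}(\y,p(\beta);\x^{\mathrm{ref}})$ via $\x^{\mathrm{ref}}\cdot\bdeta=i$, fix the branch at one point and continue over $\mathbb{S}$, and match the $\epsilon^{1+\lambda}$ and $\epsilon^{-\lambda}$ coefficients with \eqref{e:AS046}. The only differences are these: you derive the expansion of $G$ directly from $P_\lambda(\tau)\sim A(\lambda)\tau^{-\lambda-1}+A(-\lambda-1)\tau^{\lambda}$ instead of citing the companion paper; the stray factor $(2\tau)^{\ldots}$ in your display should just be dropped, since $A$ already contains the powers of $2$. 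You also state explicitly the assumption $\lambda\notin\tfrac12+\mathbb{Z}$, which is genuinely needed for $A(\lambda)$ and $A(-\lambda-1)$ to be finite and for the matching and the recursion to work, but which is not among the restrictions of Remark~\ref{Rem.Ch02--ValueLambda} and so must be added.
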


 \begin{proof}
       Let 
       \begin{align}
       \bdeta(\beta)= A_{\x^{\mathrm{ref}}}(\cos(\beta), \sin(\beta),i), ~ \text{ so }~p(\beta) = [\bdeta(\beta)], \label{e:7.10}
       \end{align}
      and let $\y \in \partial \mathbb{S}$. As $\epsilon \to 0$, following the arguments detailed in \citep[Section 3(a)]{partAcomplexSphere}, we find that
    \begin{align*}
        G(\z;\y) =&\phantom{+}
	 \frac{A(\lambda) }{4\sin (\pi \lambda)}  
	\epsilon^{1+\lambda} \left[w_1(\y, p(\beta); \x^{\mathrm{ref}})+ \mathcal{O}(\epsilon^2) \right]\\
    & + 
	\frac{A(-\lambda-1) }{4\sin (\pi \lambda)} \epsilon^{-\lambda} \left[w_2(\y, p(\beta); \x^{\mathrm{ref}}) + \mathcal{O}(\epsilon^2)
	\right] \numberthis \label{e:7.9}
    \end{align*}
    where $w_1$ and $w_2$ are the plane-waves given in \eqref{def:planewavesderaph}.

  In \eqref{e:7.9}, we choose the branch of $G(\z;\y)$ as follows. For some fixed $\y=\y^{\star} \in \partial \mathbb{S}$, the expansion \eqref{e:7.9} is valid for the physical branch of $G(\z; \y^{\star})$. 
  The branch for general $\y \in \partial \mathbb{S}$ is then obtained by continuity along $\partial \mathbb{S}$. Now, according to Theorem \ref{thm:6.5}, the Green's third identity
        \begin{align}
            u(\z) = \int_{\partial \mathbb{S}} \G[u(\y), G(\z;\y)] \label{e:7.11}
        \end{align}
    is valid for $\z \in \mathcal{S}^2_{\c} \setminus (T(-\mathbb{S})\cup T(\mathbb{S}))$. In  \eqref{e:7.11}, the branch of $u(\z)$ is determined by the branch of $G(\z;\y)$ according to \eqref{e:6.9}. 
    We use \eqref{e:7.9} within \eqref{e:7.11} to obtain 
        \begin{align*}
            u(\epsilon,\beta) = & \phantom{+} \frac{A(\lambda)}{4 \sin(\pi \lambda)} \epsilon^{1+\lambda} \left[\int_{\partial \mathbb{S}} \G[u(\y), w_1(\y,p(\beta); \x^{\mathrm{ref}})] + \mathcal{O}\left(\epsilon^{2}\right) \right]\\
            &+ \frac{A(-\lambda-1)}{4 \sin(\pi \lambda)} \epsilon^{-\lambda}\left[\int_{\partial \mathbb{S}} \G[u(\y), w_2(\y,p(\beta); \x^{\mathrm{ref}})] + \mathcal{O}\left(\epsilon^{2}\right)\right]. \label{e:7.12} \numberthis
        \end{align*}
      where $\G$ is defined according to \eqref{e:6.24}. 
      Since $\x^{\mathrm{ref}} \in \mathbb{S}$, we may use the definitions \eqref{e:New14} and \eqref{e:New18} of $\mathcal{D}_1$ and $\mathcal{D}_2$ within \eqref{e:7.12}. This gives
        \begin{align*}
            u(\epsilon,\beta; \x^{\mathrm{ref}}) = &\phantom{+}\epsilon^{1+\lambda}  \left[\frac{A(\lambda)}{4 \sin(\pi \lambda)} \mathcal{D}_{1}(p(\beta); \x^{\mathrm{ref}}) 
            + \mathcal{O}\left(\epsilon^{2}\right) \right]\\
            &+ \epsilon^{-\lambda}\left[\frac{A(-\lambda-1)}{4 \sin(\pi \lambda)} \mathcal{D}_{2}(p(\beta); \x^{\mathrm{ref}}) + \mathcal{O}\left(\epsilon^{2}\right)\right]. \label{e:7.13} \numberthis
        \end{align*}
        The equalities given in \eqref{e:7.8} now follow from comparing the $\epsilon^{1+\lambda}$ and $\epsilon^{-\lambda}$ coefficients in  \eqref{e:AS046} with \eqref{e:7.13}. 
    \end{proof}

Due to \eqref{e:7.13}, the point $p \in \Xi$ can be interpreted as an observation direction \textit{relative to the reference point} $\x^{\rm ref}$. Moreover, $p$ \textit{also} serves as a spectral parameter for integration in the plane-wave representations  \eqref{e:New13}, \eqref{e:New014}, and \eqref{e:PWNew023raph}. These observations are reminiscent of the planar setting, as will be discussed in Section \ref{sec:analogies} below.

\begin{remark}\label{rem:7.3}
            Let $\epsilon, \beta \in \mathbb{C}$. Consider the following objects.
            \begin{enumerate}
                \item The loop $\tilde\Gamma_{\!\!\infty}(\z) = \tilde\Gamma_{\!\!\infty}(\epsilon,\beta; \x^{\mathrm{ref}})$, centred at the point $\z$, which is parametrised according to \eqref{e:bdd002};
                \item The loop $\tilde{\Gamma}$ used in the formulation of the real Green's third identity \eqref{e:Greens3rdSphere};
                \item The figure-of-eight loop $\gamma_{\infty}$ shown in Figure \ref{fig:PWApp01}, right.
            \end{enumerate}
          Using the theory of complex projective spaces (cf.\! Remark \ref{rem6.3}), it can be shown that, as $\epsilon \to 0$, $\tilde{\Gamma}_{\!\! \infty}(\epsilon,\beta; \x^{\mathrm{ref}})$ and $\gamma_{\infty}$ are homotopic when considered as loops within $\hat{\mathcal{S}}^2_{\c}$. Here, all homotopies are such that no singularities of corresponding integrands are ever hit, \textit{except for $\Xi$,} where $u$'s and $G$'s asymptotics allow for regularisation of the 1-form $\G[u(\z'); G(\z;\z')]$ as $|\z'| \to \infty$. This asymptotic procedure actually recovers the plane-wave representation \eqref{e:PWNew023}, which is why we depict $\tilde\Gamma_{\!\!\infty}(\z_0)$ as a figure-of-eight loop in Figure \ref{fig:AC02}. In this sense, the plane-wave representation  \eqref{e:PWNew023} may be viewed as `dual' of the Green's third identity \eqref{e:Greens3rdSphere}, whereas the plane-wave representations \eqref{e:New16} and \eqref{e:New17} are, instead, reminiscent of a Fourier-transform. 
    \end{remark}

\section{A summary of analogies between scattering on a plane and scattering on a sphere}\label{sec:analogies}

With this final section, we aim to summarise all the analogies between the planar and the spherical case that have been highlighted throughout the article.    

    \paragraph*{Green's third identities}
Consider the Green's third identities \eqref{e:Greens3rdSphere} and \eqref{e:AS002}. As was pointed out in Section \ref{sec:motivation} these formulae look conceptually quite similar. This similarity is even more prominent when rewriting \eqref{e:Greens3rdSphere} and \eqref{e:AS002} using differential forms, as discussed in Appendix \ref{app:diffSph}, and similar formulae are also valid on Hyperbolic space \citep{Borthwick2016, Chossat2020}. Moreover, let us note that both, the spherical setting and the planar setting admit a complexified version of Green's third identity. Such complexified Green's third identity in the planar setting is discussed in detail in \citep[Appendix B]{AssierShanin2021AnalyticalCont}, which requires analytical continuation of both, the wave-field and Green's function onto the plane's complexification, which is $\mathbb{C}^2$.


\paragraph*{Directivity patterns} 

Consider the directivities $\mathcal{D}_1(p;\x^{\rm ref})$ and $\mathcal{D}_2(p;\x^{\rm ref})$, defined in \eqref{e:New14} and \eqref{e:New18}, and the directivity $\mathcal{D}(\theta)$ introduced in Section \ref{sec:motivation}. These directivities share some important conceptual similarities, which we now highlight.

First, we note that in both cases, the directivities are represented via conceptually similar contour integrals on the physical domain (see \eqref{e:AS008} and \eqref{e:New14}--\eqref{e:New18}). Moreover, they fully determine the far-field behaviour of $u$, just like in the planar setting (see \eqref{e:AS007} and the discussion of Section \ref{sec:farfield}). In both settings, the scattered field $u$ can be represented as a complex contour integral involving the directivities and plane waves (see \eqref{e:AS010} and \eqref{e:New12}--\eqref{e:New13} as well as \eqref{e:PWNew023raph}). We also note that the observation directions ($\theta$ on the plane and $p$ on the sphere) serve as spectral variables for integration. Finally, even though the observation direction $\theta$ in the planar setting seems to be independent of any choice of reference point, the reference point in the planar setting is, by convention, located at the origin of $\mathbb{R}^2$. So, the fact that observation directions are defined relative to a reference point is yet another similarity between the planar and the spherical setting.

Despite these similarities, there are also some fundamental differences between the spherical and planar directivity patterns. On the sphere, two directivities are present whereas there is only a single one in the plane. Moreover, depending on the value of $\lambda$, the scattered field $u$ is not always decaying in the far-field as discussed in Section \ref{sec:farfield}. In the planar case, however, $u$ always decays in the far field, as is evident from \eqref{e:AS007}. Another important difference is the fact that there exists a third plane-wave representation of $u$ that involves \textit{both} directivities \eqref{e:PWNew023raph}. No analogue of this formula exists in the planar setting.

\section{Conclusion and perspectives}
Building on the notion of spherical plane waves introduced in \cite{partAcomplexSphere}, we provided a set of tools and concepts to study and characterise scattering problems on $\S^2$. We have highlighted strong analogies between those, and the classical tools to study scattering problems on the plane. The main such concepts introduced are plane-wave representations of wave fields and directivities. Using Green's third identity we have also explained how wave-fields defined on $\S^2$ can be analytically continued to the complexified sphere $\S_{\c}^2$. This allowed us to formulate a complexified version of Green's third identity, and to make sense of the complexification of the inhomogeneous equation that usually governs the Green's function on $\S^2$. 

Everything we did in this article was done for an arbitrary scatterer, but we noted specific scatterers of interest, such as spherical caps or slits. What we hope to do next is to focus on the slit geometry and show that as in the planar case, the directivities can be thought of a solutions to some ODEs, with the aim of shedding more light on the three-dimensional problem of diffraction by a plane sector.

	\bibliographystyle{unsrt}
	\bibliography{bibliography}

\begin{thebibliography}{10}

\bibitem{Smyshlyaev1993}
V.~P. Smyshlyaev.
\newblock {T}he {H}igh-{F}requency {D}iffraction of {E}lectromagnetic {W}aves
  by {C}ones of {A}rbitrary {C}ross {S}ections.
\newblock {\em SIAM J. Appl. Math.}, 53(3):670--688, 1993.

\bibitem{shanin2005modified}
AV~Shanin.
\newblock Modified smyshlyaev’s formulae for the problem of diffraction of a
  plane wave by an ideal quarter-plane.
\newblock {\em Wave motion}, 41(1):79--93, 2005.

\bibitem{AssierPeake2012}
R.~C. Assier and N.~Peake.
\newblock On the diffraction of acoustic waves by a quarter-plane.
\newblock {\em Wave Motion}, 49(1):64--82, 2012.

\bibitem{sarigiannidis2026integral}
Iraklis Sarigiannidis, Matteo Albani, and Massimiliano Casaletti.
\newblock Integral equation numerical approach for the determination of the
  diffraction coefficients from generally shaped cones.
\newblock {\em COMPEL}, 45(3):412--430, 2026.

\bibitem{AssierPoonPeake2016}
R.~C. Assier, C.~Poon, and N.~Peake.
\newblock Spectral study of the laplace-beltrami operator arising in the
  problem of acoustic wave scattering by a quarter-plane.
\newblock {\em Q. J. Mech. Appl. Math.}, 69(3):281--317, 2016.

\bibitem{partAcomplexSphere}
A.V. Shanin, V~Kunz, and R.C. Assier.
\newblock {Plane-wave representation for the Laplace--Beltrami equation on a
  sphere. Application to the Green's function}.
\newblock {\em Accepted in Proc Roy Soc A; arXiv:2603.24417}, 2026.

\bibitem{AssierShanin2021AnalyticalCont}
R.~C. Assier and A.~V. Shanin.
\newblock Analytical continuation of two-dimensional wave fields.
\newblock {\em Proc. R. Soc. A Math. Phys. Eng. Sci.}, 477(20200681), 2021.

\bibitem{keller1999singularities}
Joseph~B Keller.
\newblock Singularities at the tip of a plane angular sector.
\newblock {\em Journal of Mathematical Physics}, 40(2):1087--1092, 1999.

\bibitem{AssierAbrahams2021}
R.~C. Assier and I.~D. Abrahams.
\newblock A surprising observation on the quarter-plane diffraction problem.
\newblock {\em SIAM J. Appl. Math.}, 81(1):60--90, 2021.

\bibitem{AssierPeake2012FarField}
R.~C. Assier and N.~Peake.
\newblock Precise description of the different far fields encountered in the
  problem of diffraction of acoustic waves by a quarter-plane.
\newblock {\em IMA J. Appl. Math.}, 77(5):605--625, 2012.

\bibitem{AssierShanin2019}
R.~C. Assier and A.~V. Shanin.
\newblock Diffraction by a quarter-plane. {A}nalytical continuation of spectral
  functions.
\newblock {\em Q. J. Mech. Appl. Math.}, 72(1):51--86, 2019.

\bibitem{AssierShanin2021VertexGreensFunctions}
R.~C. Assier and A.~V. Shanin.
\newblock Vertex {G}reen’s functions of a quarter-plane. links between the
  functional equation, additive crossing and {L}am\'e functions.
\newblock {\em Q. J. Mech. Appl. Math.}, 74(3):251--295, 2021.

\bibitem{assier2024contribution}
Rapha{\"e}l~C Assier, Andrey~V Shanin, and Andrey~I Korolkov.
\newblock {A contribution to the mathematical theory of diffraction. Part II:
  Recovering the far-field asymptotics of the quarter-plane problem}.
\newblock {\em Q. J. Mech. Appl. Math.}, 77(1-2):hbae005, 2024.

\bibitem{lyalinov2013scattering}
Mikhail~A Lyalinov.
\newblock Scattering of acoustic waves by a sector.
\newblock {\em Wave Motion}, 50(4):739--762, 2013.

\bibitem{Shanin2005b}
A.~V. Shanin.
\newblock Coordinate equations for a problem on a sphere with a cut associated
  with diffraction by an ideal quarter-plane.
\newblock {\em Quarterly Journal on Mechanics and Applied Mathematics},
  58(2):289--308, 2005.

\bibitem{shanin2012asymptotics}
AV~Shanin.
\newblock Asymptotics of waves diffracted by a cone and diffraction series on a
  sphere.
\newblock {\em Journal of Mathematical Sciences}, 185(4):644--657, 2012.

\bibitem{ValyaevShanin2012}
V.~Valyaev and A.~Shanin.
\newblock {E}mbedding formulae for {L}aplace–{B}eltrami problems on the
  sphere with a cut.
\newblock {\em Wave Motion}, 49:83--92, 2012.

\bibitem{Williams1982}
Marc~H. Williams.
\newblock Diffraction by a finite strip.
\newblock {\em QJMAM}, 35:103--124, 1982.

\bibitem{Shanin2001}
A.~V. Shanin.
\newblock {T}hree {T}heorems {C}oncerning {D}iffraction by a {S}trip or a
  {S}lit.
\newblock {\em Quart. J. Mech. Appl. Math}, 54(1):107--137, 2001.

\bibitem{Shanin2003}
A.~V. Shanin.
\newblock {D}iffraction of a {P}lane {W}ave by {T}wo {I}deal {S}trips.
\newblock {\em Quart. J. Mech. Appl. Math}, 56(2):187--215, 2003.

\bibitem{Babich2007}
V.~M. Babich, M.~A. Lyalinov, and V.~E. Grikurov.
\newblock Diffraction theory: The {S}ommerfeld-{M}alyuzhinets technique (alpha
  science series on wave phenomena).
\newblock {\em Oxford: Alpha Science}, 2007.

\bibitem{Nethercote2020ReviewPaper}
M.~A. Nethercote, R.~C. Assier, and I.~D. Abrahams.
\newblock Analytical methods for perfect wedge diffraction: a review.
\newblock {\em Wave Motion}, 93, 2020.

\bibitem{Latta1956}
G.~E. Latta.
\newblock The solution of a class of integral equations.
\newblock {\em Journal of Rational Mechanics and Analysis}, 5(5):821--834,
  1956.

\bibitem{Bateman1953}
H.~Bateman and A.~Erdelyi.
\newblock {\em Higher Transcendental Functions}, volume~1.
\newblock McGraw-Hill, New York, 1953.

\bibitem{griffiths1984principles}
Phillip~A. Griffiths and Joseph Harris.
\newblock {\em Principles of Algebraic Geometry}.
\newblock Wiley, New York, 1994.

\bibitem{Hatcher2000}
A.~Hatcher.
\newblock {\em Algebraic topology}.
\newblock Cambridge Univ. Press, 2000.

\bibitem{Borthwick2016}
David Borthwick.
\newblock {\em Spectral Theory of Infinite-Area Hyperbolic Surfaces}, volume
  318 of {\em Progress in Mathematics}.
\newblock Birkh{\"a}user, Cham, 2 edition, 2016.

\bibitem{Chossat2020}
Pascal Chossat.
\newblock The hyperbolic model for edge and texture detection in the primary
  visual cortex.
\newblock {\em The Journal of Mathematical Neuroscience}, 10(1):2, 2020.

\bibitem{Shabat1991}
B.~V. Shabat.
\newblock {\em Introduction to Complex Analysis Part II Functions of Several
  Variables}.
\newblock American Mathematical Society, Providence, Rhode Island, 1991.

\bibitem{KobayashiNomizu}
S.~Kobayashi and K.~Nomizu.
\newblock {\em Foundations of Differential Geometry}, volume I $\&$ II.
\newblock Interscience Publishers, John Wiley $\&$ Sons Inc, New York, London,
  Sydney, 1969.

\end{thebibliography}


\appendix

\section{Proof of Proposition \ref{prop:gammainfinityG}} \label{App:New01}

The purpose of this appendix is to prove that the equation  \eqref{e:NewPW23}
  is true. To achieve this, the following lemma will be useful.

\begin{lemma}\label{lem:NewA2}
Let $j\in\{1,2\}$, and let $\x,\y \in \mathcal{S}^2$ with $\x \neq \pm \y$. Then, the equality 
  \begin{equation}
    2 \int_{\gamma} w_j (\boldsymbol{x}, p ; - \boldsymbol{y})
    \Omega_{\boldsymbol{y}} = \int_{\gamma_\infty} w_j (\boldsymbol{x},
    p ; - \boldsymbol{y}) \Omega_{\boldsymbol{y}} + \int_{\hat\gamma_\infty}
    w_j (\boldsymbol{x}, p ; - \boldsymbol{y}) \Omega_{\boldsymbol{y}}
    \label{e:PWApp03}
  \end{equation}
  is true. Here $\gamma_\infty$ is defined as in Proposition \ref{prop:gammainfinityG}. The contour $\hat \gamma_\infty$ is defined similarly: it is a figure-of-eight loop on $\Xi$ encircling $\Phi_+ (\boldsymbol{y})$
  once in the negative direction, $\Phi_- (\boldsymbol{y})$ once in the positive
  direction, and with point $p_0$ common to both, $\gamma_\infty $
  and $\hat \gamma_\infty $, as shown in Figure \ref{fig:PWApp02}, right.
\end{lemma}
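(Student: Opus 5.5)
The plan is a purely one-complex-variable contour deformation on $\Xi$. I will use the coordinate $\beta$ adapted to $\y$ via \eqref{e:ParamX}, with $A_{\y}$ rotating $\xNP$ onto $\y$. By Remark \ref{rem:4.4}, $\Omega_{\y}=d\beta$, and its only poles are at $\beta=\pm i\infty$, i.e.\ at $\Phi_\pm(\y)$. The integrand $w_j(\x,p;-\y)\,d\beta$ is therefore a multivalued 1-form on $\Xi$. It can only be singular at four points:
\begin{itemize}
\item $\Phi_\pm(\x)$, where $\x\cdot\bdeta=0$;
\item $\Phi_\pm(-\y)=\Phi_\mp(\y)$, where $\y\cdot\bdeta=0$, which coincide with the poles of $d\beta$.
\end{itemize}
The assumption $\x\neq\pm\y$ makes these four points distinct. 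I will also use property 2 of Section \ref{sec:singplanewaves}, $\Phi_+(-\y)=\Phi_-(\y)$. Thus the reference-point singularities of the plane wave sit exactly at the poles of $\Omega_{\y}$.

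The first step is to determine the local behaviour at each singular point. Near $\Phi_\pm(\x)$ the form behaves like $(\x\cdot\bdeta)^{\lambda}$ or $(\x\cdot\bdeta)^{-1-\lambda}$, so these are genuine branch points with monodromy $e^{\pm 2\pi i\lambda}$. Near $\beta=\pm i\infty$ I will write $\bdeta\propto(\cos\beta,\sin\beta,i)$ in the rotated frame. Then $\y\cdot\bdeta$ is a constant ($i$, up to normalisation), while $\x\cdot\bdeta$ grows like $e^{\mp i\beta}$. Hence $w_j\sim C\,e^{\mp i\kappa\beta}$ with $\kappa=\lambda$ or $-1-\lambda$. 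In the local variable $\tau_\pm=e^{\pm i\beta}$ the form becomes $\tau^{\mu}\,d\tau/\tau$, which also branches, with the same exponent (up to sign) as at the $\x$-points.

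The second step is the decomposition itself. The contour $\gamma=\Phi(\Gamma)$ separates $\{\Phi_+(\y),\Phi_-(\x)\}$ from $\{\Phi_-(\y),\Phi_+(\x)\}$, as in Lemma \ref{lem:defgammaonXi} and Proposition \ref{lem:PWNew01}. On the Riemann sphere, $\gamma$ is a simple closed curve, and the cuts join $\Phi_+(\y)$ to $\Phi_-(\x)$ and $\Phi_+(\x)$ to $\Phi_-(\y)$. I will deform $\gamma$ in two ways, each time keeping the endpoints of a cut fixed.
\begin{itemize}
\item Pushing $\gamma$ towards the cut between $\Phi_+(\x)$ and $\Phi_-(\y)$, and then around the other pair, turns it into the figure-of-eight $\gamma_\infty$.
\item Pushing it the opposite way turns it into $\hat\gamma_\infty$ around $\Phi_\pm(\y)$.
\end{itemize}
Concretely, I will express each deformation as a banks-of-cut integral plus small circles, taking the common point $p_0$ as the base point fixing the principal branch.

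Adding the two deformations should produce $2\int_\gamma$, the factor $2$ arising because both figure-of-eights traverse $p_0$. The alternative route is to show directly that $\gamma_\infty+\hat\gamma_\infty-2\gamma$ is homologous to zero on the Riemann surface of the integrand, using the fact that the monodromies at the four points are compatible. Those monodromies are $e^{2\pi i\mu}$ at $\Phi_\pm(\x)$ and $e^{-2\pi i\mu}$ at $\Phi_\pm(\y)$, with total product $1$.

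The main obstacle is the branch bookkeeping. I have to show that the lobe contributions add correctly, that the branch taken on each lobe is consistent with the principal branch \eqref{eq:planewavesbranch} at $p_0$, and that the small circles around the four singular points contribute nothing, or cancel. For the small circles, I will first assume $\operatorname{Re}\mu$ lies in a strip where the circle integrals vanish as the radii shrink. I will then extend to all admissible $\lambda$ by analytic continuation in $\lambda$, since both sides of \eqref{e:PWApp03} are analytic in $\lambda$ for these fixed compact contours.
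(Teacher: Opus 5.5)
Your local analysis is fine. So is the device of proving the identity in the strip $-1<\mathrm{Re}\,\lambda<0$, where for both $j$ all four small circles vanish, and then continuing analytically in $\lambda$.

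The gap is the decomposition step. Deforming one copy of $\gamma$ into $\gamma_\infty$ and another into $\hat\gamma_\infty$ would give $\int_\gamma=\int_{\gamma_\infty}=\int_{\hat\gamma_\infty}$, and this is false. Any deformation that carries the integrand along by analytic continuation, whether or not it crosses cuts, is a homotopy in $\Xi$ minus the four points. It therefore preserves the homology class there. But $\gamma$ is the sum of positive loops about $\Phi_-(\x)$ and $\Phi_+(\y)$, whereas $\gamma_\infty$ is a positive loop about $\Phi_-(\x)$ plus a negative one about $\Phi_+(\x)$.

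Concretely, take $j=1$ and $\lambda\to1$. The integrand tends to a rational 1-form whose only poles are at $\Phi_\pm(\x)$, with residue $r$ at $\Phi_-(\x)$ and $-r$ at $\Phi_+(\x)$, where $r\neq0$ generically. Then $\int_\gamma=2\pi i r$, $\int_{\gamma_\infty}=4\pi i r$ and $\int_{\hat\gamma_\infty}=0$. By continuity in $\lambda$ the mismatch persists for nearby non-integer $\lambda$. So the identity holds only for the sum. The factor $2$ counts two copies of $\gamma$; it has nothing to do with the two passages through $p_0$.

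The missing idea is the regrouping carried out in the paper's proof.
\begin{enumerate}
\item Shrink one copy of $\gamma$ onto the cut from $\Phi_+(\y)$ to $\Phi_-(\x)$, and the other onto the cut from $\Phi_+(\x)$ to $\Phi_-(\y)$. This is Cauchy's theorem on $\Xi$ minus the cuts, where the integrand is single-valued.
\item Join the two copies at a point $p_0$ between the cuts, giving a single figure-of-eight.
\item Push the portions of each lobe that run along its cut across the cut. This splits the lobe into a piece attached to each endpoint.
\item Pair the two $\Phi_\pm(\x)$-pieces into $\gamma_\infty$ and the two $\Phi_\pm(\y)$-pieces into $\hat\gamma_\infty$.
\end{enumerate}

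In your strip this becomes explicit. Let $J_s$ be the integral from $p_0$ to the branch point $s$ on the principal branch, and fix the sheets consistently (this is the bookkeeping you flagged). Then each of the four closed contours equals $1-e^{2\pi i\kappa}$ times a difference of two $J_s$, with $\kappa=\lambda$ or $-1-\lambda$. The lemma then reduces to the tautology
\[
\bigl(J_{\Phi_-(\x)}-J_{\Phi_+(\y)}\bigr)+\bigl(J_{\Phi_-(\y)}-J_{\Phi_+(\x)}\bigr)=\bigl(J_{\Phi_-(\x)}-J_{\Phi_+(\x)}\bigr)+\bigl(J_{\Phi_-(\y)}-J_{\Phi_+(\y)}\bigr).
\]

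Your fallback via homology on the Riemann surface could also be made to work. However, the fact that the four monodromies multiply to $1$ is automatic on the sphere and does not by itself yield this relation; you would still have to compute the classes.

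Finally, this regrouping makes $\hat\gamma_\infty$ wind positively about $\Phi_+(\y)$ and negatively about $\Phi_-(\y)$, as $\gamma$ does. That is the reverse of the orientation written in the statement. To see that the printed orientation cannot be right, take $j=2$ and $\lambda\to1$. The only poles are then at $\Phi_\pm(\y)$, with residue $r'$ at $\Phi_+(\y)$ and $-r'$ at $\Phi_-(\y)$. With the printed orientation the identity would read $4\pi i r'=-4\pi i r'$.
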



\begin{proof}
  This lemma follows from contour deformation on $\Xi$, as illustrated in
  Figure \ref{fig:PWApp02}.
  
  \textbf{Step 1.} Place the two copies of $\gamma$ on $\Xi$.
  
  \textbf{Step 2.} We choose illustration on a stereographic projection of
  $\Xi$. On the left, the two loops have been dragged on $\Xi$ such that they
  encircle the branch cuts closely on one side, and such that they are joined
  at the point $p_0$. Thereafter, the two copies of $\gamma$ have been
  concatenated at $p_0$ to form the figure-of-eight loop $\gamma_8$ shown on
  the right.
  
  \textbf{Step 3.} We drag the segments of contours that lie close to the
  branch cuts through the cuts, as indicated by the black arrows in Step 2,
  right. After concatenating the parts that have been dragged through the
  branch cuts akin to how we concatenated $\gamma + \gamma$ to form
  $\gamma_\infty$, we will have reached the configuration $\gamma_\infty
  + \hat\gamma_\infty$. This gives Equation \eqref{e:PWApp03}.
\end{proof}

\begin{figure}[h!]
    \centering
    \includegraphics[width=\linewidth]{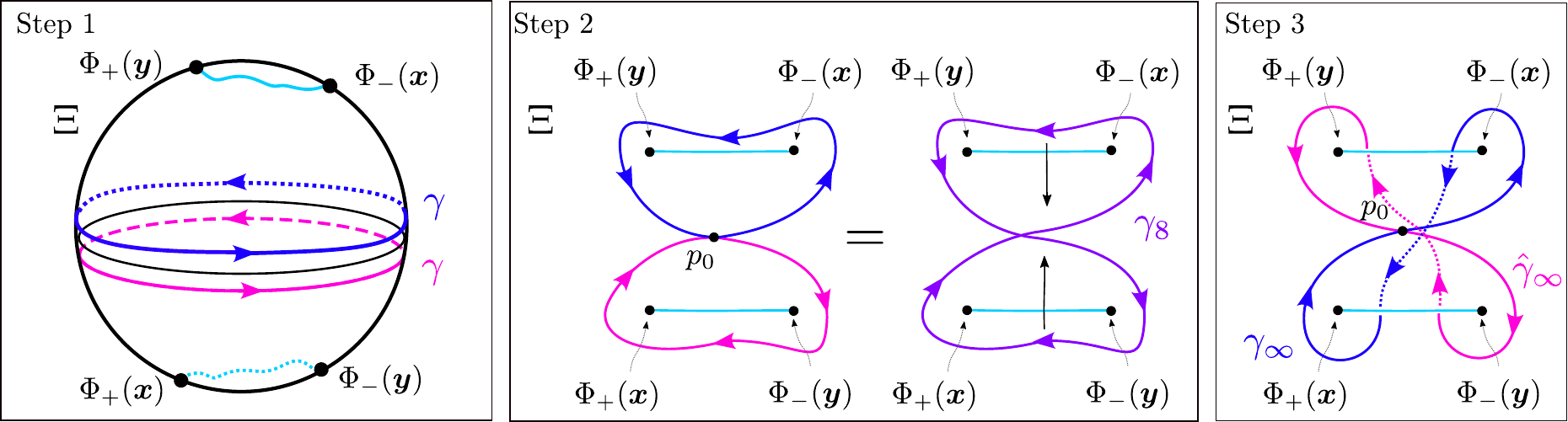}
    \caption{\small Steps used in the proof of Lemma \ref{lem:NewA2}}
    \label{fig:PWApp02}
\end{figure}

Now, within \eqref{e:PWApp03}, consider the integral
    $
    \int_{\hat \gamma_\infty } w_j (\boldsymbol{x}, p; - \boldsymbol{y}) \Omega_{\boldsymbol{y}}.
    $
We denote by $\Upsilon  : \Xi \rightarrow \Xi$ the M\"obius transformation uniquely determined by requiring  that it
maps $\Phi_{\mp} (\boldsymbol{y}) \mapsto \Phi_{\pm} (\boldsymbol{x})$ and
$\Phi_{\pm} (\boldsymbol{x}) \mapsto \Phi_{\mp} (\boldsymbol{y})$, similar to what was done 
in
\citep[Appendix D]{partAcomplexSphere}. 
As in \citep{partAcomplexSphere}, we have
\begin{equation}
  \Upsilon^{\star}  (w_j (\boldsymbol{x}, p ; - \boldsymbol{y})
  \Omega_{\boldsymbol{y}}) = w_j (\boldsymbol{y}, p ; - \boldsymbol{x})
  \Omega_{\boldsymbol{x}}
\end{equation}
and it can be checked that $\Upsilon (\hat \gamma_\infty ) =
\gamma_\infty $. 
Thus, akin to the computations carried out in \citep[Appendix D]{partAcomplexSphere},  we find that the following Lemma is
true.

\begin{lemma}
  Let $j\in\{1,2\}$, and let $\x,\y \in \mathcal{S}^2$ with $\x \neq \pm \y$. Then, the equality
  \begin{equation}
    2 \int_{\gamma} w_j (\boldsymbol{x}, p ; - \boldsymbol{y})
    \Omega_{\boldsymbol{y}} = \int_{\gamma_\infty } w_j (\boldsymbol{x},
    p ; - \boldsymbol{y}) \Omega_{\boldsymbol{y}} + \int_{\gamma_\infty }
    w_j (\boldsymbol{y}, p ; - \boldsymbol{x}) \Omega_{\boldsymbol{x}}
    \label{e:PWApp03new}
  \end{equation}
  is true.
  \label{lem:lemA2}
\end{lemma}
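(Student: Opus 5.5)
The plan is to start from Lemma \ref{lem:NewA2}, which already gives
\[
2\int_{\gamma} w_j(\x,p;-\y)\,\Omega_{\y} = \int_{\gamma_\infty} w_j(\x,p;-\y)\,\Omega_{\y} + \int_{\hat\gamma_\infty} w_j(\x,p;-\y)\,\Omega_{\y}.
\]
The only remaining task is to rewrite the second integral on the right as $\int_{\gamma_\infty} w_j(\y,p;-\x)\,\Omega_{\x}$. This is a change of variables $p\mapsto \Upsilon(p)$ on $\Xi$, using the M\"obius map $\Upsilon$ introduced just before the statement.

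First, I would make $\Upsilon$ concrete. Since $\Xi\cong\widehat{\mathbb{C}}$, $\Upsilon$ is the unique M\"obius transformation prescribed by the four-point data $\Phi_\mp(\y)\mapsto\Phi_\pm(\x)$ and $\Phi_\pm(\x)\mapsto\Phi_\mp(\y)$. This data is consistent precisely because $\Phi_\pm(\x)$ and $\Phi_\mp(\y)$ are distinct, i.e.\ $\x\neq\pm\y$, so $\Upsilon$ is an involution swapping two pairs. I will check consistency of the cross-ratio using the explicit formulae for $\Phi_\pm$ from \citep{partAcomplexSphere}. The natural expectation is that $\Upsilon$ is induced by the linear reflection of $\mathbb{C}^3$ exchanging $\x$ and $\y$, namely the rotation by $\pi$ about $\x+\y$ composed appropriately, acting on null vectors $\bdeta$. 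I will try this first, since then $\Upsilon$ comes from an orthogonal map $R$ with $R\x=\y$, $R\y=\x$.

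Next comes the pullback identity $\Upsilon^\star(w_j(\x,p;-\y)\Omega_{\y}) = w_j(\y,p;-\x)\Omega_{\x}$, quoted from \citep[Appendix D]{partAcomplexSphere}. If $\Upsilon$ is induced by $R$, the ratio $\x\cdot\bdeta/(-\y\cdot\bdeta)$ transforms to $\x\cdot R\bdeta/(-\y\cdot R\bdeta) = \y\cdot\bdeta/(-\x\cdot\bdeta)$, which is exactly the form needed. Moreover, $\Upsilon^\star\Omega_{\y}$ is a meromorphic $1$-form with simple poles at $\Upsilon^{-1}(\Phi_\pm(\y)) = \Phi_\mp(\x)$. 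By uniqueness of forms with prescribed residues (\citep[Appendix D]{partAcomplexSphere}), it equals $\pm\Omega_{\x}$, and the sign is fixed by comparing residues.

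Then I verify $\Upsilon(\hat\gamma_\infty)=\gamma_\infty$, including orientation. The loop $\hat\gamma_\infty$ encircles $\Phi_+(\y)$ negatively and $\Phi_-(\y)$ positively. An orientation-preserving M\"obius map sends these to loops around $\Phi_-(\x)$ and $\Phi_+(\x)$ with the same orientations. That is a figure-of-eight going positively around $\Phi_-(\x)$ and negatively around $\Phi_+(\x)$, which is the definition of $\gamma_\infty$. Up to homotopy avoiding the branch points, this suffices.

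The main obstacle is branch bookkeeping. The integrands branch around the encircled points, so I must check that the principal branch at $p_0$ on $\hat\gamma_\infty$ is carried to the principal branch on $\gamma_\infty$ at $\Upsilon(p_0)$. I will handle this by evaluating both sides at a point where the plane-wave arguments are explicit and comparing with the normalisation \eqref{eq:planewavesbranch}. The pole-residue sign check for $\Upsilon^\star\Omega_{\y}$ needs the same care.

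Finally, I apply the change of variables: $\int_{\hat\gamma_\infty}\Upsilon^\star(\cdot) = \int_{\Upsilon(\hat\gamma_\infty)}(\cdot)$ read in the inverse direction, using that $\Upsilon$ is an involution. Substituting into Lemma \ref{lem:NewA2} yields \eqref{e:PWApp03new}.
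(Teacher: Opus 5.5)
Your route is the paper's: start from Lemma~\ref{lem:NewA2} and transport the $\hat\gamma_\infty$ integral by the involution $\Upsilon$. The gap is in the orientation step, and it is not cosmetic.

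With your data $\Phi_+(\y)\mapsto\Phi_-(\x)$ and $\Phi_-(\y)\mapsto\Phi_+(\x)$, the lobes keep their orientations. So $\Upsilon(\hat\gamma_\infty)$ goes \emph{negatively} around $\Phi_-(\x)$ and \emph{positively} around $\Phi_+(\x)$. That is $\gamma_\infty$ with its orientation reversed, not $\gamma_\infty$. Your preceding sentence ("with the same orientations") already implies this; the conclusion you draw from it is flipped.

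At the same time, $\mathrm{Res}_q\,\Upsilon^\star\omega=\mathrm{Res}_{\Upsilon(q)}\,\omega$ for a biholomorphism. Hence $\Upsilon^\star\Omega_{\y}$ has residue $-i$ at $\Phi_-(\x)$ and $+i$ at $\Phi_+(\x)$, so $\Upsilon^\star\Omega_{\y}=-\Omega_{\x}$.

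These two minus signs cancel, and the lemma is true. But suppose you keep $\Upsilon(\hat\gamma_\infty)=\gamma_\infty$ and then do the residue comparison correctly, as you plan to. You then get $\int_{\hat\gamma_\infty}w_j(\x,p;-\y)\,\Omega_{\y}=-\int_{\gamma_\infty}w_j(\y,p;-\x)\,\Omega_{\x}$, which is the wrong sign. The branch bookkeeping cannot repair this. Changing branch only multiplies by factors $e^{2\pi i k\lambda}$, $k\in\mathbb{Z}$, and these are not $-1$ for generic $\lambda$.

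Relatedly, the rotation by $\pi$ about $\x+\y$ does not realise your four-point data. It lies in $\mathrm{SO}(3)$, so by continuity of $\Phi_\pm$ and connectedness it sends $\Phi_\pm(\y)$ to $\Phi_\pm(\x)$, i.e.\ it is label-preserving. The label-swapping involution you wrote down is induced by the reflection in the plane bisecting $\x$ and $\y$, which has determinant $-1$. Equivalently, it is that rotation composed with the reflection in the plane spanned by $\x$ and $\y$.

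Both matrices are symmetric, orthogonal and exchange $\x$ and $\y$, so your computation of the plane-wave ratio is valid for either. The difference is in the signs:
\begin{itemize}
\item With the rotation-induced $\Upsilon$, you get $\Upsilon^\star\Omega_{\y}=+\Omega_{\x}$ and $\Upsilon(\hat\gamma_\infty)=\gamma_\infty$ verbatim.
\item With the reflection-induced $\Upsilon$, both identities acquire a minus sign.
\end{itemize}
The paper states both identities without sign for the label-swapping map. They are literally true only for the label-preserving one, but its conclusion is unaffected because the two signs cancel. Pick one of the two maps and track both signs consistently. The rest of your plan, including checking the branch normalisation \eqref{eq:planewavesbranch} at $p_0$, then goes through.
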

We will need two more results. First \citep[Eq. (D.13)]{partAcomplexSphere}:
\begin{align}
w_2(\y,p; -\x) &= e^{- i \pi (\lambda + 1)/2} w_2(\y,p;-\x^{\mathrm{ref}}) w_1(\x,p; \x^{\mathrm{ref}}) \frac{\x\cdot\bdeta}{\x^{\rm ref}\cdot\bdeta},
\label{eq:D13fromPartA}
\end{align}
valid for any $\x$, $\y$ and $\x^{\rm ref}$ in $\S^2$, where $\bdeta$ is such that $[\bdeta]=p$ and $\bdeta\cdot\x^{\rm ref}\neq0$,  i.e.\ $p \neq \Phi_{\pm}(\x^{\mathrm{ref}})$. Second we will need \citep[ Lemma D.3]{partAcomplexSphere}, stating that for any $\x$ and $\y$ in $\S^2$, we have
\begin{equation}
    \left( \frac{\x \cdot \boldsymbol{\eta}}{\y \cdot \boldsymbol{\eta}} \right) \Omega_{\boldsymbol{x}} =
    \Omega_{\boldsymbol{y}}, \label{e:New09}
  \end{equation}
  provided that $\y\cdot\bdeta\neq0$, i.e.\ $p \neq \Phi_{\pm}(\y)$
Let us start by giving a simple expression for $G$. Letting $\x^{\rm ref}\to\y$ in \eqref{e:New01}, and using the normalisation \eqref{eq:planewavesbranch}, we directly find that
\begin{equation}
 G (\boldsymbol{x} ; \boldsymbol{y}) = \mathcal{B}_{\lambda} e^{i \pi
    (\lambda + 1) / 2} \int_{\gamma} w_2
  (\boldsymbol{x}, p ; - \boldsymbol{y}) \Omega_{\boldsymbol{y}}.
  \label{eq:raphappA9}
  \end{equation}
  Therefore, using Lemma \ref{lem:lemA2} for $j=2$, we obtain directly 
\begin{equation}
G (\boldsymbol{x} ; \boldsymbol{y})=\frac{\mathcal{B}_{\lambda}}{2} \left ( \int_{\gamma_\infty } e^{i \pi
    (\lambda + 1) / 2} w_2 (\boldsymbol{x},
    p ; - \boldsymbol{y}) \Omega_{\boldsymbol{y}} + \int_{\gamma_\infty }
    e^{i \pi
    (\lambda + 1) / 2} w_2 (\boldsymbol{y}, p ; - \boldsymbol{x}) \Omega_{\boldsymbol{x}}\right).
    \label{eq:raph-inter-fast}
\end{equation}
Multiplying  \eqref{eq:D13fromPartA} by  $e^{i \pi
    (\lambda + 1) / 2}\Omega_{\x}$ and using \eqref{e:New09}, we get
    \begin{equation}
        e^{i \pi
    (\lambda + 1) / 2} w_2 (\boldsymbol{y}, p ; - \boldsymbol{x}) \Omega_{\boldsymbol{x}}= w_2(\y,p;-\x^{\mathrm{ref}}) w_1(\x,p; \x^{\mathrm{ref}})\Omega_{\boldsymbol{x}^{\rm ref}}
    \label{eq:fast-step1}
    \end{equation}
Now swapping $\x$ and $\y$ in \eqref{eq:fast-step1}, we get
    \begin{equation}
        e^{i \pi
    (\lambda + 1) / 2} w_2 (\boldsymbol{x},
    p ; - \boldsymbol{y}) \Omega_{\boldsymbol{y}}=w_2(\x,p;-\x^{\mathrm{ref}}) w_1(\y,p; \x^{\mathrm{ref}})\Omega_{\boldsymbol{x}^{\rm ref}}.
    \label{eq:fast-step2}
    \end{equation}
Using \eqref{eq:fast-step1} and \eqref{eq:fast-step2} within \eqref{eq:raph-inter-fast}, and using the fact that 
\begin{align}
	w_2(\y,p;-\x^{\rm ref})w_1(\x,p;\x^{\rm ref})\Omega_{\x^{\rm ref}} =  - w_2(\y,p;\x^{\rm ref})w_1(\x,p;- \x^{ \rm ref})\Omega_{\x^{\rm ref}}, 
\end{align}
which follows from \eqref{eq:D13fromPartA} upon substituting $\x^{\rm ref} \to -\x^{\rm ref}$,   
directly gives the sought-after representation \eqref{e:NewPW23}, which concludes the proof. \hfill $\Box$ 
\begin{remark}
    We could instead obtain an expression akin to \eqref{eq:raphappA9} by starting from \eqref{e:New02}, and then use $j=1$ in Lemma \ref{lem:lemA2}. This would lead to the same result.
\end{remark}

\section{On a differential form formalism}\label{App:Diff}

Throughout this appendix, we will recast our results in terms of the differential form formalism. For an introduction to the corresponding mathematical theory, see \citep{Shabat1991} or \citep{KobayashiNomizu}. We will, in particular, highlight similarities between planar and spherical wave scattering. 

\subsection{Scattering on the plane}\label{sec:PlaneDiff}

Let us now focus on wave scattering on the sphere $\S^2$. This section is focused on the planar setting. Consider Green's third identity \eqref{e:AS002}. As was discussed in detail in \citep[Appendix B]{AssierShanin2021AnalyticalCont}, \eqref{e:AS002} can be rewritten by using  Cartan's  differential form formalism. To that end, let us denote by $\star$ the Hodge-star operator, and let $d$ denote the exterior derivative, so that for any smooth function $v$, we have
\begin{equation}
\star \! d v =  \ptl_{x_2} v \, dx_1 - \ptl_{x_1} v \, dx_2, ~ \text{ and } ~ (-\star d \star d) v = \Delta v. 
\label{e:AS004}
\end{equation}
Using the definitions of $\star$, $d$, $\partial_{\n}$ and $d\ell$, it is a straightforward computation to verify the following identity. For any two smooth functions $u$ and $v$, we have
\begin{align}
    \left[ 
 v(\y) \ptl_{\n} u(\y)
-
u (\y) \ptl_{\n} v(\y)
\right] d\ell = v(\y) \left(\star du(\y)\right) - u(\y) \left(\star d v(\y)\right).\label{e:AS?}
\end{align}
In particular, letting $v(\y) = G(\x;\y)$  denote the planar Green's function given in \eqref{e:Greal}, we find that Green's third identity becomes rewritten as
\begin{equation}
u(\x)  = \int_{\tilde{\Gamma}} \left[G(\x;\y) \left(\star du(\y)\right) - u(\y) \left(\star d G(\x;\y)\right)\right]
\label{e:AS006}
\end{equation}
where the operator $\star d$ acts on the $\y$ component. As was also shown in \citep{AssierShanin2021AnalyticalCont}, the form $v(\y) \left(\star du(\y)\right) - u(\y) \left(\star d v(\y)\right)$ is \textit{closed} whenever $u$ and $v$ solve the Helmholtz equation on the plane. So, one may freely deform the contour of integration in \eqref{e:AS006}. Moreover, as detailed in \citep{AssierShanin2021AnalyticalCont} we may \textit{complexify} Green's third identity \eqref{e:AS006} using complex differential forms, and the same strategy works on the sphere, as we now discuss.  

\subsection{Scattering on the sphere}\label{app:diffSph}
Consider, again, the Hodge star operator $\star$ which is now defined relative to the metric $g = \text{diag}(1,\sin^2(\theta))$ (as opposed to the Euclidean $g=\text{diag}(1,1)$ used within the previous Section \ref{sec:PlaneDiff}) and the exterior derivative $d$, which does not depend on the metric. We find that 
	\begin{align}
		 \star d f = \sin(\theta) \partial_{\theta} f d\varphi - \frac{1}{\sin(\theta)}\partial_{\varphi} f d\theta, \quad \text{ and } \quad  (- \star d \star d) f=\tilde{\Delta} f,
	\end{align}
and the left-hand-sides of these equations are defined independently of the coordinate system. Now, complexify the metric $g$ 
to 
the `metric' $g_{\c}$ which, in local \textit{complexified} spherical coordinates, is given by $g_{\c} = \text{diag}(1,\sin^2(\theta))$. Using $g_{\c}$, we can define the corresponding \textit{complexified} Hodge-star operator $\star_{\c}$ taking a $(p,0)$-form to a $(2-p,0)$-form. Let $\partial$ denote the \textit{holomorphic} exterior derivative, such that $d= \partial + \bar{\partial}$ for the Dolbeault operator $\bar\partial$. We obtain 
	\begin{align}
		 \star_{\c} \partial f  =  \sin(\theta) \partial_{\theta} f d\varphi - \frac{1}{\sin(\theta)}\partial_{\varphi} f d\theta ~ \text{ and } ~ -(\star_{\c} \partial \star_{\c} \partial) f = \tilde{\Delta}_{\c} f \label{AppA.2}
	\end{align} 
where $\theta$ and $\varphi$ denote complexified spherical coordinates and $\tilde{\Delta}_{\c} f$ denotes the \textit{complexified} Laplace-Beltrami operator.

We can now  prove the following result, required for the proof of Lemma \ref{lem:GreensComplex}.
	\begin{lemma}\label{Lem:Greens02}
		Consider two functions $f$ and $g$, which are holomorphic and satisfy the CLBE \eqref{e:f007} in  an open domain $U \subset \S^2_{\c}$. The holomorphic $(1,0)$-form 
            \begin{align}
                \G[g,f]=g(\star_{\c}\partial f) - f (\star_{\c}\partial g)  \label{e:A.5}
            \end{align}
        is closed in $U$.
	\end{lemma}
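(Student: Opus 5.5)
We need to show that $\partial\G[g,f]=0$ in $U$. I will compute the holomorphic exterior derivative directly using the Leibniz rule:
\begin{align*}
\partial \G[g,f] &= \partial g\wedge \star_{\c}\partial f + g\,\partial\star_{\c}\partial f - \partial f\wedge\star_{\c}\partial g - f\,\partial\star_{\c}\partial g .
\end{align*}
The argument then has two ingredients, which I treat in turn.

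The first ingredient is the symmetry of the cross terms. For holomorphic $(1,0)$-forms $\alpha,\beta$ on the two-dimensional complex manifold $\S^2_{\c}$, I claim $\alpha\wedge\star_{\c}\beta=\beta\wedge\star_{\c}\alpha$. This is the usual $g_{\c}(\alpha,\beta)\,\mathrm{vol}_{\c}$ identity, and it holds because the complexified metric is symmetric (though not Hermitian). Rather than invoke abstract facts, I will check it in local complexified spherical coordinates using the explicit formula \eqref{AppA.2}. Writing $\partial f = f_\theta d\theta + f_\varphi d\varphi$, one finds
\[
\partial g\wedge\star_{\c}\partial f = \Big(\sin\theta\, g_\theta f_\theta + \tfrac{1}{\sin\theta} g_\varphi f_\varphi\Big)\, d\theta\wedge d\varphi .
\]
This expression is manifestly symmetric in $f,g$, so the two cross terms cancel.

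The second ingredient is the equation itself. By \eqref{AppA.2}, $-\star_{\c}\partial\star_{\c}\partial f=\tilde\Delta_{\c} f$. Applying $\star_{\c}$ to a $(2,0)$-form, with $\star_{\c}(d\theta\wedge d\varphi)=1/\sin\theta$ and an inverse with the appropriate sign in dimension two, gives
\[
\partial\star_{\c}\partial f = -(\tilde\Delta_{\c} f)\sin\theta\, d\theta\wedge d\varphi ,
\]
which can also be verified directly: $\partial(\sin\theta f_\theta d\varphi - \sin^{-1}\theta f_\varphi d\theta)=(\partial_\theta(\sin\theta f_\theta)+\sin^{-1}\theta f_{\varphi\varphi})\,d\theta\wedge d\varphi=\sin\theta\,\tilde\Delta_{\c} f\, d\theta\wedge d\varphi$, the overall sign being irrelevant. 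Using the CLBE \eqref{e:f007}, $\tilde\Delta_{\c}f=-\lambda(\lambda+1)f$ and likewise for $g$. The remaining terms are then
\[
\big(g\cdot f - f\cdot g\big)\,\lambda(\lambda+1)\sin\theta\, d\theta\wedge d\varphi = 0 .
\]
Since $\G$ is holomorphic, $\bar\partial\G=0$ as well, so $d\G=0$.

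The main obstacle is that the coordinates $(\theta,\varphi)$ fail near $(0,0,\pm1)$ and where $\sin\theta=0$. I will handle this by observing that closedness is a local, coordinate-independent statement and that \eqref{AppA.2} defines $\star_{\c}\partial$ intrinsically. Near the bad points I will use the rotated chart (interchanging $z_2,z_3$); since the complexified metric is $\mathrm{SO}(3,\mathbb{C})$-invariant, the same computation applies verbatim there. Alternatively, the identity $\partial\G=0$ holds on the dense set where the chart is valid, and it extends to all of $U$ by continuity of the holomorphic coefficients.
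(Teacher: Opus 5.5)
Your proof is correct and follows the paper's route: $\bar\partial\G=0$ by holomorphy, and $\partial\G$ vanishes because the cross terms $\partial g\wedge\star_{\c}\partial f$ and $\partial f\wedge\star_{\c}\partial g$ cancel, while the remaining terms reduce via the CLBE to a multiple of $g\tilde\Delta_{\c}f-f\tilde\Delta_{\c}g=0$; this is exactly the one-line identity \eqref{eq.AppendixGreens01} in the paper. Your explicit coordinate check of the cross-term symmetry and your density argument for the chart singularities make explicit what the paper leaves implicit, and, as you note, the sign mismatch between your two computations of $\partial\star_{\c}\partial f$ (which comes from the sign in \eqref{AppA.2}) is harmless.
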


\begin{remark}
	To use the preceding Lemma for the proof of Lemma \ref{lem:GreensComplex}, we choose $g=G$ and $f=u$. 
\end{remark}

 
 \begin{proof}[Proof of Lemma \ref{Lem:Greens02}]
 	 By analyticity of $u$ and $g$, we find $\bar{\partial}(g(\star_{\c}\partial f) - f (\star_{\c}\partial g)) =0$, so $d(g(\star_{\c}\partial f) - f (\star_{\c}\partial g)) =  \partial (g(\star_{\c}\partial f) - f (\star_{\c}\partial g))$. But 
 	 	\begin{align}
 	 		\partial (g (\star_{\c}\partial f) - f (\star_{\c}\partial g)) = \star_{\c}\left((\tilde{\Delta}_{\c}g)f - (\tilde{\Delta}_{\c} f) g\right) =0, \label{eq.AppendixGreens01}
 	 	\end{align}
 	 where we have used $\tilde{\Delta}_{\c} f = - \lambda(\lambda +1)f $ and $\tilde{\Delta}_{\c} g = - \lambda(\lambda +1)g$. Consequently, 
     \begin{align}
     d(g(\star_{\c}\partial f) - f (\star_{\c}\partial g)) =0,
     \end{align} 
     which concludes the proof of the Lemma.
 \end{proof}


We may thus re-write Green's third identity on  $\mathcal{S}^2$, as
            \begin{align}
                u(\x) = \int_{\tilde{\Gamma}}\left[G(\x;\y)(\star d u(\y)) - u(\y)(\star d G(\x;\y))\right] \label{e:A.9}
            \end{align}
which matches the corresponding planar formula \eqref{e:AS006}, 

 The identity \eqref{e:A.9} and its complexified analogue are also valid more generally on other analytic surfaces, like hyperbolic space, wheare the Hodge-star operator takes the negative curvature into account. In hyperbolic space, the Green's function is expressed through the Legendre function of the \textit{second} kind \citep{Borthwick2016}. 

\end{document}